\documentclass[11pt]{amsart}
\usepackage[a4paper,margin=25mm]{geometry}
\usepackage{amsmath,amssymb,amsthm,mathtools,mathrsfs}
\usepackage{booktabs,longtable,array,enumitem}
\usepackage{needspace,placeins}
\usepackage[T1]{fontenc}
\usepackage{lmodern,microtype}
\usepackage{xcolor}
\usepackage{tikz}
\usetikzlibrary{arrows.meta,positioning,calc}
\usepackage[colorlinks=true,linkcolor=blue!45!black,citecolor=blue!45!black,urlcolor=blue!45!black]{hyperref}
\usepackage{orcidlink}
\hypersetup{pdftitle={Crossed modularity of twisted VOA modules},pdfauthor={Shun Xu}}
\AtBeginDocument{\setlength{\abovedisplayskip}{8pt plus 2pt minus 3pt}\setlength{\belowdisplayskip}{8pt plus 2pt minus 3pt}\setlength{\abovedisplayshortskip}{4pt plus 2pt minus 2pt}\setlength{\belowdisplayshortskip}{6pt plus 2pt minus 2pt}}
\setlist[enumerate]{label=\textup{(\roman*)},leftmargin=*,itemsep=3pt}
\setlist[itemize]{leftmargin=*,itemsep=3pt}
\newcolumntype{P}[1]{>{\raggedright\arraybackslash}p{#1}}
\numberwithin{equation}{section}
\newtheorem{theorem}{Theorem}[section]
\newtheorem{proposition}[theorem]{Proposition}
\newtheorem{lemma}[theorem]{Lemma}
\newtheorem{corollary}[theorem]{Corollary}
\theoremstyle{definition}
\newtheorem{definition}[theorem]{Definition}
\theoremstyle{remark}
\newtheorem{remark}[theorem]{Remark}
\newcommand{\C}{\mathbb C}

\newcommand{\cC}{\mathcal C}

\newcommand{\bo}{\boxtimes}
\newcommand{\wt}{\operatorname{wt}}

\newcommand{\Hom}{\operatorname{Hom}}

\newcommand{\Irr}{\operatorname{Irr}}

\newcommand{\Aut}{\operatorname{Aut}}

\newcommand{\id}{\operatorname{id}}

\newcommand{\cB}{\mathcal B}
\newcommand{\FPdim}{\operatorname{FPdim}}
\newcommand{\Rep}{\operatorname{Rep}}
\newcommand{\ev}{\operatorname{ev}}
\newcommand{\coev}{\operatorname{coev}}
\title[Crossed modularity of twisted VOA modules]{Crossed modularity of twisted VOA modules}
\author[Shun Xu]{Shun Xu~\orcidlink{0009-0006-8080-8107}}
\address{School of Mathematical Sciences, Anhui University, Hefei 230601, Anhui, China}
\email{shunxu@ahu.edu.cn}
\date{}
\subjclass[2020]{17B69, 18M15, 18M20}
\keywords{Vertex operator algebra; twisted module; crossed ribbon category; modular tensor category; equivariantization}

\begin{document}
\begin{abstract}
Let $V$ be a strongly rational vertex operator algebra and
$G\leq\Aut(V)$ a finite group. Using the $G$-crossed braided fusion
structure of twisted modules constructed in \cite{Crossed}, we prove
that the conformal twist $e^{2\pi iL(0)}$ gives a crossed ribbon
structure and that the crossed category is modular in Kirillov's sense.
Consequently, its equivariantization is a modular tensor category.
We also obtain dimension and simple-object counting formulas for
the twisted sectors.
\end{abstract}
\maketitle

\section{Introduction}
Let $V$ be a strongly rational vertex operator algebra and
$G\leq\Aut(V)$ a finite group.  For each $g\in G$, let $\cC_g$ denote
the category of grading-restricted $g$-twisted $V$-modules, and put
$\cC_G=\bigoplus_{g\in G}\cC_g$, with the elements of $G$ retained as
sector labels.  Orbifold theory suggests that tensor products of
twisted modules should respect multiplication of these labels, while
exchanging a $g$-twisted field with another field should involve the
action of $g$.  The appropriate categorical framework is therefore a
$G$-crossed, rather than an ordinary braided, tensor category.

The representation theory underlying this expectation was developed
by Dong, Li and Mason, who established fundamental results on
finite-order twisted modules and twisted Zhu algebras
\cite{DLM}; their modular-invariance theorem concerns trace functions
in orbifold theory under its stated finiteness and rationality
hypotheses \cite{DLMModular}.  For untwisted modules, the tensor-product
theory of Huang and Lepowsky realizes products through intertwining
operators and their analytic continuation \cite{HL}.  Huang's
differential-equation results provide convergence and extension tools
for this theory \cite{HuangDE}, and his rigidity and modularity theorem
gives a modular tensor category of ordinary modules under the
standard semisimplicity, self-duality and cofiniteness assumptions
\cite{Huang}.  The crossed tensor structure on all twisted sectors is the
additional categorical structure needed for the problem considered
here.

Kirillov developed the theory of $G$-equivariant fusion categories,
with twisted modules as a motivating example, and introduced the
extended Verlinde algebra and its $S$-operator
\cite[Example~5.5 and Sections~7--10]{Kirillov}.  In this setting
modularity means invertibility of the extended $S$-operator; it is
stronger than requiring only the neutral component to be modular.
The corresponding VOA problem was formulated by Huang in
\cite[Conjecture~6.2]{HuangProblems}: for a finite automorphism group
and a vertex operator algebra satisfying the specified
self-contragredience, complete-reducibility and $C_2$-cofiniteness
conditions, the twisted sectors should form a $G$-crossed tensor
category.  Huang's preceding Conjecture~6.1 separately addresses
associativity, commutativity and modular invariance of twisted
intertwining operators.  The modularity question considered here is the invertibility of
Kirillov's extended $S$-operator.

An approach through the fixed-point algebra $V^G$ gives important
instances of the predicted crossed structure, subject to additional
hypotheses.  Carnahan and Miyamoto proved regularity of fixed points
of a finite-order automorphism of a simple, nonnegatively graded,
regular VOA with a nondegenerate invariant bilinear form, and thereby
obtained the corresponding result for finite solvable groups
\cite{CarnahanMiyamoto}.  McRae constructed a braided $G$-crossed
category of twisted modules and identified its equivariantization
with an ambient tensor category of $V^G$-modules, assuming the
requisite vertex tensor category and algebra-object setting
\cite{McRae}.  These constructions rely on the additional vertex-tensor-category
and algebra-object hypotheses described above.

Du and Huang subsequently developed an analytic theory of general
twisted intertwining operators, including skew-symmetry and
contragredient isomorphisms, $P(z)$-tensor products under suitable
conditions, and crossed commutativity and braiding maps
\cite{DH}.  They restated Huang's crossed tensor-category conjecture
and proposed a stronger crossed \emph{modular} tensor-category
conjecture in a suitable sense.  They expressly noted that the
modularity notions considered by Kirillov and Turaev differ, so the
stronger assertion requires a choice of categorical definition.
In the present paper, modularity is understood in Kirillov's sense.

Xu \cite[Theorem~1.1]{Crossed} constructs, for strongly rational
$V$ and finite $G$, a faithfully $G$-graded $G$-crossed braided fusion
category of twisted $V$-modules.  Its tensor products represent the
complete Du--Huang intertwining operators, with specified parallel
transports, positive-half-turn crossed braidings, action tensorators
and contragredient pairings. We use this established structure as the
categorical input. The remaining question is whether these concrete
operators carry the additional ribbon and nondegeneracy structures
required for Kirillov's modularity.

There are genuine compatibility issues beyond the existence of a
crossed braiding.  To apply Huang's modularity theorem to the neutral
component of the category in \cite{Crossed}, one must compare the
ordinary intertwining operators defined by the Jacobi identity with
the \emph{complete} intertwining operators of Du and Huang; the latter
impose absolute convergence and multivariable continuation
conditions that require verification.  Moreover, the conformal
monodromy $e^{2\pi iL(0)}$ must be interpreted as a crossed twist and
checked against the chosen positive-half-turn braiding, the action
tensorators and categorical duality.  Only after constructing this
crossed ribbon structure can the equivariantization be treated as a
ribbon category.  Finally, the nondegeneracy criterion of
\cite[Proposition~4.56(ii)]{DGNO} uses \emph{both} nondegeneracy of
the neutral component and faithfulness of the grading; the
comparison with Kirillov's extended $S$-operator then uses
\cite[Theorem~10.5]{Kirillov}.

Starting from Xu's established crossed-category construction, we
prove this categorical modular refinement. The neutral component is
identified with Huang's ordinary modular category, the conformal
twist is shown to satisfy crossed balancing and duality, and the
equivariantization becomes a nondegenerate ribbon fusion category.
Kirillov's theorem then gives invertibility of the extended
$S$-operator. Our modularity statement is categorical: it concerns the crossed
category and Kirillov's extended $S$-operator. The genus-one
transformation laws of twisted trace functions form a separate
analytic question.

\begin{theorem}\label{thm:main}
Let $V$ be strongly rational and $G\leq\Aut(V)$ finite, and let
$\cC_G$ be Xu's faithfully $G$-graded $G$-crossed braided fusion
category of grading-restricted finite-order
twisted $V$-modules \cite[Theorem~1.1]{Crossed}. We use its binary
$P(z)$-representing property and canonical operators
\cite[Theorem~4.3, equations~(4.3)--(4.4)]{Crossed}, its pullback
tensorators and monoidal action coherence
\cite[equation~(4.7), Proposition~8.5, equations~(8.15), (8.16),
(8.18), (8.19)]{Crossed}, its product--iterate normalization
\cite[Theorem~8.3, equations~(8.10)--(8.11)]{Crossed}, its
positive-half-turn crossed commutors and parallel-transport identities
\cite[Proposition~9.6, equations~(9.27)--(9.30)]{Crossed}, and its
contragredient pairings and rigidity
\cite[Lemma~10.1, equations~(10.1)--(10.2), Proposition~10.3]{Crossed}.
Then:
\begin{enumerate}[label=\textup{(\roman*)},leftmargin=*]
\item The maps $\theta_X=e^{2\pi iL_X(0)}:X\longrightarrow T_gX$ for $X\in\cC_g$ define a canonical crossed ribbon structure in Kirillov's conventions.
\item The equivariantization $\cC_G^G$, with its induced ribbon twist, is a modular tensor category, and $\cC_G$ is modular as a $G$-equivariant category in the sense of Kirillov. In particular the extended $\widetilde S$-operator is invertible.
\item The Frobenius--Perron dimensions satisfy
\[
\FPdim(\cC_g)=\FPdim(\Rep(V)),\qquad
\FPdim(\cC_G^G)=|G|^2\FPdim(\Rep(V)).
\]
\end{enumerate}
\end{theorem}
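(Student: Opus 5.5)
The proof will proceed in three stages matching (i)--(iii), with the categorical input taken entirely from \cite{Crossed}.

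\emph{Stage 1: the crossed twist.} For $X\in\cC_g$ the operator $e^{2\pi iL_X(0)}$ is well defined because $L_X(0)$ acts semisimply with eigenvalues in $\frac{1}{|g|}\mathbb Z+\text{const}$ on each simple summand. I first check that it is a module map $X\to T_gX$. Here $T_g$ is the pullback action, and the check is the twisted Jacobi identity: conjugating $Y_X(v,x)$ by $e^{2\pi iL(0)}$ replaces $x$ by $e^{2\pi i}x$, which equals $Y_X(gv,x)$ (up to the chosen sign convention for $g$). Naturality and compatibility with the action tensorators, $\theta_{T_hX}=T_h(\theta_X)$, follow because $L(0)$ commutes with the pullback identifications of \cite[equation~(4.7)]{Crossed}.

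The crossed balancing axiom
\[
\theta_{X\bo Y}=c_{T_gY,T_gX}\circ c_{X,T_g Y}\ (\text{suitably}) \circ(\theta_X\bo\theta_Y)
\]
is where I expect the main work. I would verify it on the canonical $P(z)$-intertwining operator representing $X\bo Y$ \cite[Theorem~4.3]{Crossed}, using the $L(0)$-conjugation property $e^{2\pi iL(0)}\mathcal Y(w,z)e^{-2\pi iL(0)}=\mathcal Y(e^{2\pi iL(0)}w,e^{2\pi i}z)$. The full monodromy $z\mapsto e^{2\pi i}z$ is then factored as two positive half-turns. I would match each half-turn with one commutor from \cite[Proposition~9.6, (9.27)--(9.30)]{Crossed}, tracking the parallel-transport branches and the product--iterate normalization \cite[(8.10)--(8.11)]{Crossed} so that the branch of $\log z$ agrees on both sides. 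Duality compatibility $\theta_{X^*}=(\theta_{T_gX})^*$ comes from the contragredient pairing \cite[(10.1)--(10.2)]{Crossed}, since $L(0)$ is self-adjoint with respect to $\langle\,,\rangle$ and $L(1)$-twisting terms do not affect the $L(0)$-spectrum.

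\emph{Stage 2: modularity.} The first step is to identify the neutral component $\cC_1$ with Huang's modular tensor category $\Rep(V)$ \cite{Huang}. For this I show that every ordinary Jacobi-identity intertwining operator among untwisted modules is complete in the Du--Huang sense: under $C_2$-cofiniteness, Huang's differential-equation results \cite{HuangDE} give absolute convergence and analytic continuation of products and iterates. The tensor products, braidings and twists of $\cC_1$ then coincide with Huang's, and in particular $\cC_1$ is nondegenerate.

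With the crossed ribbon structure from Stage~1, the equivariantization $\cC_G^G$ is a ribbon fusion category. By \cite[Proposition~4.56(ii)]{DGNO}, faithfulness of the grading together with nondegeneracy of $\cC_1$ gives nondegeneracy of $\cC_G^G$, so $\cC_G^G$ is modular. Kirillov's \cite[Theorem~10.5]{Kirillov} then yields invertibility of $\widetilde S$. Some care is needed to confirm that Kirillov's convention for the crossed twist matches ours; this is a matter of inverting $g$ if necessary.

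\emph{Stage 3: dimensions.} Faithful grading of a fusion category forces all components to have equal Frobenius--Perron dimension, so
\[
\FPdim(\cC_g)=\FPdim(\cC_1)=\FPdim(\Rep(V)).
\]
It follows that $\FPdim(\cC_G)=|G|\FPdim(\Rep(V))$, and equivariantization multiplies dimension by $|G|$. This gives $\FPdim(\cC_G^G)=|G|^2\FPdim(\Rep(V))$.

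The main obstacle is the balancing identity in Stage~1: the signs, branch choices and the direction of $g$ in positive-half-turn commutors must be aligned exactly.
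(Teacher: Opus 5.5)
Your overall architecture is the paper's. You show $\theta_X$ is a module map $X\to T_gX$ by $L(0)$-conjugation of the twisted fields. You prove balancing by realizing the full turn $e^{2\pi i}z$ as two positive half-turn commutors on the canonical operator $\mathcal Y_{X,Y}$. You identify the neutral sector with Huang's modular category by comparing intertwining-operator spaces. You then finish with \cite[Proposition~4.56(ii)]{DGNO}, \cite[Theorem~10.5]{Kirillov}, \cite[Proposition~8.20]{ENO} and \cite[Proposition~4.26]{DGNO}.

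The step that does not close as you describe it is the duality axiom. In the crossed setting it reads $\theta_{X^\vee}=T_{g^{-1}}(\theta_X^\vee)\,T_{g^{-1}}(b_{g,X})$, where $b_{g,X}:T_g(X^\vee)\to(T_gX)^\vee$ is the comparison built from $J^g$, $j_g$ and the evaluations. Your $\theta_{X^*}=(\theta_{T_gX})^*$ is not well typed; it should have source $X^\vee$ and target $T_{g^{-1}}X^\vee$. Self-transposition of $L(0)$ under the contragredient pairing proves only the restricted-dual identity $\theta_{DX}=T_{g^{-1}}(D\theta_X)\,(T_{g^{-1}}\tau_{g,X})^{-1}$. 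Here $\tau_{g,X}:D(T_gX)\to T_g(DX)$ is the identity on underlying spaces.

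For simple $X$, the two sides of the categorical identity are isomorphisms between simple objects, so they are automatically proportional. The unknown scalar is exactly the discrepancy between $b_{g,X}$ and the transport of $\tau_{g,X}^{-1}$ to the categorical duals. Removing it needs two inputs, which are the content of Proposition~\ref{prop:dual}:
\begin{itemize}
\item The mate $\phi_X:DX\to X^\vee$ of the pairing $e_X$ is a natural isomorphism. This holds because $e_X\neq0$ on simples, and then by additivity.
\item The pairing is $G$-covariant: $e_{T_kX}=j_k^{-1}T_k(e_X)J^k_{X,DX}(\id\bo\tau_{k,X})$. This gives $\phi_{T_kX}=b_{k,X}T_k(\phi_X)\tau_{k,X}$.
\end{itemize}
Covariance is an additional property of the concrete pairing, not a consequence of its naturality. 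Combined with naturality of $\phi$ for the morphism $\theta_X:X\to T_gX$, it transfers the restricted identity to the categorical one.

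There are two smaller corrections.

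First, your balancing formula does not typecheck and drops the tensorator. What your monodromy computation actually proves is $\theta_{X\bo Y}=J^{gh}_{X,Y}(T_q\theta_X\bo T_g\theta_Y)\,c_{T_gY,X}c_{X,Y}$ with $q=ghg^{-1}$. The factor $J^{gh}$ enters exactly when you use the normalization $\overline{J^{gh}_{X,Y}}\mathcal Y_{T_{gh}X,T_{gh}Y}=\mathcal Y_{X,Y}$ to return to the original operator.

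Second, in the neutral sector the paper does not use Huang's differential equations. Only one intertwining operator occurs among the $V$-insertions in Du--Huang's conditions. So weak commutativity and lower-boundedness force the matrix coefficients to be $t^\rho R(x_1/t,\dots,x_m/t)$ with $R$ rational. Convergence, continuation and regularity at every component-isolated corner then become elementary (Lemmas~\ref{lem:mixedrational} and~\ref{lem:rationalcorners}). Your route is heavier and would still have to verify the corner clause of \cite[Definition~2.8]{DH} in its multivariable, affinely transformed form. In either route you also need:
\begin{itemize}
\item the reverse inclusion (a complete operator satisfies the Jacobi identity), so that the two $P(z)$-products represent the same functor;
\item matching of the associators through the product--iterate normalization, not only of the braidings.
\end{itemize}
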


The comparison of ordinary and complete intertwining operators is
proved in Theorem~\ref{thm:neutralanalytic}, and the resulting
identification of the neutral tensor category appears in
Proposition~\ref{prop:neutralcomparison}.  Sections~4 and~5 establish
the crossed balancing and duality conditions, respectively.  The
equivariantization and nondegeneracy arguments, together with the
Frobenius--Perron dimension formulas, occupy Section~6.  That
section also derives a count of the simple objects in each twisted
sector in terms of the $g$-fixed simple untwisted modules
(Corollary~\ref{cor:simplecount}).

\section{Conventions and the canonical operators}

\subsection{Categorical, VOA, and twisted-module conventions}\label{subsec:twistedconventions}

A \emph{fusion category} is a $\C$-linear semisimple rigid monoidal
category with simple unit, finite-dimensional morphism spaces and
finitely many isomorphism classes of simple objects.  For a
\emph{braided} fusion category $\mathcal D$ with braiding $c$, its
\emph{M\"uger center} $\mathcal Z_2(\mathcal D)$ is the full
subcategory of objects $A$ for which
$c_{B,A}c_{A,B}=\id_{A\bo B}$ for all objects $B$.  Such a category
is \emph{nondegenerate} if $\mathcal Z_2(\mathcal D)$ is braided
equivalent to $\mathrm{Vec}$.  Here a \emph{modular tensor category} means a nondegenerate ribbon
fusion category.

We use the standard notion of a vertex operator algebra
$(V,Y,\mathbf1,\omega)$ over $\C$ \cite{FHL,DLM}.  In particular,
$V=\bigoplus_{d\in\mathbb Z}V_d$ has finite-dimensional homogeneous
subspaces and $V_d=0$ for $d\ll0$;
$\mathbf1\in V_0$, $\omega\in V_2$, and
$Y(u,x)=\sum_{n\in\mathbb Z}u_nx^{-n-1}$ is lower truncated on each
$v\in V$.  The vacuum and creation properties are
$Y(\mathbf1,x)=\id_V$ and $Y(u,x)\mathbf1\in u+xV[[x]]$.
The VOA Jacobi identity and the Virasoro relations for
$Y(\omega,x)=\sum_{n\in\mathbb Z}L(n)x^{-n-2}$ are understood as in
\cite{FHL}; in particular, $L(0)|_{V_d}=d\,\id$ and
$Y(L(-1)u,x)=\partial_xY(u,x)$.  Write $\wt u=d$ for
$u\in V_d$.  An automorphism preserves $Y$, $\mathbf1$ and $\omega$.
The VOA is \emph{of CFT type} if $V_d=0$ for $d<0$ and
$V_0=\C\mathbf1$; it is \emph{simple} if it has no proper nonzero
ideal.  Define
$C_2(V)=\operatorname{span}_{\C}\{u_{-2}v:u,v\in V\}$;
$C_2$-cofiniteness means $\dim_{\C}V/C_2(V)<\infty$.
The \emph{restricted contragredient} $V'$ is its graded dual with
the contragredient vertex operators \cite{FHL}.  The condition
$V\simeq V'$ as $V$-modules is called \emph{self-contragredience};
equivalently, $V$ admits a nondegenerate invariant bilinear form.
Here invariance means
$(Y(u,x)v,w)=(v,Y(e^{xL(1)}(-x^{-2})^{L(0)}u,x^{-1})w)$.

Fix a finite-order $g\in\Aut(V)$, let $N=\operatorname{ord}(g)$, and
write
\[
 V^{(r)}=\{u\in V:gu=e^{2\pi i r/N}u\},\qquad
 V=\bigoplus_{r=0}^{N-1}V^{(r)}.
\]
We recall the finite-order \emph{formal} twisted-module convention
of \cite[Definition~3.1]{DLM}.  A weak $g$-twisted module is a vector
space $M$ with a linear state--field map $Y_M^g$, such that for
$u\in V^{(r)}$ one has
\[
 Y_M^g(u,x)=\sum_{n\in r/N+\mathbb Z}u_n^M x^{-n-1},
 \qquad u_n^Mw=0\quad(n\gg0,\ w\in M),
\]
$Y_M^g(\mathbf1,x)=\id_M$, and the \emph{twisted Jacobi identity}
\begin{align}\label{eq:twistedjacobi}
 &x_0^{-1}\delta\!\left(\frac{x_1-x_2}{x_0}\right)
  Y_M^g(u,x_1)Y_M^g(v,x_2)
 -x_0^{-1}\delta\!\left(\frac{x_2-x_1}{-x_0}\right)
  Y_M^g(v,x_2)Y_M^g(u,x_1)\notag\\
 &\quad=x_2^{-1}\left(\frac{x_1-x_0}{x_2}\right)^{-r/N}
  \delta\!\left(\frac{x_1-x_0}{x_2}\right)
  Y_M^g(Y(u,x_0)v,x_2),\qquad u\in V^{(r)},\ v\in V.
\end{align}
Here $\delta(x)=\sum_{j\in\mathbb Z}x^j$ and the formal binomial
expressions are expanded in their prescribed nonnegative powers of
the second variable.  The fractional factor in
\eqref{eq:twistedjacobi} agrees with the \emph{positive} eigenvalue
convention above: $gu=e^{2\pi ir/N}u$ gives
$n\in r/N+\mathbb Z$, not $-r/N+\mathbb Z$.
Since $\omega\in V^{(0)}$, the field
$Y_M^g(\omega,x)=\sum_{m\in\mathbb Z}L_M(m)x^{-m-2}$
has integral modes; \eqref{eq:twistedjacobi} implies the usual
Virasoro relations and, for $u\in V_d$,
$[L_M(0),u_n^M]=(d-n-1)u_n^M$ \cite[Section~3]{DLM}.
A twisted-module morphism is a linear map commuting with all the
fields $Y_M^g(u,x)$ (equivalently, all twisted modes).

For $\lambda\in\C$, set
$M_{[\lambda]}=\bigcup_{k\ge1}\ker(L_M(0)-\lambda)^k$.
A \emph{generalized} $g$-twisted module is a weak twisted module
with $M=\bigoplus_{\lambda\in\C}M_{[\lambda]}$.
It is \emph{lower bounded} in the grading-restricted class used
here if $M_{[\lambda]}=0$ whenever $\Re\lambda<B$ for some
$B\in\mathbb R$, and \emph{grading restricted} if in addition
$\dim M_{[\lambda]}<\infty$ for every $\lambda$.
It is \emph{ordinary} if $L_M(0)$ acts semisimply; in that case
$M_\lambda=M_{[\lambda]}$ is the genuine eigenspace.
For general $M$ we reserve brackets for generalized eigenspaces,
and write $\pi^M_\lambda:M\to M_{[\lambda]}$ for the canonical
projection.  Thus
$u_n^M M_{[\lambda]}\subseteq
M_{[\lambda+d-n-1]}$ when $u\in V_d$.
For a grading-restricted generalized module put
$\overline M=\prod_\lambda M_{[\lambda]}$; when $M$ is ordinary,
this is $\prod_\lambda M_\lambda$, and $\pi^M_\lambda$
projects to $M_\lambda$.

For comparison with the preceding generalized-weight terminology,
an \emph{admissible} $g$-twisted module in the sense of
\cite[Definition~3.3]{DLM} is a weak twisted module with a grading
$M=\bigoplus_{j\in N^{-1}\mathbb Z_{\ge0}}M(j)$ satisfying
$u_n^M M(j)\subseteq M(j+d-n-1)$ for $u\in V_d$; no
semisimplicity of $L_M(0)$ or finite dimensionality of $M(j)$ is
required in this definition.  \emph{$g$-rationality} means complete
reducibility of admissible $g$-twisted modules, whereas
\emph{$g$-regularity} means that \emph{every weak} $g$-twisted module
is an algebraic direct sum of ordinary irreducible $g$-twisted
modules.  For $g=1$ these are called \emph{rationality} and
\emph{regularity}, respectively.  We call $V$ \emph{strongly
rational} if it is simple, of CFT type, self-contragredient,
rational and $C_2$-cofinite.  In particular, rationality here
concerns admissible untwisted modules, not arbitrary weak modules.

Du--Huang's generalized $g$-twisted modules without a $g$-action
are defined by analytic product--iterate duality, branch equivariance,
$L(0)$-grading and the $L(-1)$-derivative property
\cite[Section~2]{DH}.  The comparison with the formal convention
above is valid in the grading-restricted finite-order setting, as
follows.

\begin{proposition}\label{prop:modulecomparison}
Let $g$ have finite order and let $M$ be equipped with a direct sum of
finite-dimensional generalized $L_M(0)$-weight spaces whose real
parts have a common lower bound.  For the same state--field map,
the following are equivalent:
\begin{enumerate}[label=\textup{(\roman*)},leftmargin=*]
\item $M$ is a grading-restricted generalized analytic $g$-twisted
module without an auxiliary $g$-action in the sense of
\cite[Section~2]{DH};
\item $M$ is a grading-restricted generalized module satisfying the
finite-order formal weak $g$-twisted Jacobi identity of
\cite[Definition~3.1]{DLM}, with its indicated generalized weight
grading.
\end{enumerate}
The correspondence leaves the vertex operators and module maps
unchanged.  In particular, it applies to ordinary irreducible
finite-order formal twisted modules.
\end{proposition}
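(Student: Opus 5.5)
The plan is to prove the two implications separately, with the same state--field map $Y_M^g$ on both sides, and to pass through the classical equivalence between a formal Jacobi identity and analytic rationality, associativity and commutativity, as in \cite{FHL} and \cite[Section~3]{DLM}. Because $g$ has finite order $N$, every mode index lies in $N^{-1}\mathbb Z$. We will therefore use the substitution $x=t^N$, which turns twisted fields into single-valued formal Laurent series in $t$. This reduces the multivalued statements in \cite[Section~2]{DH} to statements about rational functions in $z_1^{1/N}$, $z_2^{1/N}$ and $z_1-z_2$, with the branch fixed by $\log$.

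For (ii)$\Rightarrow$(i), start from the twisted Jacobi identity \eqref{eq:twistedjacobi}. First derive weak associativity and weak commutativity in the standard way.
\begin{enumerate}
\item For $u\in V^{(r)}$ there is $k\ge0$ with $(x_1-x_2)^kY_M^g(u,x_1)Y_M^g(v,x_2)=(x_1-x_2)^kY_M^g(v,x_2)Y_M^g(u,x_1)$.
\item $(x_0+x_2)^{l+r/N}Y_M^g(u,x_0+x_2)Y_M^g(v,x_2)w=(x_2+x_0)^{l+r/N}Y_M^g(Y(u,x_0)v,x_2)w$ for $l\gg0$ depending on $u,w$.
\end{enumerate}
Pair these with $w'\in M'$, the graded dual built from the finite-dimensional generalized weight spaces. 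Grading restriction, together with $u_n^MM_{[\lambda]}\subseteq M_{[\lambda+d-n-1]}$, makes each matrix coefficient a finite sum. We obtain that $\langle w',Y_M^g(u,z_1)Y_M^g(v,z_2)w\rangle$ converges in $|z_1|>|z_2|>0$ to $z_1^{-r/N}$ times a function $f(z_1,z_2)$ that is rational in $z_1$, $z_2$, $z_1-z_2$ and $z_2^{1/N}$. The iterate converges in $|z_2|>|z_1-z_2|>0$ to the same function once $z_1^{-r/N}$ is expanded around $z_2$. This is the product--iterate duality. Branch equivariance is then the statement that continuing $z_1$ once around $0$ multiplies the function by $e^{-2\pi i r/N}$, which matches $gu=e^{2\pi ir/N}u$ under the positive eigenvalue convention fixed above. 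The $L(0)$-grading condition is immediate. The $L(-1)$-derivative property comes from setting $v=\mathbf1$ and using the creation property together with $Y(L(-1)u,x)=\partial_xY(u,x)$, or directly from the Virasoro relations implied by \eqref{eq:twistedjacobi}.

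For (i)$\Rightarrow$(ii), run the argument in reverse. Du--Huang's product--iterate duality gives a single multivalued function with prescribed expansions in the two regions. Finite order plus branch equivariance forces the monodromy around $z_1=0$ to be the scalar $e^{-2\pi i r/N}$ on $V^{(r)}$, so after multiplying by $z_1^{r/N}$ the function becomes rational in $z_1^{1/N}$. The poles at $z_1=z_2$ have bounded order, uniformly for fixed $u,v$, because of grading restriction. The mode expansions of $Y_M^g(u,x)$ are therefore supported on $r/N+\mathbb Z$, and lower truncation follows from the lower bound on weights. The formal Jacobi identity \eqref{eq:twistedjacobi} is then recovered by the residue argument of \cite[Proposition~3.4.3]{FHL} adapted to the twisted case, as in \cite[Section~3]{DLM}: one expands the common rational function in the three regions and takes the $\delta$-function combination. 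The vacuum property follows from the unit axiom in \cite{DH}.

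In both directions morphisms are linear maps commuting with all $Y_M^g(u,x)$, so the two categories coincide. Ordinary irreducible twisted modules are grading restricted by \cite{DLM}, which gives the final sentence.

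The main obstacle is matching branch conventions. Du--Huang use $\log z$ with a fixed branch cut and define branch equivariance analytically, while \eqref{eq:twistedjacobi} uses the formal factor $((x_1-x_0)/x_2)^{-r/N}$. I would first check that the two sign conventions agree, by testing the case $v=\mathbf1$ and the case where $u$ is a single $g$-eigenvector. I would then verify that the expansion of $z_1^{r/N}$ about $z_2$, taken in the region $|z_1-z_2|<|z_2|$, matches the formal binomial expansion in nonnegative powers of $x_0$.
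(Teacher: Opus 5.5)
Your proposal is correct in outline, but it takes a different route from the paper. The paper does not redo the formal--analytic comparison. In (i)$\Rightarrow$(ii) it reads off the support $r/N+\mathbb Z$ and the absence of $\log z$ terms directly from Du--Huang's branch identity: each log-polynomial coefficient $P$ satisfies $e^{2\pi i(r/N-n-1)}P(t+2\pi i)=P(t)$ and must therefore be constant. For everything else, in both directions, it imports Huang--Yang's Theorems~2.6, 2.7 and~2.10. The work there is in meeting their hypotheses. One supplies an auxiliary $\overline{\mathbb C}_+$-grading by shifting weights, and an auxiliary $g$-action $g_M=e^{-2\pi iL_M(0)}$, which satisfies $g_MY_M^g(u,x)g_M^{-1}=Y_M^g(gu,x)$ by the $L(0)$-bracket. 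One also notes that $g$ has zero nilpotent part, so their log-free field is $Y_M^g$.

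You instead re-derive the classical equivalence by hand in this finite-order, log-free setting. Your ingredients are weak commutativity, weak associativity with exponent $l+r/N$ (both correctly stated), rationality, expansions in the three chambers, and the delta-function/residue argument. This route needs no auxiliary grading or $g$-action and is self-contained. The cost is that you must do yourself the absolute convergence, the term-by-term matching with weight-projected sums, and the preferred-branch bookkeeping, which is exactly what the paper delegates to Huang--Yang. Your branch test is the right one: the expansion of $(x_2+x_0)^{r/N}$ in nonnegative powers of $x_0$ should match $z_1^{r/N}$ on the subregion $|\arg z_1-\arg z_2|<\pi/2$.

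If you carry this out, tighten four points.
\begin{enumerate}
\item Rationality of $\langle w',Y_M^g(u,z_1)Y_M^g(v,z_2)w\rangle$ does not come from matrix coefficients being ``finite sums''. They are infinite series whose individual coefficients are single terms. Rationality comes from weak commutativity together with two-sided truncation, as in the proof of Lemma~\ref{lem:mixedrational}.
\item You must also verify the reversed-product chamber $|z_2|>|z_1|>0$, which is part of Du--Huang's duality.
\item In (i)$\Rightarrow$(ii), lower truncation and finite pole orders at $z_1=0,\infty$ need the $L(0)$-commutator formula. That formula must first be extracted from duality applied to the $g$-fixed vector $\omega$. The pole-order bound at $z_1=z_2$ comes from lower truncation of $Y(u,x)v$ in $V$, not from the grading restriction of $M$.
\item Since Du--Huang's fields a priori allow logarithms, say explicitly that scalar monodromy forces the $\log z$-polynomial coefficients to be constant.
\end{enumerate}
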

\begin{proof}
First consider (i).  For $u\in V^{(r)}$, compare the two sides of
Du--Huang's branch identity
$(Y_M^g)^{p+1}(gu,z)=(Y_M^g)^p(u,z)$.
For a fixed power $x^{-n-1}$, its logarithmic coefficient is a
polynomial $P(t)$ in $t=\log z$ satisfying
$e^{2\pi i(r/N-n-1)}P(t+2\pi i)=P(t)$.
The highest nonzero coefficient first forces
$e^{2\pi i(r/N-n-1)}=1$; comparison of the next coefficient
then forces $P$ to be constant.  Consequently
$Y_M^g(u,x)=\sum_{n\in r/N+\mathbb Z}u_n^M x^{-n-1}$ has no
logarithmic terms.  The $L(0)$-bracket identity follows from
analytic duality and the Virasoro field, while the stated lower
bound and the weight-shift formula give lower truncation; see
\cite[Section~2, following the definition of generalized twisted
modules]{DH}.

We apply the analytic--Jacobi comparison in
\cite[Theorems~2.6, 2.7 and 2.10]{HuangYangAssoc}.  Its auxiliary
$\overline{\mathbb C}_+$- and $g$-gradings may be supplied here from
the given $L_M(0)$-decomposition: translate the weight indices by
a sufficiently small real number so that their real parts are
nonnegative, and take the generalized eigenspace grading of
$g_M:=e^{-2\pi iL_M(0)}$.  This auxiliary operator supplies the grading compatibility required
in the analytic--Jacobi comparison; objects of $\cC_g$ retain their
given twisted-module data.  The $L(0)$-bracket and the integral weights
of $V$ give $g_MY_M^g(u,x)g_M^{-1}=Y_M^g(gu,x)$ and the
required grading compatibility.  Thus the grading and equivariance
conditions of \cite[Definition~2.1]{HuangYangAssoc} are satisfied.
In their notation the nilpotent part $\mathcal N$ of $g$ on $V$
is zero, so their logarithm-free field $Y_0^g$ is precisely
$Y_M^g$.  Theorem~2.10, (1)$\Rightarrow$(2)$\Rightarrow$(3),
then supplies the full twisted Jacobi identity.  Their expression
(2.29) is the standard twisted delta identity, equivalent to
\eqref{eq:twistedjacobi} with $a=r/N$.  The comparison is made by extracting formal coefficients from the
absolutely convergent product, commuted-product and iterate branches.

Conversely, start with (ii).  The formal twisted Jacobi identity
supplies the $L(0)$-bracket, Virasoro relations and
$L(-1)$-derivative, as in \cite[Section~3]{DLM}.  The same
construction of the auxiliary grading and $g_M$ applies; the
formal $g$-eigenspace condition gives branch equivariance.
Theorem~2.10 of \cite{HuangYangAssoc}, now in the direction
(3)$\Rightarrow$(4)$\Rightarrow$(2)$\Rightarrow$(1), gives
analytic duality, including \emph{absolute} convergence, rather
than only formal rationality.  More explicitly, its Theorem~2.6
establishes convergence in
$|z_1|>|z_2|>0$, $|z_2|>|z_1|>0$ and
$|z_2|>|z_1-z_2|>0$, with the prescribed compatible branches.
For $u\in V^{(r)}$, $v\in V^{(s)}$ and vectors paired with the
restricted dual, the common multivalued continuation has the form
\[
 z_1^{-r/N}z_2^{-s/N}
 \frac{P(z_1,z_2)}{z_1^a z_2^b(z_1-z_2)^c},
 \qquad P\in\mathbb C[z_1,z_2],\quad a,b,c\in\mathbb Z_{\ge0},
\]
with finite sums for inhomogeneous inputs.  Expansions of its
rational factors in the respective chambers are absolutely
convergent on compact subsets.  The restricted-dual contractions
therefore yield the required projected weight sums, not merely an
identity of formal Laurent series.  Finite-dimensional generalized
weight spaces and the lower bound give the other grading
requirements of \cite[Section~2]{DH}.  The proofs of
\cite[Theorems~2.6 and 2.10]{HuangYangAssoc} supply the branch and
product--iterate comparison used here.  Since the two realizations
use exactly the same operators, their intertwining morphisms also
coincide.
\end{proof}

Theorem~2.4 of \cite{YangTwAssoc} supplies a complementary
formal criterion: for finite-order $g$, the vacuum and
lower-truncation properties together with weak associativity
\emph{with the exponent in $r/N+\mathbb N$} imply the full twisted
Jacobi identity.  The absolute-convergence and branch statements needed to pass
between the analytic and formal formulations above are provided by
\cite{HuangYangAssoc}.

\subsection{The twisted-module category, duals, and dimensions}\label{subsec:modulecategory}

Under the present strong-rationality hypotheses the required
semisimplicity has a precise external source.  By
\cite[Theorem~4.5]{ABD}, $V$ is regular, since it is of CFT type,
rational and $C_2$-cofinite.  The simple VOA $V$ has nonnegative
$L(0)$-spectrum and a nondegenerate invariant form; hence
\cite[Corollary~5.26]{CarnahanMiyamoto} applies to every finite-order
$g$ and gives both $g$-rationality and $g$-regularity for the
formal weak twisted modules.  Then \cite[Theorem~8.1(b),(c)]{DLM}
gives finitely many irreducible admissible $g$-twisted types, all
ordinary; conversely ordinary irreducibles admit admissible
gradings by \cite[Lemma~3.4]{DLM}.  In particular, a grading-restricted
generalized analytic object is, by Proposition~\ref{prop:modulecomparison}, a
\emph{finite} direct sum of ordinary irreducibles: weak-module
regularity decomposes it into simples, and finite dimensionality of
a chosen nonzero weight space for each of the finitely many simple
types bounds the multiplicity of that type.  The nonemptiness of
each sector follows separately from
\cite[Theorem~9.1]{DLMModular}, which assumes simplicity,
$C_2$-cofiniteness and finite order of $g$.  The module-level formal--analytic comparison is supplied here by
Proposition~\ref{prop:modulecomparison}; the crossed tensor
products and their coherence are provided by Xu's construction
\cite[Theorem~1.1]{Crossed}.

Let $\cC_g$ be the category of grading-restricted generalized
$g$-twisted $V$-modules in the analytic sense used by \cite{DH,Crossed},
with the finite-order formal realization established by
Proposition~\ref{prop:modulecomparison}.  Its morphisms are the
weight-preserving twisted-module maps; for objects in one sector we
also denote these spaces by $\Hom_V(X,Y)$.  Define the \emph{external}
direct sum $\cC_G=\bigoplus_{g\in G}\cC_g$ to have objects finite
direct sums of pairs $(g,X)$, $X\in\cC_g$, and to have
$\Hom_{\cC_G}((g,X),(h,Y))=0$ when $g\ne h$ and
$\Hom_{\cC_g}(X,Y)$ when $g=h$, extended additively to sums.
The label $g$ is part of the object and is not determined by an
identification of the underlying vector spaces.  The $G$-crossed
tensor product, action and braiding on this additive category are
\emph{additional structures} imported from \cite{Crossed}, not
part of the definition of a twisted module.
Finally, $\Rep(V)$ denotes the category of ordinary untwisted
$V$-modules, with the Huang--Lepowsky vertex tensor product
\cite{HL}.  Under the hypotheses above this is a fusion category,
and in fact a modular tensor category, by Huang's rigidity and
modularity theorem \cite{Huang}.
For a grading-restricted generalized $g$-twisted module
$X=\bigoplus_{\lambda}X_{[\lambda]}$, its \emph{restricted
contragredient} is the algebraic direct sum
$DX=X'=\bigoplus_{\lambda}X_{[\lambda]}^*$, where
$X_{[\lambda]}^*=\Hom_{\C}(X_{[\lambda]},\C)$.
The weight-$\lambda$ generalized eigenspace of $DX$ is
$X_{[\lambda]}^*$; in particular, if $X$ is ordinary, then
$DX=\bigoplus_\lambda X_\lambda^*$.  With the evaluation
pairing $\langle\ ,\ \rangle:X'\times X\to\C$, its twisted
vertex operator is specified by
\begin{equation}\label{eq:restrictedcontragredient}
 \bigl\langle Y_{DX}^{g^{-1}}(u,x)\varphi,w\bigr\rangle
 =\bigl\langle\varphi,
 Y_X^g\bigl(e^{xL(1)}(-x^{-2})^{L(0)}u,x^{-1}\bigr)w\bigr\rangle,
 \quad u\in V,\ \varphi\in DX,\ w\in X.
\end{equation}
Here $(-x^{-2})^{L(0)}u=(-1)^d x^{-2d}u$ for $u\in V_d$;
the formula is expanded formally with the opposite coordinate
$x^{-1}$.  The factor $e^{xL(1)}$ is a finite sum on each
homogeneous $u$.  The contragredient construction of
\cite[Theorem~5.2.1]{FHL}, its admissible twisted analogue
\cite[Lemma~3.7]{DLM}, and the analytic generalized twisted
construction of \cite[Section~2]{DH} give a grading-restricted
$g^{-1}$-twisted module in the present setting.  Indeed,
for $u\in V^{(r)}$ the powers in
\eqref{eq:restrictedcontragredient} have mode indices in
$-r/N+\mathbb Z$, as required for $g^{-1}$.  This
construction uses the weightwise algebraic dual, not the full
linear dual $X^*$ or the completion $\overline X$.

For a twisted-module morphism $f:X\to Y$ in a fixed sector,
$Df:Y'\to X'$ is its weightwise transpose,
$\langle Df(\psi),w\rangle=\langle\psi,f(w)\rangle$.
Thus $D$ is contravariant and reverses the sector degree.
The canonical evaluation into the restricted double dual,
\[
 \delta_X:X\longrightarrow D^2X,\qquad
 \delta_X(w)(\varphi)=\varphi(w),
\]
is a natural twisted-module isomorphism: each generalized
weight space is finite-dimensional and the second application of
\eqref{eq:restrictedcontragredient} recovers the original
vertex operators, as in the double-contragredient identity
\cite[Proposition~5.3.1]{FHL}.  The comparison with the \emph{categorical} right dual $X^\vee$
is established from pairing and rigidity in Proposition~\ref{prop:dual}.

The pullback $T_k$ preserves $L(0)$ and the underlying graded
vector space.  Consequently $D(T_kX)$ and $T_k(DX)$ both belong
to $\cC_{kg^{-1}k^{-1}}$, and the identity on their common
restricted-dual vector space defines a natural isomorphism
$\tau_{k,X}:D(T_kX)\xrightarrow{ \sim }T_k(DX).$
For $u\in V$ and $w\in X$, this is an intertwiner since
\[
 \langle Y_{D(T_kX)}(u,x)\varphi,w\rangle
 =\langle\varphi,Y_X^g(k^{-1}e^{xL(1)}
 (-x^{-2})^{L(0)}u,x^{-1})w\rangle
 =\langle Y_{T_k(DX)}(u,x)\varphi,w\rangle,
\]
where $k^{-1}$ commutes with $L(1)$ and $L(0)$ because
$k$ fixes $\omega$.
With the underlying functor identity $T_kT_l=T_{kl}$, its
coherence is $\tau_{kl,X}=T_k(\tau_{l,X})\,
\tau_{k,T_lX}$.  These are natural comparisons of twisted-module functors. The
monoidal constraints enter in the later comparison with categorical
duals.

For a grading-restricted generalized module $M$, the
\emph{weight completion} is the direct product
$\overline M=\prod_\lambda M_{[\lambda]}$, as defined above,
with coordinate projections
$\pi^M_\lambda:\overline M\to M_{[\lambda]}$ extending the
projections on $M$.  The restricted pairing extends to
$M'\times\overline M$ by
$\langle(\varphi_\lambda)_\lambda,(m_\lambda)_\lambda\rangle
=\sum_\lambda\varphi_\lambda(m_\lambda)$; this sum is finite
because $M'$ is a \emph{direct sum}.  It separates points of
$\overline M$.  Since a twisted-module map $f:M\to N$ commutes
with $L(0)$, it preserves each generalized weight space and has
the canonical componentwise extension
\[
 \overline f:\overline M\longrightarrow\overline N,\qquad
 (m_\lambda)_\lambda\longmapsto
 (f|_{M_{[\lambda]}}m_\lambda)_\lambda.
\]
For ordinary modules, the same statements use eigenspaces
$M_\lambda$ in place of $M_{[\lambda]}$.  The completed
codomains of the canonical intertwining operators below are
understood in this sense, not as restricted double duals.

For a fusion category $\mathcal D$, the
\emph{Frobenius--Perron dimension} $\FPdim_{\mathcal D}(X)$
of an object is obtained from the positive dimension character
on its Grothendieck ring: on a simple object $X$ it is the
Perron--Frobenius eigenvalue of left multiplication by $[X]$,
and it is extended additively (and multiplicatively) to objects
\cite[Definition~3.3.3 and Proposition~3.3.6]{EGNO}.  In
particular, $\FPdim_{\mathcal D}(X)>0$ for $X\ne0$, and
$\FPdim(\mathcal D) =\sum_{[X]\in\Irr(\mathcal D)}\FPdim_{\mathcal D}(X)^2.$
The homogeneous component $\cC_g$ need not itself be monoidal.
Throughout, its dimension means
$\FPdim(\cC_g):= \sum_{[X]\in\Irr(\cC_g)}\FPdim_{\cC_G}(X)^2,$
with $X$ regarded as a labeled object of the \emph{whole}
fusion category $\cC_G$ furnished by \cite{Crossed}.
This is a finite positive real number because the grading is
faithful and $\cC_G$ is a fusion category.  On the neutral
fusion subcategory the dimension character restricts to its
intrinsic Frobenius--Perron dimension.  The later equality of
homogeneous-component dimensions uses the \emph{faithful} finite
$G$-grading in \cite[Proposition~8.20]{ENO}, while the
corresponding equivariantization formula uses a finite group
acting by tensor autoequivalences on a fusion category
\cite[Proposition~4.26]{DGNO}.

\subsection{The \texorpdfstring{$G$}{G}-crossed categorical structure}\label{subsec:crossedconventions}

The $G$-grading of the fusion category $\cC_G$ means that its
homogeneous summands $\cC_g$ are full abelian subcategories,
$\Hom_{\cC_G}(X_g,Y_h)=0$ for $g\ne h$, every object is a finite
direct sum of homogeneous objects, and the tensor unit $V$ belongs
to $\cC_1$.  The tensor product and the pullback functors have the
following conventions:
\begin{equation}\label{eq:conventions}
 \cC_g\bo\cC_h\subseteq\cC_{gh},\quad
 T_k:\cC_g\to\cC_{kgk^{-1}},\quad
 Y_{T_kX}(v,z)=Y_X(k^{-1}v,z),\quad T_kT_l=T_{kl}.
\end{equation}
The grading is \emph{faithful} if every $\cC_g$ is nonzero.
The categorical right dual of a homogeneous object of degree $g$
can be chosen in $\cC_{g^{-1}}$, by rigidity and the grading.
These grading and tensor structures are those established in
\cite[Theorem~1.1, Proposition~8.5, Proposition~9.6]{Crossed}; the
external sector label is retained even when the underlying vector
spaces of objects agree.

The functor $T_k$ acts as the identity on the underlying vector
spaces and linear module morphisms, but pulls back the vertex
operators as in \eqref{eq:conventions}.  Indeed, if
$X\in\cC_g$ and $v$ is an eigenvector of $kgk^{-1}$ with
eigenvalue $e^{2\pi ir/N}$, then $k^{-1}v$ is a $g$-eigenvector
with the same eigenvalue.  Thus the fractional modes and the
twisted-module identities are preserved.  The inverse functor is
$T_{k^{-1}}$, so $T_k$ is a $\C$-linear exact autoequivalence.
Applying two pullbacks gives
$Y_{T_kT_lX}(v,z)=Y_X(l^{-1}k^{-1}v,z)$, and the equality
$T_kT_l=T_{kl}$, as well as $T_1=\id$, holds literally on
objects and morphisms of the externally labeled category.  This
strictness concerns the \emph{underlying functors} only.

Each $T_k$ is equipped with a strong monoidal structure.  We use
the tensorators and unit maps
\[
 J^k_{X,Y}:T_kX\bo T_kY\xrightarrow{\sim}T_k(X\bo Y),
 \qquad j_k:\mathbf1\xrightarrow{\sim}T_k\mathbf1,
\]
where $\mathbf1$ in categorical formulas denotes the tensor
unit $V$.  On $V$ the canonical unit comparison of
\cite{Crossed} is $j_k(v)=k^{-1}v$.  In general neither
$J^k$ nor $j_k$ is the identity.  Write
$a_{X,Y,Z}:(X\bo Y)\bo Z\to X\bo(Y\bo Z)$ for the associator,
and $l_X:\mathbf1\bo X\to X$ and
$r_X:X\bo\mathbf1\to X$ for the unitors.  The tensorators
and unit maps are natural isomorphisms and obey
\begin{equation}\label{eq:actioncoherence}
 J^{kl}_{X,Y}=T_k(J^l_{X,Y})J^k_{T_lX,T_lY},
 \qquad j_{kl}=T_k(j_l)j_k,\qquad J^1=\id,\quad j_1=\id.
\end{equation}
Compositions are read from right to left.  Both sides of the
first identity belong to
$\Hom(T_{kl}X\bo T_{kl}Y,T_{kl}(X\bo Y))$; both
sides of the second belong to
$\Hom(\mathbf1,T_{kl}\mathbf1)$.  The associativity and
left- and right-unit conditions for each strong monoidal functor
are, with the same tensorator direction,
\begin{align*}
&T_k(a_{X,Y,Z})J^k_{X\bo Y,Z}(J^k_{X,Y}\bo\id_{T_kZ})\notag\\
&\qquad=J^k_{X,Y\bo Z}(\id_{T_kX}\bo J^k_{Y,Z})
 a_{T_kX,T_kY,T_kZ},\\
&T_k(l_X)J^k_{\mathbf1,X}(j_k\bo\id_{T_kX})
 =l_{T_kX},\notag\\
&T_k(r_X)J^k_{X,\mathbf1}(\id_{T_kX}\bo j_k)
 =r_{T_kX}.
\end{align*}
The first identity compares morphisms
$(T_kX\bo T_kY)\bo T_kZ\to T_k(X\bo(Y\bo Z))$;
the unit identities have domains $\mathbf1\bo T_kX$
and $T_kX\bo\mathbf1$, respectively.  In the general
formalism the action has monoidal compositors
$T_kT_l\Rightarrow T_{kl}$; here they are identity natural
transformations, without imposing strict monoidality on $T_k$.

For a homogeneous $X\in\cC_g$ and any $Y\in\cC_G$, the
crossed braiding is a natural isomorphism
$c_{X,Y}:X\bo Y\xrightarrow{\sim}T_gY\bo X$.
It is defined componentwise when $Y$ is a direct sum and is
extended additively in the first argument: for $X=\bigoplus_gX_g$, its target is the direct sum $\bigoplus_g(T_gY\bo X_g)$, with each summand acted on by the corresponding functor $T_g$.  If $Y\in\cC_h$, both endpoints
have degree $gh$, because
$(ghg^{-1})g=gh$.  The action-equivariance and two crossed
hexagons are the following equations:
\begin{align}
&T_k(c_{X,Y})J^k_{X,Y}
 =J^k_{T_gY,X}c_{T_kX,T_kY},\label{eq:crossedequiv}\\
&(\id_{T_gY}\bo c_{X,Z})a_{T_gY,X,Z}
 (c_{X,Y}\bo\id_Z)\notag\\
&\quad=a_{T_gY,T_gZ,X}
 ((J^g_{Y,Z})^{-1}\bo\id_X)c_{X,Y\bo Z}a_{X,Y,Z},
 \label{eq:crossedhexone}\\
&(c_{X,T_hZ}\bo\id_Y)a^{-1}_{X,T_hZ,Y}
 (\id_X\bo c_{Y,Z})\notag\\
&\quad=a^{-1}_{T_{gh}Z,X,Y}c_{X\bo Y,Z}a^{-1}_{X,Y,Z}.
 \label{eq:crossedhextwo}
\end{align}
In the first identity the common source is $T_kX\bo T_kY$
and the common target is $T_k(T_gY\bo X)$, using
$T_{kgk^{-1}}T_k=T_kT_g$.  For $X\in\cC_g$,
$Y\in\cC_h$ and arbitrary $Z$, the two hexagons have,
respectively, source and target
\[
 (X\bo Y)\bo Z\ \longrightarrow\ T_gY\bo(T_gZ\bo X),
 \qquad
 X\bo(Y\bo Z)\ \longrightarrow\ (T_{gh}Z\bo X)\bo Y.
\]
The inverse $(J^g_{Y,Z})^{-1}$ in the first hexagon is
necessary to expand $T_g(Y\bo Z)$, and the second uses
$T_gT_h=T_{gh}$ without interchanging $g$ and $h$.

The corresponding crossed unit identities are
\[
 r_Yc_{\mathbf1,Y}=l_Y,\qquad
 l_X(j_g^{-1}\bo\id_X)c_{X,\mathbf1}=r_X
 \quad (X\in\cC_g).
\]
where the first identity uses $T_1=\id$.  They are the
unit comparisons of the canonical crossed commutors in
\cite{Crossed}; in particular, $j_g$ must not be discarded.
Equations \eqref{eq:crossedequiv}--\eqref{eq:crossedhextwo}
are the non-strict forms of \cite[Definition~4.41]{DGNO}
and \cite[Definition~8.24.1, (8.105)--(8.107)]{EGNO}, with
$J^k$ in the \emph{strong monoidal} direction and identity
underlying action compositors.  The sources use action
compositors and may suppress associators; the displayed
formulas retain both tensorators and associators.  Kirillov's
$G$-equivariant fusion convention \cite[Section~2]{Kirillov}
uses action functors $R_k$ and suppresses canonical coherence
isomorphisms; its additional pivotal and twist data are not part
of the bare braided $G$-crossed definition.

For $X\in\cC_g$ put $\operatorname{deg}(X)=g$.  Fix right
duals $X^\vee$ with
$\ev_X:X\bo X^\vee\to\mathbf1$ and
$\coev_X:\mathbf1\to X^\vee\bo X$, satisfying the two
triangle identities.  The right-dual transposition of a map
$f:A\to B$ is $f^\vee:B^\vee\to A^\vee$.
The action--dual comparison $b_{k,X}:T_k(X^\vee)\to(T_kX)^\vee$
will be uniquely defined by evaluations in
Lemma~\ref{lem:dualactioncomparison}.

\begin{definition}\label{def:twistaxioms}
A \emph{crossed ribbon twist} in our (positive-half-turn,
right-dual) convention is a natural collection of isomorphisms
$\theta_X:X\to T_gX$ for $X\in\cC_g$ such that
\begin{enumerate}[label=\textup{(\roman*)},leftmargin=*]
\item $\theta_{\mathbf1}=j_1$ (under the canonical unit
identifications), and
$\theta_{T_kX}=T_k(\theta_X)$ with
$T_{kgk^{-1}}T_k=T_kT_g$;
\item if $X\in\cC_g$, $Y\in\cC_h$ and $q=ghg^{-1}$, then
\begin{equation}\label{eq:defbalance}
\theta_{X\bo Y}=J^{gh}_{X,Y}
 (T_q\theta_X\bo T_g\theta_Y)
 c_{T_gY,X}c_{X,Y};
\end{equation}
\item for $X\in\cC_g$, with the action--right-dual comparison
$b_{g,X}:T_g(X^\vee)\to(T_gX)^\vee$ of
Lemma~\ref{lem:dualactioncomparison}, the dual-twist identity is
\begin{equation}\label{eq:categoricaldualtwistaxiom}
 \theta_{X^\vee}
 =T_{g^{-1}}(\theta_X^\vee)\,
  T_{g^{-1}}(b_{g,X}):
 X^\vee\longrightarrow T_{g^{-1}}X^\vee.
\end{equation}
The equivalent identity on the restricted contragredient $DX$
is proved in Proposition~\ref{prop:dual}.
\end{enumerate}
These are the typed forms of Kirillov's four conditions in
\cite[Lemma~2.3]{Kirillov}; the unit, covariance, balancing and
dual compatibility in the source correspond to (i), (i), (ii)
and (iii) here.  Kirillov writes the group action as $R_k$ and
suppresses monoidal constraints; replacing $R_k$ by our $T_k$
requires the displayed $J^k$, $j_k$ and duality comparisons.
When $G=\{1\}$, the crossed braiding and twist reduce to the
usual ribbon-category data.
\end{definition}

\begin{definition}\label{def:crossedmodular}
A \emph{braided $G$-crossed fusion category} is a fusion category
$\cC=\bigoplus_{g\in G}\cC_g$ with a monoidal action $T_k$ of $G$,
$T_k\cC_g=\cC_{kgk^{-1}}$, and natural crossed braidings
$c_{X,Y}:X\bo Y\to T_gY\bo X$ for $X\in\cC_g$, satisfying the
unit, action-equivariance and two crossed hexagon axioms.
The grading is \emph{faithful} if every $\cC_g$ is nonzero.
A \emph{$G$-crossed ribbon category} has, in addition, a rigid
structure and natural crossed twists
$\theta_X:X\to T_gX$ ($X\in\cC_g$), with the unit,
$G$-equivariance, crossed balancing and duality conditions stated
explicitly in Definition~\ref{def:twistaxioms} above.

For this paper, such a $G$-crossed ribbon fusion category is
\emph{$G$-crossed modular} if its braided ribbon
equivariantization $\cC^G$ is nondegenerate.  By \cite[Theorem~10.5]{Kirillov}, nondegeneracy of the equivariantization
is equivalent to invertibility of the extended $\widetilde S$-operator
of \cite[Definition~10.1 and (9.3)]{Kirillov}.  Kirillov calls this
a ``$G$-equivariant modular category''; here we use the
synonymous phrase ``$G$-crossed modular category'' to emphasize
the crossed braiding.  Unlike the weaker condition that only
$\cC_1$ be modular, this condition requires nondegeneracy of
$\cC^G$.
\end{definition}

More concretely, the underlying space of Kirillov's extended
Verlinde algebra has bigraded pieces
\[
 \widetilde{\mathcal V}_{k,g}
 =\bigoplus_{[X]\in\Irr(\cC_g)}\Hom_{\cC}(X,T_kX),
\]
where one representative of each simple is chosen and a summand
vanishes unless $T_kX\simeq X$.  The extended
$\widetilde S$-operator constructed from crossed braiding and
categorical traces in \cite[(9.3)]{Kirillov} exchanges
$\widetilde{\mathcal V}_{k,g}$ and
$\widetilde{\mathcal V}_{g^{-1},k}$.  By
\cite[Theorem~10.5]{Kirillov}, its invertibility is equivalent
to the completely intrinsic M\"uger-center criterion of
Definition~\ref{def:crossedmodular}; that criterion is what
we verify below.  We use this categorical criterion throughout.

\subsection{Ordinary and complete intertwining operators}\label{subsec:IOdefinitions}
\begin{definition}\label{def:ordinaryIO}
Let $X,Y,Z$ be ordinary $V$-modules.  Write $Z\{x\}$ for
formal $Z$-valued series with complex powers of $x$; as usual,
$x^\alpha$ is a formal symbol and distinct powers are compared
coefficientwise.  An \emph{ordinary intertwining operator of type
$\binom Z{X\;Y}$} is a linear map
$\mathcal Y:X\otimes Y\to Z\{x\}$, written
$\mathcal Y(a,x)b=\sum_{n\in\C}a^{\mathcal Y}_n b\,x^{-n-1}$,
which satisfies the following properties for $a\in X$, $b\in Y$
and $u\in V$:
\begin{enumerate}[label=\textup{(\roman*)},leftmargin=*]
\item \emph{Lower truncation}: for every $n\in\C$,
$a^{\mathcal Y}_{n+l}b=0$ for all sufficiently large
integers $l\geq0$ (with the bound allowed to depend on $a,b,n$).
\item The $L(-1)$-derivative identity
$\partial_x\mathcal Y(a,x)b=\mathcal Y(L_X(-1)a,x)b$.
\item The Jacobi identity
\begin{align}\label{eq:IOJacobi}
&x_0^{-1}\delta\!\left(\frac{x_1-x_2}{x_0}\right)
 Y_Z(u,x_1)\mathcal Y(a,x_2)b
-x_0^{-1}\delta\!\left(\frac{x_2-x_1}{-x_0}\right)
 \mathcal Y(a,x_2)Y_Y(u,x_1)b\notag\\
&\qquad=x_2^{-1}\delta\!\left(\frac{x_1-x_0}{x_2}\right)
 \mathcal Y(Y_X(u,x_0)a,x_2)b.
\end{align}
\end{enumerate}
Here $\delta(x)=\sum_{m\in\mathbb Z}x^m$ and each formal
binomial is expanded in nonnegative powers of its second variable
as prescribed by the usual formal-variable convention.  This is
the Jacobi-identity definition of \cite[Definition~5.4.1]{FHL};
we permit complex rather than only rational powers in the
ambient notation $Z\{x\}$, as in the generalized formulation.
The $L(0)$-bracket follows from the Jacobi identity and derivative
property \cite[(5.4.12)--(5.4.14)]{FHL}.  Thus, for homogeneous
$a\in X_\alpha$ and $b\in Y_\beta$,
$a^{\mathcal Y}_n b\in Z_{\alpha+\beta-n-1}$; the nonzero
powers are constrained by the weights of $Z$, and for the
ordinary modules considered here they lie in finitely many
cosets of $\mathbb Z$.  Pre- and postcomposition with module
homomorphisms acts on intertwining operators in the evident
variance.  The resulting vector space is denoted
$\mathcal I_{\mathrm{HL}}\binom Z{X\;Y}$.
\end{definition}

\begin{definition}\label{def:completeIO}
Let $X,Y,Z$ be ordinary untwisted $V$-modules.  We call these operators \emph{complete Du--Huang intertwining
operators}; they are the twisted intertwining operators of
\cite[Definition~2.8]{DH} specialized to three untwisted modules.
It is a linear map $\mathcal Y:X\otimes Y\to Z\{x\}[\log x]$,
$\mathcal Y(a,x)b=\sum_{j=0}^{K}\sum_{n\in\C}
 a^{\mathcal Y}_{n,j}b\,x^{-n-1}(\log x)^j$,
with a finite logarithmic bound $K$, subject to these four
conditions:
\begin{enumerate}[label=\textup{(\roman*)},leftmargin=*]
\item For all $a,b,n,j$ as above,
$a^{\mathcal Y}_{n+l,j}b=0$ when $l\in\mathbb Z_{\geq0}$
is sufficiently large.
\item For $u\in V$, $a\in X$, $b\in Y$ and $z'\in Z'$, there
is a maximally extended multivalued analytic function
$f(z_1,z_2)$ on
$M^2=\{(z_1,z_2)\in(\C^\times)^2:z_1\ne z_2\}$,
with a preferred branch $f^e$ on $M_0^2$.  Here $M_0^2$
is obtained by cutting in the $z_1$-, $z_2$- and
$(z_1-z_2)$-planes along the positive real rays and taking
the upper-side boundary values, as in \cite[Section~2]{DH}.
With $\pi^M_\lambda$ the weight projection on $M$, the three
projected matrix-coefficient series are respectively
\begin{align*}
&\sum_\lambda\langle z',Y_Z(u,z_1)\pi^Z_\lambda
           \mathcal Y(a,z_2)b\rangle,\\
&\sum_\lambda\langle z',\mathcal Y(a,z_2)\pi^Y_\lambda
           Y_Y(u,z_1)b\rangle,\\
&\sum_\lambda\langle z',\mathcal Y(
           \pi^X_\lambda Y_X(u,z_1-z_2)a,z_2)b\rangle.
\end{align*}
They are absolutely convergent for, respectively,
$|z_1|>|z_2|>0$, $|z_2|>|z_1|>0$ and
$|z_2|>|z_1-z_2|>0$, and their sums equal the same
preferred branch $f^e$ on the corresponding subregions
\begin{align*}
&|\arg(z_1-z_2)-\arg z_1|<\pi/2,\\
&-3\pi/2<\arg(z_1-z_2)-\arg z_2<-\pi/2,\\
&|\arg z_1-\arg z_2|<\pi/2,
\end{align*}
within these three convergence chambers, respectively.
All powers and logarithms are evaluated on these prescribed
branches; continuation around the punctures gives the other
branches, as in \cite[Definition~2.8 and the following remark]{DH}.
\item For every integer $k\geq3$, $u_1,\ldots,u_{k-1}\in V$,
$a\in X$, $b\in Y$ and $z'\in Z'$, the projected multiple
product
\begin{equation*}
\sum_{\lambda_1,\ldots,\lambda_{k-1}}
\Big\langle z',Y_Z(u_1,z_1)\pi^Z_{\lambda_1}
 Y_Z(u_2,z_2)\pi^Z_{\lambda_2}\cdots
 Y_Z(u_{k-1},z_{k-1})\pi^Z_{\lambda_{k-1}}
 \mathcal Y(a,z_k)b\Big\rangle
\end{equation*}
is absolutely convergent in
$|z_1|>\cdots>|z_k|>0$ and admits a maximal
multivalued analytic continuation $f(z_1,\ldots,z_k)$
on $M^k=\{z_i\ne0,\ z_i\ne z_j\ (i\ne j)\}$.
For every fixed $z_k\in\C^\times$ and every choice of
$M_{ij}\in\mathbb Z_{\geq0}$ ($1\leq i<j\leq k-1$)
with $x^{M_{ij}}Y_V(u_i,x)u_j\in V[[x]]$, every
component-isolated singularity of the function
\[
\left.\prod_{1\leq i<j\leq k-1}(z_i-z_j)^{M_{ij}}
 f(z_1,\ldots,z_{k-1},\zeta)\right|_{\zeta=z_k}
\]
in the variables $z_1,\ldots,z_{k-1}$, of the form
$(z_1,\ldots,z_{k-1})-\beta=\delta$ with
$\beta\in\C^{k-1}$ and
$\delta\in\{0,\infty\}^{k-1}$, is regular in the
sense of Definition~\ref{def:corners}.  The factors cancel
possible OPE poles among the additional $V$-insertions;
no factor involving $z_k$ is imposed by this clause.
\item The $L(-1)$-derivative identity
$\partial_x\mathcal Y(a,x)b=\mathcal Y(L_X(-1)a,x)b$.
\end{enumerate}
Write $\mathcal I_{\mathrm{DH}}\binom Z{X\;Y}$ for this
space in the stated ordinary untwisted setting.  The
convergence and preferred-branch requirements distinguish this
definition from Definition~\ref{def:ordinaryIO}.  The
$L(0)$-commutator formula follows from the Du--Huang duality
property (\cite[Remark following Definition~2.8]{DH}); for
ordinary modules it forces the logarithmic coefficients of
positive degree to vanish, as reflected in \eqref{eq:weight}.
\end{definition}

\begin{definition}\label{def:corners}
We use the component-isolated and regular-singularity
terminology of \cite[Definition~2.6 and its preceding paragraph]{DH}.
Let $F$ be a multivalued analytic function in $m$ independent
variables.  A choice $\delta=(\delta_1,\ldots,\delta_m)$ with
$\delta_i\in\{0,\infty\}$ is component-isolated if the
domain of $F$ contains a region
$\Delta^\times(\delta,r)$ specified, for some $r_i>0$, by
$0<|\eta_i|<r_i$ when $\delta_i=0$ and
$|\eta_i|>r_i$ when $\delta_i=\infty$.
The same terminology applies after an invertible affine
change of independent variables
$\eta=(z_1,\ldots,z_m)A-\beta$,
$A\in\operatorname{GL}(m,\C)$; component isolation is tested
in the new independent $\eta$-coordinates, not in the old ones.

At a component-isolated singularity, take all local branches
$F^b$ with cuts along the positive real rays.  The singularity
is \emph{regular} if there exist a common integer $B\geq0$
and, for each $i$, an exponent set $D_i$ independent of the
branch, of the form
$D_i=\bigcup_{q=1}^{N_i}(r_{iq}+\mathbb Z_{\geq0})$
when $\delta_i=0$ or
$D_i=\bigcup_{q=1}^{N_i}(r_{iq}-\mathbb Z_{\geq0})$
when $\delta_i=\infty$, with $r_{iq}\in\C$, such that for
each branch $b$ one has an absolutely convergent expansion
\begin{equation*}
F^b(\eta)=
\sum_{a_1\in D_1}\cdots\sum_{a_m\in D_m}
\sum_{j_1,\ldots,j_m=0}^{B}
\alpha^{(b)}_{\boldsymbol a,\boldsymbol j}
\prod_{i=1}^{m}\eta_i^{a_i}(\log\eta_i)^{j_i}
\end{equation*}
on some $\Delta^\times(\delta,r_b)$ (which may depend on
$b$).  In the present ordinary untwisted setting, rational
functions have zero logarithmic degree, but the definition is
not restricted to rational functions.  Definition~2.8 of
\cite{DH} tests this regularity only after fixing its final
insertion $z_k$ and in the indicated translation coordinates;
Definition~2.6 itself permits general invertible affine
changes.  A singularity along a diagonal divisor need not be
component-isolated in the original coordinates.
\end{definition}

We shall prove
$\mathcal I_{\mathrm{HL}}\binom Z{X\;Y}
 =\mathcal I_{\mathrm{DH}}\binom Z{X\;Y}$.
The analytic requirements in Definition~\ref{def:completeIO}
are those to be checked, rather than conditions inferred from
a formal Jacobi identity alone.

\subsection{Canonical \texorpdfstring{$P(z)$}{P(z)} products and normalizations}
Let $X\in\cC_g$, $Y\in\cC_h$ and $Z\in\cC_{gh}$, and fix
$z\in\C^\times$ and $L\in\C$ with $e^L=z$.  The order of the
inputs corresponds to the marked points $(\infty,z,0)$.  In this
subsection an intertwining operator between twisted modules means an
operator satisfying the full Du--Huang twisted intertwining-operator
conditions \cite[Definition~2.8]{DH}; in particular these include
the multiple-insertion analytic conditions.  The term
\emph{complete} will retain this meaning for such operators.  This
is not the Jacobi-identity notion of ordinary untwisted intertwining
operator in Definition~\ref{def:ordinaryIO}.

For a twisted intertwining operator $\mathcal Y$ of type
$\binom Z{X\;Y}$, its evaluation at $(z,L)$ is the linear map
\[
 \operatorname{ev}_L(\mathcal Y)=I_{\mathcal Y,L}:
 X\otimes Y\longrightarrow\overline Z,
 \qquad
 \pi^Z_\gamma I_{\mathcal Y,L}(x\otimes y)
 =\left.\pi^Z_\gamma\mathcal Y(x,t)y
 \right|_{t^\rho=e^{\rho L},\,\log t=L}.
\]
Thus, when a projected coefficient contains $t^\rho(\log t)^j$,
its value is $e^{\rho L}L^j$.  The evaluation is taken separately in each finite-dimensional weight space $Z_\gamma$, with the resulting value in $\overline Z$.  Here and below the
ordinary sector hypothesis ensures that each fixed output-weight
projection has a single exponent and no logarithmic terms, as shown
below.  We write
\[
 \mathcal I_{P(z),L}\binom Z{X\;Y}
 :=\operatorname{im}\!\left(
 \operatorname{ev}_L:\mathcal I_{\mathrm{tw}}\binom Z{X\;Y}
 \longrightarrow\Hom_\C(X\otimes Y,\overline Z)\right),
\]
where $\mathcal I_{\mathrm{tw}}$ denotes the full Du--Huang twisted
operator space for the indicated sectors.  A \emph{twisted
$P(z)$-intertwining map} is precisely an element of this image, not
an arbitrary linear map into $\overline Z$.  For $g=h=1$, the usual
Huang--Lepowsky $P(z)$-maps arise from ordinary intertwining
operators; their comparison with the above complete operators is established
in Section~3.  These definitions
follow \cite[Section~4]{DH} and, in the untwisted case, \cite{HL}.

A \emph{$P(z)$-product} of $(X,Y)$ in $\cC_{gh}$ is a pair
$(U,I)$ with $U\in\cC_{gh}$ and
$I\in\mathcal I_{P(z),L}\binom U{X\;Y}$; its morphisms satisfy
$\overline f\circ I=I'$ for $f:U\to U'$.  A
\emph{$P(z)$-tensor product} is an initial such product
$(X\bo_{P(z)}Y,I^z_{X,Y})$.  Equivalently, for every
$W\in\cC_{gh}$ the map
\[
 \Hom_{\cC_{gh}}(X\bo_{P(z)}Y,W)
 \xrightarrow{\ \sim\ }
 \mathcal I_{P(z),L}\binom W{X\;Y},\qquad
 f\longmapsto\overline f\circ I^z_{X,Y}
\]
is a bijection natural in $W$.  Naturality in $X,Y$ gives the
bifunctorial morphism $f\bo_{P(z)}q$, characterized by
$\overline{f\bo_{P(z)}q}\,I^z_{X,Y}
 =I^z_{X',Y'}(f\otimes q)$. The universal pair is defined by the
property in \cite[Section~4]{DH} and is unique up to the unique
isomorphism preserving its universal map. Xu's construction supplies
its existence in every twisted sector and ensures that the representing
operators satisfy the \emph{complete} analytic conditions
\cite[Theorem~4.3, equation~(4.3)]{Crossed}.

We choose the representative $X\bo Y=X\bo_{P(1)}Y$ and its
universal map $I^1_{X,Y}$ with $\log1=0$.  The inverse evaluation
correspondence in the ordinary twisted sectors determines a unique
complete formal operator
\[
 \mathcal Y_{X,Y}:X\otimes Y\longrightarrow
 (X\bo Y)\{t\}[\log t],\qquad
 \operatorname{ev}_{0}(\mathcal Y_{X,Y})=I^1_{X,Y};
\]
its evaluated values lie in $\overline{X\bo Y}$, not in the
algebraic tensor-product module in general.  Explicitly, for
$x\in X_\alpha$, $y\in Y_\beta$ and any target weight $\gamma$,
the $L(0)$-commutator identity
\[
 L_{X\bo Y}(0)\mathcal Y_{X,Y}(x,t)y
 -\mathcal Y_{X,Y}(x,t)L_Y(0)y
 =\mathcal Y_{X,Y}(L_X(0)x,t)y
   +t\partial_t\mathcal Y_{X,Y}(x,t)y
\]
shows that, writing the projected formal series as
$\sum_\rho t^\rho P_\rho(\log t)$, each coefficient satisfies
$P_\rho'(s)+(\rho-\gamma+\alpha+\beta)P_\rho(s)=0$.
A polynomial solution is zero unless $\rho=\gamma-\alpha-\beta$,
in which case it is constant.  Hence evaluation at one selected
logarithm is injective on these ordinary-module operator spaces;
this is the weightwise recovery used in the universal-property
construction of \cite[Sections~2 and~4]{Crossed}.The resulting weightwise formula is stated in \eqref{eq:weight} below.
The corresponding logarithmic degree is zero; for generalized
(nonsemisimple) $L(0)$-modules, polynomial logarithmic factors from
the nilpotent parts can occur.

For another $(z,L)$, the chosen universal $P(z)$-product is
identified with $X\bo Y$ only by specified representing
isomorphisms or parallel transport.  If $\gamma$ is a path from
$z_1$ to $z_2$ in $\C^\times$, continue the logarithm used for
the universal operator at $z_2$ backwards along $\gamma$ to a
value $L_1^\gamma$ over $z_1$.  The universal property determines
\[
 \mathsf T_\gamma:X\bo_{P(z_1)}Y\xrightarrow{\sim}
 X\bo_{P(z_2)}Y,\qquad
 \overline{\mathsf T_\gamma}\,I^{z_1}_{X,Y}
 =\operatorname{ev}_{L_1^\gamma}
   (\mathcal Y^{z_2}_{X,Y}),
\]
where $\mathcal Y^{z_2}_{X,Y}$ is the formal operator associated
with the chosen universal map at $z_2$.  Transport depends on the
endpoint-fixed homotopy class of $\gamma$; opposite and concatenated
paths give inverse and composite transports.  Winding may produce
nontrivial monodromy.  In the common representatives of
\cite[Proposition~9.6, equation~(9.28)]{Crossed}, transport
along the positive real interval with real logarithms is the
identity.  These transport conventions agree with
\cite[Section~4, parallel transport]{DH}.

For homogeneous $X\in\cC_g$ and $Y\in\cC_h$, the crossed
commutor $c_{X,Y}:X\bo Y\to T_gY\bo X$ is the Du--Huang
$P(1)$ crossed commutativity isomorphism followed by transport
from $-1$ to $1$ through the upper half-plane, as fixed in \cite[Sections~2 and~9]{Crossed}.
The target $T_gY\bo X$ has degree $(ghg^{-1})g=gh$.
With $y$ viewed as the same underlying vector of $T_gY$, its
normalization is the identity of complete formal operators
\begin{equation}\label{eq:comm}
\overline{c_{X,Y}}\mathcal Y_{X,Y}(x,z)y
=e^{zL(-1)}\mathcal Y_{T_gY,X}(y,e^{\pi i}z)x,
\qquad \log(e^{\pi i}z)=\log z+\pi i.
\end{equation}
Both sides lie in $\overline{T_gY\bo X}$, with $L(-1)$ on the
right acting on this output module; the exponential is defined
weightwise, since each fixed target-weight projection receives
only finitely many contributions from lower weights.  The notation $e^{\pi i}z$ specifies analytic continuation through a
positive half-turn, with the continued logarithm fixed by
$\log(e^{\pi i}z)=\log z+\pi i$. We use the branch-sensitive
identity \cite[Proposition~9.6, equation~(9.30)]{Crossed} as an input
from Xu.

For $k\in G$, the pullback functors preserve the underlying
$L(0)$-graded vector spaces and use
$Y_{T_kW}(v,t)=Y_W(k^{-1}v,t)$.  The same canonical formal
operator, with $X,Y,X\bo Y$ replaced by their pullbacks, is a
complete operator of type
$\binom{T_k(X\bo Y)}{T_kX\;T_kY}$: its defining analytic
identities are those for the original operator with each
$V$-insertion $u$ replaced by $k^{-1}u$.  By the twisted
universal property this fixes the action tensorator
$J^k_{X,Y}:T_kX\bo T_kY\to T_k(X\bo Y)$ via
\begin{equation}\label{eq:pull}
\overline{J^k_{X,Y}}\mathcal Y_{T_kX,T_kY}(x,z)y
=\mathcal Y_{X,Y}(x,z)y,
\end{equation}
where the right side is regarded as a completed vector in
$\overline{T_k(X\bo Y)}$ through the unchanged underlying
weight-space identification.   It holds first
at $P(1)$ by the representing property and then coefficientwise
as a formal-operator identity by the injectivity of evaluation
proved above; the same identity is valid with every consistently
continued logarithm.  This is the tensorator normalization
\cite[equation~(4.7) and Proposition~8.5]{Crossed}.
Uniqueness of the representing maps, together with the strict
underlying identities $T_kT_l=T_{kl}$, gives
$J^{kl}_{X,Y}=T_k(J^l_{X,Y})J^k_{T_lX,T_lY}$ as in
\eqref{eq:actioncoherence}; the tensorators remain part of the
monoidal action data. The paper \cite{Crossed} also supplies the associator,
crossed hexagons and contragredient pairings used below.

For the chosen $P(1)$ representative, the preceding $L(0)$
calculation gives, for $x\in X_\alpha$ and $y\in Y_\beta$,
\begin{equation}\label{eq:weight}
 \pi_\gamma\mathcal Y_{X,Y}(x,z)y
 =z^{\gamma-\alpha-\beta}\,v_\gamma(x,y),\qquad
 v_\gamma(x,y)\in(X\bo Y)_\gamma,
\end{equation}
where $v_\gamma(x,y)=\pi_\gamma I^1_{X,Y}(x\otimes y)$ is
independent of $z$ and $z^\rho=e^{\rho L}$ on the chosen branch.
Finite-dimensionality of $(X\bo Y)_\gamma$ comes from ordinary
grading restriction, not from convergence in the full completion.

\section{The neutral sector and mixed rationality}\label{sec:neutral}

\subsection{A locality-to-rationality lemma}
Throughout this subsection a matrix coefficient containing a completed
module vector is defined by homogeneous projections.  For homogeneous
inputs the conformal-weight shift fixes the intermediate weight at
any specified tuple of mode indices, so the coefficient involves no
infinite sum of weight components.  We distinguish a coefficientwise
formal distribution from its expansion in a prescribed ordering of
formal variables.

\begin{lemma}\label{lem:mixedrational}
Let $X,Y,Z$ be ordinary grading-restricted $V$-modules and
$\mathcal Y$ an ordinary Jacobi-identity intertwining operator of
 type $\binom{Z}{X\;Y}$. Fix homogeneous $a\in X_\alpha$,
$b\in Y_\beta$, $z'\in(Z')_\gamma$ and
$u_i\in V_{d_i}$, $1\le i\le m$, and put
$\rho=\gamma-\alpha-\beta-\sum_{i=1}^m d_i$.
The nested matrix coefficient
\begin{equation*}
 F(\boldsymbol{x};t)=
 \big\langle z',Y_Z(u_1,x_1)\cdots Y_Z(u_m,x_m)
                    \mathcal Y(a,t)b\big\rangle
\end{equation*}
is the ordered formal expansion, for
$|x_1|>\cdots>|x_m|>|t|>0$, of
\begin{equation}\label{eq:mixedrational}
 t^\rho R(x_1/t,\ldots,x_m/t),\qquad
 R\in\C\bigl[s_1^{\pm1},\ldots,s_m^{\pm1},
 (s_i-1)^{-1},(s_i-s_j)^{-1}\ (i<j)\bigr].
\end{equation}
After fixing $\log t$, the ordered expansion is absolutely and
normally convergent on compact subsets of this chamber.  Its
individual scalar terms agree, without infinite regrouping, with
the homogeneous intermediate-weight terms in the multiple-product
series of \cite[Definition~2.8]{DH}. For fixed $t\ne0$ the only
possible finite polar divisors in the additional insertion variables
are $x_i=0$, $x_i=t$ and $x_i=x_j$.
\end{lemma}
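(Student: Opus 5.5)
The plan is induction on $m$, using only the Jacobi identity \eqref{eq:IOJacobi} and the VOA Jacobi identity; no analytic input from \cite{DH} is needed.

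\textbf{Homogeneity.} First record the $L(0)$-bracket consequences. For homogeneous inputs, each coefficient of $x_1^{-n_1-1}\cdots x_m^{-n_m-1}t^{-k-1}$ lies in a single weight space, and pairing with $z'\in(Z')_\gamma$ forces the exponents to satisfy $\sum_i(n_i+1)+(k+1)=\alpha+\beta+\sum d_i-\gamma$. Hence $F$ is homogeneous of total degree $\rho$. Since the $n_i$ are integers, all $t$-exponents lie in the single coset $\rho+\mathbb Z$ once we factor out $t^\rho$. Substituting $x_i=s_it$ then reduces the claim to showing that $t^{-\rho}F$ is the expansion of a rational function of the stated form.

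\textbf{Locality and poles.} Apply commutativity from the Jacobi identity (the $\operatorname{Res}_{x_0}$ of \eqref{eq:IOJacobi} multiplied by $x_0^{M}$). For each $i$, $(x_i-t)^{M_i}Y_Z(u_i,x_i)\mathcal Y(a,t)b=(x_i-t)^{M_i}\mathcal Y(a,t)Y_Y(u_i,x_i)b$, where $M_i$ is chosen from the lower truncation of $Y_X(u_i,x_0)a$. Similarly, VOA locality gives $(x_i-x_j)^{M_{ij}}[Y_Z(u_i,x_i),Y_Z(u_j,x_j)]=0$.

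Now multiply $F$ by $\prod_{i<j}(x_i-x_j)^{M_{ij}}\prod_i(x_i-t)^{M_i}$. The result can be rewritten with every $u_i$-insertion moved either right, onto $b$, or left, onto $z'$. Truncation of $Y_Y(u_i,x_i)b$ and of $Y_{Z'}$ acting on $z'$ (via the contragredient) shows that only finitely many nonnegative powers of $x_i$ occur on the right side and only finitely many negative powers on the left. The product is therefore a Laurent polynomial in the $x_i$, times $t^{\rho+\mathbb Z}$, with finitely many $t$-powers by homogeneity.

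Dividing back by the factors, expanded in the chamber ordering $|x_1|>\cdots>|x_m|>|t|$, recovers $F$. This follows the standard argument (Dong--Lepowsky, FHL Prop.~3.4.1): the formal series $F$ is lower-truncated in the appropriate ordered sense, so the division is unique in that iterated Laurent space. This yields \eqref{eq:mixedrational}. The denominators visibly involve only $s_i$, $s_i-1$ and $s_i-s_j$, which gives the final statement about polar divisors.

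\textbf{Main obstacle.} The hardest step is the careful bookkeeping with complex $t$-powers. We must show that multiplication by $(x_i-t)^{M_i}$ and the passage from formal commutativity to an honest Laurent polynomial are legitimate when $\mathcal Y$ has non-integral exponents. I would handle this by first writing $\mathcal Y(a,t)b=t^{\rho'}\cdot(\text{integral-power series})$ on each weight projection, using \eqref{eq:weight}-type homogeneity, so that all manipulations reduce to integral formal calculus.

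\textbf{Convergence and term matching.} Once $F$ is known to be the ordered expansion of a rational function in $s$, convergence follows from the standard expansion of $\prod(s_i-1)^{-1}(s_i-s_j)^{-1}$ in the region $|s_1|>\cdots>|s_m|>1$. This expansion is absolutely and normally convergent on compact subsets, and fixing $\log t$ fixes $t^\rho$.

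To match terms with \cite[Definition~2.8]{DH}, note that inserting $\pi^Z_{\lambda_j}$ between consecutive fields selects exactly one monomial for fixed mode indices, by the weight shift. Thus the sum over intermediate weights is the same rearrangement-free sum of the same scalar monomials.
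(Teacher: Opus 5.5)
Your proposal is correct and follows essentially the same route as the paper: clear with $Q=\prod_i(x_i-t)^{A_i}\prod_{i<j}(x_i-x_j)^{A_{ij}}$ from weak commutativity and locality, then bound the exponents of the single cleared array $QF$ on both sides by moving each $u_i$ to act on $b$ and to pair against $z'$, use homogeneity to write $QF=t^{\rho+D}P(x_1/t,\ldots,x_m/t)$, divide in the ordered iterated Laurent field, get convergence from binomial series, and match terms via the weight--mode bijection (the announced induction on $m$ is never actually used). One slip: moving $u_i$ right onto $b$ gives the \emph{lower} bound (finitely many negative powers of $x_i$), and moving it left against $z'$ (equivalently, lower-boundedness of $Z$, which is what the paper uses) gives the \emph{upper} bound; you have these reversed, but the finite-support conclusion is unaffected because both moves are made for each $i$ on the same cleared array.
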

\begin{proof}
We first identify the coefficient spaces used below.  The
$L(0)$-bracket, obtained from the intertwining-operator Jacobi
identity, gives
\[
 \pi^Z_\delta\mathcal Y(a,t)b
 =t^{\delta-\alpha-\beta}v_\delta,
 \qquad v_\delta\in Z_\delta,
\]
with no logarithmic terms.  The grading restriction bounds the
possible exponents of $t$ from below.  For each fixed exponent,
$v_\delta$ is an actual vector in a finite-dimensional weight
space, and the matrix coefficient of the $m$ ordinary vertex
operators on it is a well-defined nested formal Laurent series.
Thus, up to its fixed fractional $t$-power, $F$ belongs to the
ordered iterated Laurent field
$\C((x_1))((x_2))\cdots((x_m))((t))$, with the
expansion order $|x_1|>\cdots>|x_m|>|t|$.  In this space the expansions of
$(x_i-x_j)^{-1}$ and $(x_i-t)^{-1}$ are fixed, and multiplication
by these differences is injective.  The same need not hold in an
unrestricted space of bilateral formal distributions.

Choose $A_i\ge0$ with $u_i(n)a=0$ for all integers $n\ge A_i$.
The commutator form of the Jacobi identity for $\mathcal Y$
contains only the finitely many terms
$\mathcal Y(u_i(n)a,t)$, $0\le n<A_i$, multiplied by
derivatives of the formal delta distribution at $x_i=t$.
Multiplication by $(x_i-t)^{A_i}$ kills each of these terms,
giving, as a coefficientwise identity in the two expansion orders,
\begin{equation}\label{eq:weakmixed}
 (x_i-t)^{A_i}Y_Z(u_i,x_i)\mathcal Y(a,t)
  =(x_i-t)^{A_i}\mathcal Y(a,t)Y_Y(u_i,x_i).
\end{equation}
The exponent depends only on $u_i,a$, not on $b$ or any other
mode indices.  Likewise, choose $A_{ij}\ge0$ so that
$u_i(n)u_j=0$ for $n\ge A_{ij}$ (increasing $A_{ij}$ if
necessary). The vertex-algebra Jacobi identity gives, on both
$Y$ and $Z$,
\begin{equation}\label{eq:weakvv}
 (x_i-x_j)^{A_{ij}}Y_M(u_i,x_i)Y_M(u_j,x_j)
 =(x_i-x_j)^{A_{ij}}Y_M(u_j,x_j)Y_M(u_i,x_i),
 \qquad M=Y,Z.
\end{equation}
These are formal-distribution identities after multiplication by
the indicated polynomial, not assertions that the unmultiplied
series in opposite order are equal.  There are only finitely many
pairs. Set
\begin{equation*}
 Q(\boldsymbol{x},t)=
 \prod_i(x_i-t)^{A_i}\prod_{i<j}(x_i-x_j)^{A_{ij}},
 \qquad D=\deg Q.
\end{equation*}
In particular $Q$ is homogeneous of total degree $D$ in
$\boldsymbol{x},t$.

For clarity about changing order, we perform the following
coefficientwise construction before dividing by $Q$.  A coefficient
of $x_1^{p_1}\cdots x_m^{p_m}$ in $F$ is obtained by using exactly
the modes $u_i(-p_i-1)$; its $\mathcal Y$-output weight is forced
to be $\gamma-\sum_i(d_i+p_i)$, and hence its $t$-power is
$\rho-\sum_i p_i$.  Thus the coefficient is a single scalar,
with no implicit infinite sum.  Multiplication by a polynomial
is a finite convolution of these scalar coefficients.  Applying
\eqref{eq:weakmixed}--\eqref{eq:weakvv} to each adjacent
interchange, with the intervening modes fixed, therefore proves
that the \emph{cleared} coefficient array $QF$ agrees with the
array obtained from any ordering of the $u_i$'s, with a chosen
$u_i$ also moved past $\mathcal Y$ onto $Y$.  

We now prove bounds independent of every other mode index.
Fix $i$ and, in the cleared array, place $u_i$ to the far right,
after $\mathcal Y$.  The final field acts on the fixed vector
$b$ as $u_i(n)b$, which vanishes for every $n\ge N_i$ for
some $N_i$ depending only on $u_i,b$.  Before multiplication by
$Q$ the resulting powers $p_i=-n-1$ satisfy $p_i\ge-N_i$.
As $Q$ has no negative powers of $x_i$, the same lower bound
holds for $QF$, regardless of all other indices.

For the upper bound, place $u_i$ at the far left, keeping
$\mathcal Y$ to its right.  If $w\in Z_\lambda$ is any weight
component supplied by the remaining operators, then
$\langle z',u_i(n)w\rangle\ne0$ requires
$\lambda=\gamma-d_i+n+1$.  Since $Z$ is lower bounded,
there is $B_Z\in\mathbb R$ with
$\Re\lambda\ge B_Z$ for all its weights. Consequently
$n\ge B_Z-\Re\gamma+d_i-1,$
where $n\in\mathbb Z$.  Hence the powers $p_i=-n-1$
are uniformly bounded above; multiplication by $Q$ increases
an $x_i$-exponent by at most $\deg_{x_i}Q$.  This bound too is
independent of the remaining mode indices.  Repeating this
argument for every $i$ confines the support of $QF$ to a finite
Cartesian product of intervals in $\mathbb Z^m$.  Both bounds
refer to the \emph{same cleared coefficient array}, so they may
be imposed simultaneously.  It follows that $QF$ has only
finitely many monomials in $\boldsymbol{x}$.

For the remaining $t$-powers, $u_i(n)$ shifts weight by
$d_i-n-1=d_i+p_i$, while $\mathcal Y(a,t)b$ has a
weight-$\delta$ component proportional to
$t^{\delta-\alpha-\beta}$.  Thus a monomial
$\boldsymbol{x}^{\boldsymbol p}$ of $F$ has $t$-degree
$\rho-\sum_i p_i$.  Multiplication by any monomial of $Q$
raises total degree by $D$.  The finite support just proved
therefore yields
$QF=t^{\rho+D}P(x_1/t,\ldots,x_m/t), P\in\C[s_1^{\pm1},\ldots,s_m^{\pm1}].$
This is initially an equality of coefficient arrays.  Its left
side is also the product of $Q$ with the original ordered
iterated Laurent series.  In that ordered series space $Q$ is
invertible, so division uniquely gives the ordered expansion
of $t^{\rho+D}P/Q$.  Since
$Q=t^D\prod_i(s_i-1)^{A_i}\prod_{i<j}(s_i-s_j)^{A_{ij}}$,
this is precisely \eqref{eq:mixedrational}.  This use of the
ordered Laurent space is essential: in a bilateral distribution
space, $QH=0$ need not imply $H=0$.

We verify analytic convergence without appealing to formal
rationality as an analytic theorem.  In the stated chamber
$|t/x_i|<1$ and $|x_j/x_i|<1$ for $i<j$.  Expand each
$(x_i-t)^{-A_i}$ as
$x_i^{-A_i}(1-t/x_i)^{-A_i}$ and each
$(x_i-x_j)^{-A_{ij}}$ as
$x_i^{-A_{ij}}\sum_{n\ge0} \binom{A_{ij}+n-1}{n}(x_j/x_i)^n (A_{ij}>0),$
with the factor understood as $1$ when the exponent is $0$.
All of these binomial series converge absolutely and locally
uniformly in their respective ratios.  There are finitely many
factors, and $P$ is a finite Laurent polynomial, so their
product converges absolutely and normally on compact subsets
of the chamber.  The factor $t^\rho=e^{\rho\log t}$ uses the
fixed branch.  The uniqueness of the ordered formal expansion
proved above identifies \emph{each} scalar monomial coefficient
with that of the original matrix coefficient, rather than only
identifying a formal rational expression.

Finally, insert the projections in the multiple series of
\cite[Definition~2.8]{DH}.  Write $\lambda_0=\gamma$, and
let $\lambda_i$ be the intermediate weight immediately to the
right of $Y_Z(u_i,x_i)$.  Because the $u_i$ have integral
weights, a nonzero term with mode $n_i\in\mathbb Z$ obeys
$\lambda_{i-1}=\lambda_i+d_i-n_i-1, n_i=\lambda_i+d_i-1-\lambda_{i-1}.$
Hence the $m$ intermediate weights determine the $m$ modes
uniquely, and the modes recursively determine the weights,
with $\lambda_m$ specifying the single $t$-power of the
intertwining operator.  Each projection is onto the correct
finite-dimensional weight component, so each tuple gives
exactly the same scalar as its corresponding monomial in the
ordered expansion.  This is a bijection of nonzero scalar
terms, not a regrouping of several terms into one.  Their
absolute convergence therefore follows from that of the
ordered rational expansion.  Arbitrary (not necessarily
homogeneous) inputs are finite linear combinations of
homogeneous vectors.  Finally, the rational function has no
polar divisors outside the factors displayed in
\eqref{eq:mixedrational}; its dependence on the fixed $t$ may
carry the selected power branch but introduces no monodromy in
the additional variables.
\end{proof}

\begin{lemma}\label{lem:rationalcorners}
Fix $t\in\C^\times$. Let $R$ be a rational function of the
remaining $m$ variables. At every component-isolated corner in
the sense of \cite[Definition~2.6]{DH}, including one specified by
an invertible affine change of variables and any mixture of zero
and infinity components, $R$ has an absolutely convergent
regular-singularity expansion with integral exponents and no
logarithmic terms. The exponent sets can be chosen independently
of the branch. In particular, this applies to the component-isolated
translation corners required by \cite[Definition~2.8]{DH}.
\end{lemma}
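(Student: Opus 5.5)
The plan is to reduce everything to the local structure of a rational function near a point $\beta$ (or $\infty$) in the affine coordinates $\eta$. First, write $R=P/Q$ with $P,Q$ polynomials in the original variables $(z_1,\dots,z_m)$; after the invertible affine change $\eta=(z_1,\ldots,z_m)A-\beta$, both $P$ and $Q$ remain polynomials in $\eta$, so $R$ is a rational function of $\eta$. Because $R$ is single-valued, every branch $F^b$ equals $R$ itself. This gives branch independence of the exponent sets and the coefficients at once, and we may take the logarithmic bound $B=0$.

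Second, I use component isolation. The domain contains $\Delta^\times(\delta,r)=\prod_i\{0<|\eta_i|<r_i\}$ or $\{|\eta_i|>r_i\}$, so $Q$ has no zeros on this product of punctured discs and annuli at infinity. Each $\eta_i$ ranges over a punctured disc or a neighbourhood of $\infty$, which is a Reinhardt domain in $\eta_i$ or in $\eta_i^{-1}$. So $R$ is holomorphic on a product of annular (Reinhardt) domains, namely $\prod_i A_i$ with $A_i=\{0<|\eta_i|<r_i\}$ or $\{r_i<|\eta_i|<\infty\}$.

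Third, the multivariable Laurent theorem on such a product of annuli (iterated Cauchy integrals) gives an expansion $R=\sum_{\boldsymbol a\in\mathbb Z^m}\alpha_{\boldsymbol a}\prod_i\eta_i^{a_i}$. This expansion converges absolutely and uniformly on compact subsets, possibly after shrinking to $r_i'<r_i$ (respectively $r_i'>r_i$).

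The main obstacle is showing that the exponent sets have the required one-sided form: $a_i\in r_i-\mathbb Z_{\ge0}$-type bounded below when $\delta_i=0$, and bounded above when $\delta_i=\infty$. A full Laurent series on a punctured disc could have an essential singularity. I will rule this out by a growth argument using rationality. The function $Q(\eta)$ is a polynomial in each $\eta_i$ with coefficients polynomial in the others. On the product domain, after fixing the other variables on compact circles, I will bound $|R|\le C|\eta_i|^{-K}$ as $\eta_i\to0$ (and $\le C|\eta_i|^{K}$ as $\eta_i\to\infty$), uniformly for the other variables on their circles. This holds because $R$, as a rational function of one variable with polynomial dependence on parameters, has a pole of order at most $\deg_{\eta_i}Q$ at $0$ and growth at most $\deg_{\eta_i}P$ at $\infty$. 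For uniformity I would rather argue directly: multiplying $R$ by $\prod_i\eta_i^{K}$ (or $\eta_i^{-K}$ at infinity) with $K=\deg Q+\deg P$, one checks via the Cauchy coefficient formula that the $\eta_i$-coefficients vanish beyond $K$. The reason is that the coefficient, as a function of the remaining variables, is obtained from a one-variable Laurent expansion of a rational function of $\eta_i$ whose principal part has order at most $\deg_{\eta_i}Q$, and this bound is independent of the other coordinates. Hence $D_i=-K+\mathbb Z_{\ge0}$ for $\delta_i=0$ and $D_i=K-\mathbb Z_{\ge0}$ for $\delta_i=\infty$, with a single exponent coset $r_{i1}\in\mathbb Z$.

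Finally, this yields exactly the regularity expansion of Definition~\ref{def:corners}, with integral exponents, $B=0$, and no logarithms. The translation corners of \cite[Definition~2.8]{DH} are a special case, obtained with fixed $z_k=t$ and $A$ a translation/shear matrix. By Lemma~\ref{lem:mixedrational}, this applies to the functions arising there after clearing the factors $(z_i-z_j)^{M_{ij}}$.
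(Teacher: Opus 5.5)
Your proposal is correct and follows essentially the paper's route: after the affine change you expand by iterated Cauchy integrals on the product of punctured discs, bound the principal part via the one-variable pole order (at most $\deg_{\eta_i}Q$) and transfer that bound to the multivariable coefficients, and get branch independence from single-valuedness. The differences are minor: the paper inverts infinity components to $\xi_i=\eta_i^{-1}$ rather than bounding growth by $\deg_{\eta_i}P$, and it handles parameter values $\widehat\eta_i$ with $Q(\cdot,\widehat\eta_i)\equiv0$ by a density-plus-holomorphic-dependence argument, whereas your version needs $P/Q$ in lowest terms so that $Q(\cdot,\widehat\eta_i)$ is a nonzero polynomial for every $\widehat\eta_i$ in the polydisc.
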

\begin{proof}
Write $\eta=zA-\beta$, where $A\in\mathrm{GL}_m(\C)$,
and put $\xi_i=\eta_i$ if $\eta_i\to0$ and
$\xi_i=\eta_i^{-1}$ if $\eta_i\to\infty$.
All changes are rational and invertible on the relevant coordinate
tori. Thus the transformed function, denoted $f(\xi)$, belongs to
$\C(\xi_1,\ldots,\xi_m)$. Clearing negative powers in a
numerator and denominator gives
$f(\xi)=P(\xi)/Q(\xi)$ with $P,Q\in\C[\xi_1,\ldots,\xi_m]$
and $Q\ne0$. In particular, $f$ defines a meromorphic germ
at $\xi=0$ in the sense of a quotient of holomorphic germs;
this alone does not yet yield a coordinatewise Laurent expansion.

By component isolation, there are $\epsilon_i>0$ such that $f$
is holomorphic on the entire product of punctured discs
$\Delta^\times_\epsilon =\{\xi:0<|\xi_i|<\epsilon_i\text{ for every }i\}.$
Its multivariable Laurent coefficients are therefore the iterated
Cauchy integrals
\[
 a_{\boldsymbol n}=\frac{1}{(2\pi i)^m}
 \int_{|w_1|=s_1}\!\cdots\!\int_{|w_m|=s_m}
 \frac{f(\boldsymbol w)}
 {w_1^{n_1+1}\cdots w_m^{n_m+1}}\,
 dw_m\cdots dw_1,\qquad 0<s_i<\epsilon_i,
\]
independent of the radii. The corresponding Laurent series
converges absolutely and locally uniformly on
$\Delta^\times_\epsilon$. We show that its negative powers
are bounded uniformly in each variable, rather than assuming
this from meromorphy.

Set $d_i=\deg_{\xi_i}Q$. For fixed values of the other
coordinates outside the proper algebraic set on which
$Q(\xi_i,\widehat\xi_i)$ vanishes identically as a polynomial
in $\xi_i$, the one-variable rational function
$\xi_i\mapsto f(\xi_i,\widehat\xi_i)$ has at $\xi_i=0$ a
pole of order at most $d_i$. Its one-variable Laurent
coefficients with exponent $n_i<-d_i$ consequently vanish.
For any circle $|\xi_i|=s_i$ inside the punctured disc,
these coefficients are Cauchy integrals and vary holomorphically
with the remaining coordinates on their punctured polydisc.
Their vanishing on the indicated dense set therefore implies
vanishing everywhere. Integrating the resulting zero function
in the remaining variables shows that
$a_{\boldsymbol n}=0$ whenever some $n_i<-d_i$.
The multivariable Laurent series hence has the form
\begin{equation}\label{eq:rationalcorner}
 f(\xi)=\xi_1^{-N_1}\cdots\xi_m^{-N_m}H(\xi),
 \qquad N_i=d_i\ge0,\quad
 H\in\C\{\xi_1,\ldots,\xi_m\}.
\end{equation}
Indeed the Laurent series of $H=\prod_i\xi_i^{N_i}f$
has only nonnegative powers, so its power series extends
holomorphically across every coordinate hyperplane. It is
absolutely convergent on a sufficiently small polydisc and
normally convergent on compact subsets. Multiplication by
$\xi^{-\boldsymbol N}$ preserves absolute and normal convergence
on the corresponding punctured polydisc.

This also verifies the polar-divisor step directly. The
factorization in \eqref{eq:rationalcorner} shows that the polar
divisor germ is supported on coordinate hyperplanes. Equivalently,
if a reduced denominator in the local ring
$\C\{\xi_1,\ldots,\xi_m\}$ had an irreducible non-coordinate
polar factor through the origin, its hypersurface would have
points with every $\xi_i\ne0$ arbitrarily near the origin.
At a generic such point the coprime numerator does not vanish
on that factor, contradicting holomorphy on
$\Delta^\times_\epsilon$. The irreducibility assertion follows
from the local analytic Nullstellensatz and the fact that an
irreducible hypersurface germ contained in a finite union of
coordinate hyperplanes is one of those hyperplanes; the
coordinate-factor conclusion also follows without these results
from the preceding Laurent-coefficient argument. In particular,
no additional codimension-two obstruction is left unaddressed.

Expand $H(\xi)=\sum_{\boldsymbol q\in\mathbb Z_{\ge0}^m}
 h_{\boldsymbol q}\xi^{\boldsymbol q}$. For a zero component,
the exponents in the original variable $\eta_i$ belong to
$-N_i+\mathbb Z_{\ge0}$; for an infinity component they belong to
$N_i-\mathbb Z_{\ge0}$, since $\xi_i=\eta_i^{-1}$.
These sets, chosen simultaneously for all components, have
exactly the forms required in \cite[Definition~2.6]{DH}.
The series is absolutely convergent in a sufficiently small
mixed zero/infinity region, with logarithmic degree $K=0$.
A rational function remains single-valued under the coordinate
changes, so the same exponent sets and coefficients apply to
all its branches. Finally, when $R$ arises from
Lemma~\ref{lem:mixedrational}, fixing the nonzero intertwining
coordinate $t$ makes $t^\rho$ a scalar on each selected
$\log t$ branch. Multiplication by this scalar, or by any
of the integral $V$--$V$ pole-clearing polynomials in
\cite[Definition~2.8]{DH}, preserves rationality in the
additional coordinates. The preceding argument then supplies
branch-independent integral exponent sets and zero logarithmic
degree for the resulting function. Thus every component-isolated
translation corner tested there is covered.
\end{proof}

\subsection{Equality of the two intertwining-operator spaces}
\begin{theorem}\label{thm:neutralanalytic}
For a strongly rational $V$ and all ordinary $V$-modules $X,Y,Z$,
every ordinary intertwining operator satisfies the complete
Du--Huang analytic requirements of \cite[Definition~2.8]{DH}.
In particular, for every $m\geq2$ and every choice of additional
states of $V$, the prescribed nested correlation functions converge
absolutely, extend to the full configuration space and have regular
component-isolated corners after multiplication by the prescribed
$V$--$V$ difference factors. Moreover,
$\mathcal I_{\mathrm{DH}}\binom Z{X Y} =\mathcal I_{\mathrm{HL}}\binom Z{X Y}.$
\end{theorem}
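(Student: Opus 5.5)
The plan is to prove the two inclusions separately; the substantive one is $\mathcal I_{\mathrm{HL}}\subseteq\mathcal I_{\mathrm{DH}}$.

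\emph{Easy inclusion.} For $\mathcal I_{\mathrm{DH}}\subseteq\mathcal I_{\mathrm{HL}}$, start from a complete operator. The $L(0)$-commutator argument preceding \eqref{eq:weight} removes all positive-degree logarithmic coefficients, because $X,Y,Z$ are ordinary. The $L(-1)$-derivative property and lower truncation are then common to both definitions. The Jacobi identity \eqref{eq:IOJacobi} follows from the two-point duality clause (ii) by the standard argument, as in the formal direction of Proposition~\ref{prop:modulecomparison} with $g=1$. The three absolutely convergent expansions are expansions of one single-valued (up to the fixed $z_2$-power) function $z_2^{\rho}R(z_1/z_2)$ in the three chambers. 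Taking residues against $z_0^{-1}\delta$ kernels, or invoking \cite[Theorem~2.10]{HuangYangAssoc} in the untwisted case, yields the formal Jacobi identity.

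\emph{Hard inclusion, conditions (i), (ii), (iv).} For $\mathcal I_{\mathrm{HL}}\subseteq\mathcal I_{\mathrm{DH}}$, I verify the conditions of Definition~\ref{def:completeIO} for an ordinary intertwining operator $\mathcal Y$. Conditions (i) and (iv) are immediate. For (ii), apply Lemma~\ref{lem:mixedrational} with $m=1$. The product matrix coefficient is the expansion of $t^\rho R(x_1/t)$, where $R$ has poles only at $s=0,1,\infty$, and it converges absolutely for $|z_1|>|z_2|>0$.

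The other two series are handled the same way. The commutativity form \eqref{eq:weakmixed} shows that the cleared array for $\langle z',\mathcal Y(a,z_2)Y_Y(u,z_1)b\rangle$ coincides with that of the product. Its ordered expansion in $\C((t))((x_1))$ is therefore the expansion of the same rational function in $|z_2|>|z_1|$. For the iterate, the Jacobi identity gives associativity with the same clearing polynomial. Hence $\sum_\lambda\langle z',\mathcal Y(\pi_\lambda Y_X(u,z_1-z_2)a,z_2)b\rangle$ is the expansion of $z_2^\rho R$ in $|z_2|>|z_1-z_2|$. Its convergence follows by the same finite-binomial argument, with a bijection of scalar terms as in the last paragraph of the proof of Lemma~\ref{lem:mixedrational}.

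I then check the preferred branches. The only multivalued factor is $z_2^{\rho}$, with $\log z_2$ chosen consistently. The factor $z_1^{\ldots}$ coming from $(z_1-z_2)$ expansions is integral, since $V$-modes are integral. So matching on the prescribed argument subregions reduces to the statement that $\log z_2$ is the same principal-cut value in all three chambers. Maximal continuation to $M^2$ is just $z_2^\rho R(z_1/z_2)$.

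\emph{Condition (iii).} For every $k\ge3$, Lemma~\ref{lem:mixedrational} with $m=k-1$ gives absolute convergence in $|z_1|>\cdots>|z_k|>0$. It also gives the continuation $z_k^\rho R(z_1/z_k,\dots,z_{k-1}/z_k)$, whose polar divisors are only $z_i=0$, $z_i=z_k$ and $z_i=z_j$. This function is analytic on $M^k$, with monodromy only in $z_k$. Fixing $z_k$ and multiplying by $\prod(z_i-z_j)^{M_{ij}}$ leaves a rational function of $z_1,\dots,z_{k-1}$. Lemma~\ref{lem:rationalcorners} then gives regularity at every component-isolated corner, including affinely transformed translation corners, with integral branch-independent exponents and no logarithms.

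\emph{Main obstacle.} I expect the main obstacle to be the iterate chamber in (ii). There I must show that the projected iterate series equals, term by term, the expansion of the \emph{same} rational function given by Lemma~\ref{lem:mixedrational}, and not merely that it is formally equal after an infinite regrouping. I would first derive weak associativity $(x_0+x_2)^{A}\mathcal Y(Y_X(u,x_0)a,x_2)b=(x_0+x_2)^{A}Y_Z(u,x_0+x_2)\mathcal Y(a,x_2)b$ from \eqref{eq:IOJacobi}, choosing $A$ depending on $u,b$. I would then repeat the finite-support and clearing argument in the ordered field $\C((x_2))((x_0))$. Finally I would use weight bookkeeping to identify each projected summand with a single monomial coefficient.
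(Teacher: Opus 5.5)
Your proof of $\mathcal I_{\mathrm{HL}}\subseteq\mathcal I_{\mathrm{DH}}$ is essentially the paper's. You use Lemma~\ref{lem:mixedrational} with $m=1$ for the product, weak commutativity and weak associativity together with uniqueness of ordered expansions for the reversed product and the iterate, and the observation that only $z_2^\rho$ carries a branch. For clause (iii) you use Lemma~\ref{lem:mixedrational} with $m=k-1$ followed by Lemma~\ref{lem:rationalcorners}. One small point: associativity does not hold ``with the same clearing polynomial''. The clearing factor there is $(x_0+x_2)^{A'}=z_1^{A'}$ with $A'$ depending on $u,b$, but your final paragraph states this correctly.

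The gap is in the direction you call easy.
\begin{itemize}
\item Clause (ii) of Definition~\ref{def:completeIO} only gives a common multivalued analytic function $f$ on $M^2$ whose preferred branch the three projected series represent.
\item The claim that $f=z_2^{\rho}R(z_1/z_2)$, with $R$ rational and poles only at $0,1,\infty$, is exactly what must be proved before any residue or delta-function calculation can produce \eqref{eq:IOJacobi}. You assert it without argument.
\item The references you offer instead do not supply it. Proposition~\ref{prop:modulecomparison} and \cite[Theorem~2.10]{HuangYangAssoc} concern the vertex operators $Y^g_M$ of a single (twisted) module. They say nothing about an intertwining operator among three different modules.
\end{itemize}

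The missing step is the one the paper gives.
\begin{enumerate}
\item Take the $L(0)$-bracket from the remark after \cite[Definition~2.8]{DH}, which derives it from duality. You also need this before your log-removal step. It gives $(z_1\partial_{z_1}+z_2\partial_{z_2})f=\rho f$.
\item Fix $z_2$. The reversed product, the iterate and the product are Laurent expansions in $z_1$, $z_1-z_2$ and $z_1^{-1}$, respectively. Each has integral exponents and only finitely many polar terms. This uses $u_nb=0$ and $u_na=0$ for $n\gg0$, and, at infinity, that $z'$ has fixed weight and $Z$ is lower bounded.
\item Hence $f(\cdot,z_2)$ has trivial local monodromy around $0$, $z_2$ and $\infty$. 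These loops generate $\pi_1(\mathbb P^1\setminus\{0,z_2,\infty\})$, so $f(\cdot,z_2)$ is single-valued and meromorphic on $\mathbb P^1$, hence rational.
\item Homogeneity then gives the form $z_2^\rho R(z_1/z_2)$.
\item The rationality--commutativity--associativity calculation of \cite[Proposition~3.4.1]{FHL}, applied to these mixed matrix coefficients, yields the Jacobi identity.
\end{enumerate}
With this argument inserted, your proof coincides with the paper's.
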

\begin{proof}
We first prove the inclusion from right to left. Let
$\mathcal Y\in\mathcal I_{\mathrm{HL}}\binom Z{X\;Y}$.
Its coefficientwise lower truncation and $L(-1)$-derivative
property are precisely the first and fourth conditions of
\cite[Definition~2.8]{DH}. For homogeneous
$a\in X_\alpha$, $b\in Y_\beta$, the $L(0)$-bracket
\cite[(5.4.12)--(5.4.14)]{FHL} implies
$\pi^Z_\gamma\mathcal Y(a,x)b
=x^{\gamma-\alpha-\beta}v_\gamma(a,b)$.
Thus its exponents have the requisite weight-difference support,
and it has logarithmic degree zero.

We verify the three parts of the one-$V$ duality condition,
including their prescribed branches. Take homogeneous $u\in V_d$
and $z'\in(Z')_\gamma$. The case $m=1$ of
Lemma~\ref{lem:mixedrational}, with $x_1=z_1$ and $t=z_2$,
gives a rational function $R$ such that the product series with
its intermediate $Z$-weight projection is the convergent expansion
of
$f(z_1,z_2)=e^{\rho\ell(z_2)}R(z_1/z_2), \rho=\gamma-\alpha-\beta-d,$
where $\ell(z_2)$ is the logarithm on the preferred branch of
\cite[Definition~2.8]{DH}. Here $R$ has possible poles only
at $z_1/z_2=0,1,\infty$. For fixed homogeneous data,
$f$ consequently has a multivalued analytic continuation on
$M^2$, arising only from continuation of $z_2^\rho$.
Its germs along paths in $M^2$ provide the maximal analytic
continuation of the initial preferred germ.

The other two matrix coefficients are not obtained simply by
rearranging a convergent infinite series. The ordinary
intertwining-operator Jacobi identity
\cite[Definition~5.4.1, (5.4.4)]{FHL} gives weak commutativity
between $Y_Z(u,z_1)$ and $\mathcal Y(a,z_2)$ and the
product--iterate identity
\cite[(5.4.17), (5.4.24)]{FHL}.
Clear a sufficiently high power of $z_1-z_2$ in these formal
identities. Uniqueness of the corresponding ordered expansions
identifies the reversed product and iterate, with the
$Y$- and $X$-weight projections in
Definition~\ref{def:completeIO}, as the expansions of this
\emph{same} $f$ in the respective variables
$(z_1,z_2)$ and $(z_1-z_2,z_2)$.
In the second chamber $|z_2|>|z_1|>0$ expand
$(z_1-z_2)^{-1}$ in $z_1/z_2$; in the third chamber
$|z_2|>|z_1-z_2|>0$ expand $z_1^{-1}$ in
$(z_1-z_2)/z_2$. The remaining possible denominators are
monomials. Each expansion is a finite Laurent sum multiplied
by finitely many geometric or binomial series, and hence is
absolutely and locally normally convergent. The $L(0)$ weight
relations identify its individual terms with the required
intermediate-weight projections, without an infinite regrouping.
On the three subregions specified in
Definition~\ref{def:completeIO}(ii), the formal substitutions
use the preferred determinations of $\ell(z_2)$ and the
associated logarithms of $z_1$ and $z_1-z_2$. Thus all three
sums represent the same preferred analytic germ; its continuation
along paths gives the other branches. This argument uses neither
an adjoint intertwining operator nor the additional assumptions
of \cite[Theorem~5.6.2]{FHL}.

For the multiple-$V$ condition, first take homogeneous
$u_i\in V$, $a,b$ and $z'$. Apply
Lemma~\ref{lem:mixedrational} with $m=k-1$ for each $k\geq3$.
Its proof identifies the $k-1$ intermediate output-weight
projections of Definition~\ref{def:completeIO}(iii) term by term
with the modes in the ordered expansion. In particular the
\emph{projected} multiple series converges absolutely in
$|z_1|>\cdots>|z_k|>0$. The analytic germ has the form
$z_k^{\rho} R(z_1/z_k,\ldots,z_{k-1}/z_k),$
with the $z_k$ power taken on the selected logarithmic branch.
Every possible pole of $R$ lies among $z_i=0$,
$z_i=z_k$, and $z_i=z_j$ $(i<j<k)$; after substituting the
nonzero variable $z_k$, these are precisely divisors excluded
from $M^k$. Thus the expression, with analytic continuation
of $z_k^\rho$ around $0$, gives a multivalued analytic
continuation throughout $M^k$. It is the maximal continuation of the germ defined by the convergent multiple series, with the branch of $z_k^\rho$ retained throughout.

Now fix $z_k\ne0$, including one logarithmic branch, as
required in \cite[Definition~2.8(3)]{DH}. For \emph{every}
choice of nonnegative integers $M_{ij}$ for $1\leq i<j\leq k-1$
with $x^{M_{ij}}Y_V(u_i,x)u_j\in V[[x]]$, multiplying the
continued function by
$\prod_{1\leq i<j\leq k-1}(z_i-z_j)^{M_{ij}}$
leaves a rational function of the additional $k-1$ positions,
up to the fixed nonzero scalar $z_k^\rho$. By
Lemma~\ref{lem:rationalcorners} every component-isolated
translation corner required in Definition~2.8 has an
absolutely convergent regular-singularity expansion, with
integer exponent sets common to all branches of the
additional variables and logarithmic degree zero. The fixed
scalar can change with the chosen branch of $z_k$ but does
not change these exponent sets. No common choice of leading
exponents as $z_k$ or the admissible $M_{ij}$ vary is asserted
or required. Finite linearity in the input vectors proves the
claims for arbitrary, not necessarily homogeneous, inputs.
This establishes
$\mathcal I_{\mathrm{HL}}\binom Z{X\;Y}
\subseteq\mathcal I_{\mathrm{DH}}\binom Z{X\;Y}$.

For the reverse inclusion, take
$\mathcal Y\in\mathcal I_{\mathrm{DH}}\binom Z{X\;Y}$.
The lower-truncation and derivative axioms already have the
ordinary form. The remark immediately following
\cite[Definition~2.8]{DH} derives the $L(0)$-commutator formula
from analytic duality (the conformal vector is fixed by all
twists). To make the absence of logarithms explicit, take
$a\in X_\alpha$, $b\in Y_\beta$ and project to
$Z_\gamma$. For the coefficient of $x^{-n-1}$ write
$P_{n,\gamma}(y)=\sum_{j=0}^K
\pi^Z_\gamma(a^{\mathcal Y}_{n,j}b)y^j$ with $y=\log x$.
The $L(0)$-bracket gives the polynomial differential equation
$P'_{n,\gamma}(y)= (\gamma-\alpha-\beta+n+1)P_{n,\gamma}(y).$
A polynomial solution is zero unless
$\gamma-\alpha-\beta+n+1=0$; in the latter case it is
constant. Therefore all coefficients of $(\log x)^j$ for
$j>0$ vanish, and each remaining term has the ordinary
weight-difference exponent. In particular
$\mathcal Y:X\otimes Y\to Z\{x\}$.

We now obtain the full ordinary Jacobi identity at the level
of matrix coefficients, independently of a commutator-only
criterion. Fix homogeneous $a,b,u,z'$ as above and the three
absolutely convergent sums from Definition~\ref{def:completeIO}(ii).
The $L(0)$-bracket implies simultaneous homogeneity:
$(z_1\partial_{z_1}+z_2\partial_{z_2})f=\rho f$,
where $\rho=\gamma-\alpha-\beta-d$.
For fixed $z_2\ne0$ and its logarithmic determination,
the three convergent expansions show that $f$ is meromorphic
in $z_1$ near $0$, $z_2$ and $\infty$, with finite pole
orders at each point. Indeed near $0$ the reversed product
has only finitely many negative powers of $z_1$ by
$u_n b=0$ for large $n$; near $z_2$ the iterate has only
finitely many negative powers of $z_1-z_2$ by
$u_n a=0$ for large $n$; and near infinity the product
has only finitely many positive powers of $z_1$, because
$z'$ has fixed weight and the output $Z$ is lower bounded.
The expansions have integral powers in the respective
local variable, so they have no local monodromy as $z_1$
encircles these punctures. Since these loops generate the
fundamental group of $\mathbb P^1\setminus\{0,z_2,\infty\}$,
the preferred germ is single-valued in $z_1$ for fixed
$z_2$ and extends meromorphically to the sphere. Hence
it is rational in $z_1$. Homogeneity then identifies it with
$z_2^\rho R(z_1/z_2)$ for a rational $R$ with possible
poles only at $0,1,\infty$.

The product, reversed product and iterate are its three
uniquely determined formal expansions in
$|z_1|>|z_2|$, $|z_2|>|z_1|$ and
$|z_2|>|z_1-z_2|$, respectively. Apply the elementary
formal delta-function identity for these three expansions
(after clearing powers of $z_1$, $z_2$ and $z_1-z_2$);
this is the rationality--commutativity--associativity to
Jacobi calculation of \cite[Proposition~3.4.1]{FHL}, applied
to the present mixed matrix coefficients. The resulting
identity is precisely \eqref{eq:IOJacobi} after contraction
with $z'$. The restricted-dual pairings separate the
homogeneous output coefficients, so the full formal
intertwining-operator Jacobi identity holds. In particular,
this supplies the formal justification of the assertion in
\cite[Remark~2.11]{DH}: that remark also states that a
Definition~2.8 operator is an ordinary intertwining operator
for $V^{\langle g_1,g_2\rangle}$, including Jacobi rather than
merely the commutator formula. Here $g_1=g_2=g_3=1$, so
that fixed-point algebra is precisely $V$. We have therefore
established the Jacobi identity, lower truncation, derivative
property and absence of logarithms, giving
$\mathcal Y\in\mathcal I_{\mathrm{HL}}\binom Z{X\;Y}$.

Both inclusions preserve the actual mode coefficients: neither
constructs a new intertwining operator. Composition with module
maps in the two inputs and the output commutes with every
projection and formal identity used above, so the equality is
natural in $X,Y,Z$ (contravariantly in the inputs, covariantly
in the output). It also preserves weightwise evaluation at any
$P(z)$ with the same prescribed logarithm; a change of path or
branch acts on the same formal coefficients on both sides.
\end{proof}

In the next proposition, write $\bo_D$, $c^D$ and $a^D$
for the tensor product, crossed commutor restricted to degree $1$,
and associator constructed in \cite{Crossed}; write
$\bo_H$, $c^H$ and $a^H$ for their Huang--Lepowsky
counterparts.  The notation distinguishes structures on the same
underlying ordinary module category, not different classes of
modules.

\begin{proposition}\label{prop:neutralcomparison}
Let $V$ be strongly rational and $G\leq\Aut(V)$ finite, and let
$\cC_G$ be the category constructed by Xu in
\cite[Theorem~1.1]{Crossed}. Its binary $P(z)$-representing property
and canonical operators are given in
\cite[Theorem~4.3, equations~(4.3)--(4.4)]{Crossed}; its pullback
tensorators and action coherence are given in
\cite[equation~(4.7), Proposition~8.5]{Crossed}. The product--iterate
and unit normalizations are established in
\cite[Theorem~8.3, equations~(8.10)--(8.11), Proposition~8.5]{Crossed},
and the positive-half-turn commutor and parallel-transport identities
in \cite[Proposition~9.6, equations~(9.27)--(9.30)]{Crossed}. Then
there is a canonical braided monoidal equivalence
\[
 F:(\cC_1,\bo_D,c^D,a^D)\xrightarrow{\sim}
 (\Rep(V),\bo_H,c^H,a^H)
\]
whose underlying functor is the identity on modules and morphisms.
For every $z\in\C^*$ and every specified logarithm $L$ with $e^L=z$, the comparison also preserves the corresponding $P(z)$-representing objects and parallel transports.
\end{proposition}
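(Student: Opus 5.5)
The plan is to use Theorem~\ref{thm:neutralanalytic} to show that the two categories have literally the same representing data, and then to compare their structure maps through uniqueness in universal properties. Fix $X,Y\in\cC_1$. By the theorem, $\mathcal I_{\mathrm{DH}}\binom W{X\;Y}=\mathcal I_{\mathrm{HL}}\binom W{X\;Y}$ for every ordinary $W$, and this identification is natural in all three modules. Evaluation at $(z,L)$ is applied to the same formal coefficients on both sides. Hence the twisted $P(z)$-intertwining maps of \cite[Section~4]{DH}, restricted to $g=h=1$, coincide with the Huang--Lepowsky $P(z)$-intertwining maps of \cite{HL}. The two universal problems are therefore identical. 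Consequently the pair $(X\bo_D Y,I^1_{X,Y})$ from \cite[Theorem~4.3]{Crossed} is a Huang--Lepowsky $P(1)$-tensor product. We take $F$ to be the identity on objects and morphisms and equip it with the tensor structure $\varphi_{X,Y}:X\bo_D Y\to X\bo_H Y$, defined as the unique isomorphism satisfying $\overline{\varphi_{X,Y}}\,I^{1,D}_{X,Y}=I^{1,H}_{X,Y}$. Naturality of $\varphi$ follows from the bifunctoriality characterization $\overline{f\bo q}\,I=I(f\otimes q)$ on both sides. The same argument at arbitrary $(z,L)$ yields isomorphisms $\varphi^{z,L}$. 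Both parallel transports are defined by the same formula $\overline{\mathsf T_\gamma}\,I^{z_1}=\operatorname{ev}_{L_1^\gamma}(\mathcal Y^{z_2})$ on the same operator spaces, so $\varphi$ intertwines the two transports. This proves the final sentence of the proposition.

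Next we check the unit and the associativity constraint. For the unit, $V\bo_D X\cong X$ via the vertex operator $Y_X$, which is a Jacobi intertwining operator by the unit normalization \cite[Proposition~8.5]{Crossed}. The unitors then correspond because each is characterized by sending the universal map to $Y_X$ or to its skew-symmetric counterpart. For the associator, \cite[Theorem~8.3, (8.10)--(8.11)]{Crossed} characterizes $a^D$ by the product--iterate identity. This is the equality of the convergent product $\mathcal Y_{X,Y\bo Z}(x,z_1)\mathcal Y_{Y,Z}(y,z_2)$ with the iterate $\mathcal Y_{X\bo Y,Z}(\mathcal Y_{X,Y}(x,z_1-z_2)y,z_2)$ on $|z_1|>|z_2|>|z_1-z_2|>0$ with real logarithms, after transport to $P(1)$. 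Huang's associator $a^H$ is characterized by exactly the same identity in terms of Jacobi intertwining operators \cite{HL,Huang}. Under the equality of operator spaces, and after composing with $\varphi$, both sides become products and iterates of the same operators. The convergence statements are supplied on the DH side by completeness and on the HL side by Huang's convergence theorem. Because the iterated products of universal maps span a dense, that is separating, set in the completed codomain, the morphism $\varphi\circ a^D$ and the morphism $a^H\circ(\varphi\circ(\varphi\bo\id))$, suitably composed, agree on these generators. This yields the monoidal coherence square. The step I expect to be the main obstacle is matching the precise choice of the point $(z_1,z_2)$, of the logarithm branches, and of the transports used in \cite[(8.10)]{Crossed} against Huang--Lepowsky's conventions. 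I would handle this by working in the common real chamber $z_1>z_2>z_1-z_2>0$, where all logarithms are real and the transports are the identity by \cite[(9.28)]{Crossed}. The remaining comparisons then reduce to the transport compatibility already established.

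Finally, for the braiding: when $g=1$ we have $T_1=\id$, and by \eqref{eq:comm} the commutor $c^D$ is characterized by $\overline{c^D}\mathcal Y_{X,Y}(x,z)y=e^{zL(-1)}\mathcal Y_{Y,X}(y,e^{\pi i}z)x$. This is exactly the skew-symmetry operator $\Omega_{0}$ composed with upper-half-plane transport from $-1$ to $1$, and that composite is Huang's definition of $c^H$. Using the compatibility of $\varphi$ with transports and the injectivity of evaluation established in \eqref{eq:weight}, we obtain $\varphi_{Y,X}c^D_{X,Y}=c^H_{X,Y}\varphi_{X,Y}$. Hence $F$ is a braided monoidal functor. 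It is an equivalence because its underlying functor is the identity.
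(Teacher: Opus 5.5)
Your proposal is correct and follows essentially the same route as the paper. In both, Theorem~\ref{thm:neutralanalytic} makes the two $P(z)$-universal problems identical, and your $\varphi_{X,Y}$ is the paper's $\Phi^1_{X,Y}$, whose inverse the paper uses as the tensorator $F_2$. The transport, associator, braiding and unit compatibilities are then checked exactly as in the paper: via the lifted formal identity $\overline{\varphi}\,\mathcal Y^D=\mathcal Y^H$, the two product--iterate normalizations on the real chamber $z_1>z_2>z_1-z_2>0$, the common positive-half-turn definition, and the common vacuum intertwiners.

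The only step the paper makes more explicit is the generation claim used for the associator. It proves this claim by noting that the quotient by the submodule generated by a universal map's coefficients would give a nonzero morphism annihilating that map, contradicting injectivity of the representing bijection.
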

\begin{proof}
By the regularity and semisimplicity established in Section~2, the neutral sector $\cC_1$ and $\Rep(V)$ have the same ordinary $V$-modules and the same module homomorphisms. The tensor products, however, are initially different structures: $\bo_D$ is the binary product imported from \cite{Crossed}, whereas $\bo_H$ is the Huang--Lepowsky product. The $P(z)$ universal property on the Huang--Lepowsky side is the one in \cite[Definition~3.1 and Proposition~3.4]{HL}; the standard associator and braiding belong to the vertex-tensor-category construction of \cite[Theorem~4.4]{HL}. We compare the structures through their representing maps, not merely through their common underlying objects.

Fix $z\ne0$ and a logarithm $L$ of $z$. For $E=D,H$, let $I^{E,z}_{X,Y}:X\otimes Y\to\overline{X\bo_{E,P(z)}Y}$ denote the corresponding universal $P(z)$-intertwining map. Here $\bo_{D,P(z)}$ is the $P(z)$-product from \cite{Crossed} and $\bo_{H,P(z)}$ is the Huang--Lepowsky product, with the same chosen evaluation logarithm. For each ordinary target $W$, Theorem~\ref{thm:neutralanalytic} identifies the complete Du--Huang intertwining operators with the ordinary Huang--Lepowsky intertwining operators coefficient by coefficient. Evaluating both spaces at the same $L$ therefore identifies their $P(z)$-map spaces as the same subspace of $\Hom_{\C}(X\otimes Y,\overline W)$. The binary representing property in \cite{Crossed} and the Huang--Lepowsky universal property thus represent the same functor of $W$.

Consequently there is a unique $V$-module isomorphism
\begin{equation}\label{eq:neutralphi}
 \Phi^z_{X,Y}:X\bo_{D,P(z)}Y\xrightarrow{\sim}X\bo_{H,P(z)}Y,
 \qquad \overline{\Phi^z_{X,Y}}I^{D,z}_{X,Y}=I^{H,z}_{X,Y}.
\end{equation}
For completeness, the inverse is the unique map $\Psi^z_{X,Y}:X\bo_{H,P(z)}Y\to X\bo_{D,P(z)}Y$ with $\overline{\Psi^z_{X,Y}}I^{H,z}_{X,Y}=I^{D,z}_{X,Y}$. The two universal properties give
\[
 \overline{\Psi^z_{X,Y}\Phi^z_{X,Y}}I^{D,z}_{X,Y}=I^{D,z}_{X,Y},\qquad
 \overline{\Phi^z_{X,Y}\Psi^z_{X,Y}}I^{H,z}_{X,Y}=I^{H,z}_{X,Y};
\]
injectivity of the representing bijections forces both composites to be identities. This also proves uniqueness of $\Phi^z_{X,Y}$ among maps preserving the universal maps.

The comparison is natural in the inputs. If $f:X\to X'$ and $q:Y\to Y'$ are module maps, then
$\Phi^z_{X',Y'}\circ(f\bo_{D,P(z)}q) =(f\bo_{H,P(z)}q)\circ\Phi^z_{X,Y},$
because both sides, after completion and precomposition with $I^{D,z}_{X,Y}$, give $I^{H,z}_{X',Y'}\circ(f\otimes q)$. Naturality in the target object is the naturality in $W$ of the two representing bijections. In the ordinary sector, evaluation at a fixed logarithm is injective weight by weight, by the $L(0)$-bracket and the formula in \eqref{eq:weight}. Hence the evaluated identity in \eqref{eq:neutralphi} lifts to the identity of formal complete operators
$\overline{\Phi^z_{X,Y}}\,\mathcal Y^D_{X,Y}(x,t)y =\mathcal Y^H_{X,Y}(x,t)y,$
with the same formal coefficients and the same continued logarithm. Equivalently, $\overline{(\Phi^z_{X,Y})^{-1}}\mathcal Y^H_{X,Y}=\mathcal Y^D_{X,Y}$.

This comparison also commutes with parallel transport, not just with the existence of an object isomorphism. Let $\gamma$ be a path from $z_1$ to $z_2$ in $\C^*$, and let $L_1^\gamma$ be the continuation back to $z_1$ of the selected logarithm $L_2$ at $z_2$. Denote the transports in the two constructions by $\mathsf T^D_\gamma$ and $\mathsf T^H_\gamma$. Their defining identities and \eqref{eq:neutralphi} give
\[
 \overline{\Phi^{z_2}_{X,Y}\mathsf T^D_\gamma}I^{D,z_1}_{X,Y}
 =\operatorname{ev}_{L_1^\gamma}(\mathcal Y^H_{X,Y})
 =\overline{\mathsf T^H_\gamma\Phi^{z_1}_{X,Y}}I^{D,z_1}_{X,Y}.
\]
The representing property in \cite{Crossed} then yields the typed equality
$\Phi^{z_2}_{X,Y}\circ\mathsf T^D_\gamma =\mathsf T^H_\gamma\circ\Phi^{z_1}_{X,Y}.$
The identity is pathwise: if $\gamma$ winds around the origin, both
sides retain the same branch change and monodromy.

Take $z=1$ and $L=0$. The functor $F$ is the identity on the common module category, its unit map is $F_0=\id_V$, and its tensorator, in the convention $F_2:F(X)\bo_HF(Y)\to F(X\bo_DY)$, is
$F_{2,X,Y}=(\Phi^1_{X,Y})^{-1}:X\bo_HY\longrightarrow X\bo_DY.$
The strong-monoidal-functor convention fixes the tensorator as
$F_{2,X,Y}=(\Phi^1_{X,Y})^{-1}$.

We next verify associativity. Put
$U_H=(X\bo_HY)\bo_HZ, U_D=(X\bo_DY)\bo_DZ, V_D=X\bo_D(Y\bo_DZ).$
The two sides of the required coherence equation are the following maps with the same source and target:
\[
\begin{aligned}
 U_H&\xrightarrow{\,F_{2,X,Y}\bo_H\id_Z\,}
 (X\bo_DY)\bo_HZ
 \xrightarrow{\,F_{2,X\bo_DY,Z}\,}U_D
 \xrightarrow{\,F(a^D_{X,Y,Z})\,}V_D,\\
 U_H&\xrightarrow{\,a^H_{X,Y,Z}\,}
 X\bo_H(Y\bo_HZ)
 \xrightarrow{\,\id_X\bo_HF_{2,Y,Z}\,}
 X\bo_H(Y\bo_DZ)
 \xrightarrow{\,F_{2,X,Y\bo_DZ}\,}V_D.
\end{aligned}
\]
In particular, each occurrence of $F_2$ is applied to a pair of objects in the common ordinary module category, so the intermediate objects above are well typed.

For $E=D,H$, the defining normalization of the corresponding associator is the product--iterate identity
\begin{equation*}
\begin{split}
 \overline{a^E_{X,Y,Z}}\,
 \mathcal Y^E_{X\bo_EY,Z}
 \bigl(\mathcal Y^E_{X,Y}(x,z_1-z_2)y,z_2\bigr)z
 ={}&\mathcal Y^E_{X,Y\bo_EZ}(x,z_1)
       \mathcal Y^E_{Y,Z}(y,z_2)z,
\end{split}
\end{equation*}
on the common positive overlap $z_1>z_2>z_1-z_2>0$, with all logarithms real. The left side denotes the convergent sum of intermediate-weight
projections defining the iterate, with values in the completion of
$(X\bo_EY)\bo_EZ$ before applying $a^E$. The right side is the product, taking values in the completion of $X\bo_E(Y\bo_EZ)$. These are precisely the associator normalization imported from \cite[Theorem~8.3, equation~(8.11)]{Crossed} and the Huang--Lepowsky associator normalization. All three modules and the intermediate tensor products are ordinary, so Theorem~\ref{thm:neutralanalytic} applies to each binary operator appearing here.

Apply the two composites displayed above to the canonical iterated coefficients of the $H$-product $U_H$. The identity $\overline{F_{2,A,B}}\mathcal Y^H_{A,B}=\mathcal Y^D_{A,B}$, naturality of the binary products, and the defining product--iterate identity show that the first composite sends each such coefficient to the corresponding $D$-product coefficient. The second composite does the same, by first using the $H$ product--iterate identity and then applying the same binary comparison to its two factors. Thus, after pairing with any vector in the restricted dual of the common target module, the two composites give the same scalar analytic function on the positive overlap. The restricted dual separates the weight completion, so their values on every canonical iterated coefficient agree. The coefficients of a universal binary map generate its representing module: otherwise the quotient by the submodule they generate would give a nonzero morphism annihilating the universal map, contrary to injectivity of the representing bijection. Applying this observation successively to the two binary products shows that the canonical iterated coefficients generate $U_H$. The two module maps are therefore equal. This proves
\begin{align*}
 &F(a^D_{X,Y,Z})F_{2,X\bo_DY,Z}
                 (F_{2,X,Y}\bo_H\id_{FZ})\\
 &\qquad=F_{2,X,Y\bo_DZ}(\id_{FX}\bo_HF_{2,Y,Z})
                  a^H_{FX,FY,FZ}.\notag
\end{align*}
No associator coherence is used to prove this identity; only the two independently specified product--iterate normalizations and binary universality enter.

For braiding, both sides of
\begin{equation}\label{eq:neutralbraid}
 F(c^D_{X,Y})F_{2,X,Y}=F_{2,Y,X}c^H_{FX,FY}
\end{equation}
are maps $X\bo_HY\to Y\bo_DX$. The crossed commutor from \cite{Crossed} on the neutral sector is the positive-half-turn map of \eqref{eq:comm}; the Huang--Lepowsky braiding is defined by the same $\Omega_+$ skew transformation and the same upper-half-plane transport. In both formulas the continued logarithm is $\log(e^{\pi i}z)=\log z+\pi i$, and $e^{zL(-1)}$ acts on the output module $Y\bo X$. Since $T_1=\id$ and the two canonical formal operators are related by $F_2$, no nontrivial pullback comparison or action tensorator occurs in this neutral-sector calculation. Precomposing the two composites with $I^H_{X,Y}$ gives the same evaluated $\Omega_+$ operator by \eqref{eq:comm} and the Huang--Lepowsky braiding formula. The $P(1)$ representing property, which is injective on morphisms, gives \eqref{eq:neutralbraid}.

Finally, both unit constraints are normalized by the same vacuum intertwiners: the left unit uses $Y_X$ and the right unit uses $\Gamma_X=\Omega_-Y_X$, with the branch fixed in Section~2. The unit object is $V$ on both sides, so set $F_0=\id_V:V_H\to F(V_D)$. Evaluating on the universal maps gives the full unit equations
\[
 F(l^D_X)\circ F_{2,V,X}\circ(F_0\bo_H\id_{FX})=l^H_{FX},\qquad
 F(r^D_X)\circ F_{2,X,V}\circ(\id_{FX}\bo_HF_0)=r^H_{FX},
\]
which are the left and right unit diagrams in the chosen strong-monoidal convention. Thus $F$ is a strong monoidal functor, and \eqref{eq:neutralbraid} makes it braided. Since its underlying functor is the identity equivalence and every $F_{2,X,Y}$ and $F_0$ is an isomorphism, it is a braided monoidal equivalence. Its tensorator is unique among those carrying the specified universal maps to one another, and the displayed transport identity proves the asserted $P(z)$-branch and parallel-transport compatibility.
\end{proof}

\begin{remark}\label{rem:scopeofneutral}
Theorem~\ref{thm:neutralanalytic} concerns ordinary intertwining
operators. Its proof uses weak locality with the distinguished
ordinary intertwining operator and integer-powered vacuum fields.
The rationality argument applies to the variables of the additional
$V$ insertions with the intertwining-operator coordinate fixed.
\end{remark}

\section{The conformal crossed twist}
\begin{lemma}\label{lem:theta}
For $g\in G$ and $X\in\cC_g$, the conformal-weight operator induces
an isomorphism of $g$-twisted modules
$\theta_X=e^{2\pi iL_X(0)}:X\xrightarrow{\,\sim\,}T_gX.$
It is natural in $X$, satisfies
$\theta_{T_kX}=T_k(\theta_X)$ for every $k\in G$ under the
strict underlying pullback convention, and obeys
$\theta_V=\id_V=j_1$.
\end{lemma}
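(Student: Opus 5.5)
The plan is to verify the intertwining property directly at the level of twisted modes. We use the grading facts from Section~2: for $u\in V_d$ one has $[L_X(0),u^X_n]=(d-n-1)u^X_n$, and hence, since $X$ is a direct sum of finite-dimensional generalized weight spaces,
\[
e^{2\pi iL_X(0)}\,u^X_n\,e^{-2\pi iL_X(0)}=e^{2\pi i(d-n-1)}u^X_n .
\]
This holds on each $X_{[\lambda]}$ even if $L_X(0)$ is not semisimple, because $L_X(0)$ commutes with $e^{2\pi iL_X(0)}$ and the bracket relation exponentiates weight space by weight space. Take $u\in V^{(r)}$. Then $n\in r/N+\mathbb Z$ and $d\in\mathbb Z$, so $e^{2\pi i(d-n-1)}=e^{-2\pi ir/N}$. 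Since $g^{-1}u=e^{-2\pi ir/N}u$, this gives
\[
\theta_X\,Y^g_X(u,x)\,\theta_X^{-1}=Y^g_X(g^{-1}u,x)=Y_{T_gX}(u,x),
\]
using the pullback convention \eqref{eq:conventions}. By linearity over the eigenspace decomposition of $V$, $\theta_X$ is a twisted-module map $X\to T_gX$.

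We must also confirm that the target is $T_gX\in\cC_{ggg^{-1}}=\cC_g$. The map $\theta_X$ commutes with $L(0)$ and so preserves generalized weight spaces. It is invertible with inverse $e^{-2\pi iL_X(0)}$. Hence it is a weight-preserving isomorphism, which is exactly a morphism in the externally labeled category.

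Naturality is immediate. A morphism $f:X\to Y$ in $\cC_g$ commutes with $L(0)$, so $f\,e^{2\pi iL_X(0)}=e^{2\pi iL_Y(0)}f$. Since $T_g$ is the identity on underlying maps, this reads $T_g(f)\theta_X=\theta_Y f$.

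For covariance, $T_kX\in\cC_{kgk^{-1}}$ has the same underlying graded space as $X$. Its Virasoro field is $Y_{T_kX}(\omega,x)=Y_X(k^{-1}\omega,x)=Y_X(\omega,x)$, because automorphisms fix $\omega$. So $L_{T_kX}(0)=L_X(0)$, and $\theta_{T_kX}$ and $T_k(\theta_X)$ are the same linear map. Their targets agree, $T_{kgk^{-1}}T_kX=T_kT_gX$, by the strict identity $T_kT_l=T_{kl}$.

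For the unit, $V$ has integral weights, so $e^{2\pi iL(0)}=\id_V$. On the other side, $j_1=\id$ by \eqref{eq:actioncoherence}, and indeed $j_1(v)=1^{-1}v=v$.

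The only step needing care is the exponentiation in the generalized (nonsemisimple) case, together with the sign convention linking $n\in r/N+\mathbb Z$ to $g^{-1}$ rather than $g$. I would write out the weight-space computation $u_n^X X_{[\lambda]}\subseteq X_{[\lambda+d-n-1]}$ explicitly and check the sign against the positive eigenvalue convention in \eqref{eq:twistedjacobi}.
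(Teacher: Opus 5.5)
Your proposal is correct and follows the paper's proof step for step. It uses the same mode-conjugation identity $e^{2\pi iL_X(0)}u^X_ne^{-2\pi iL_X(0)}=e^{-2\pi ir/N}u^X_n=(g^{-1}u)^X_n$, with the same sign check against the positive eigenvalue convention. Naturality comes from morphisms commuting with $L(0)$, covariance from $k\omega=\omega$ together with $T_{kgk^{-1}}T_k=T_kT_g$, and $\theta_V=\id_V=j_1$ from integrality of the weights of $V$.

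The only difference is in the first step. The paper uses regularity to know $X$ is ordinary, so $\theta_X$ is the scalar $e^{2\pi i\lambda}$ on each $X_\lambda$. You instead exponentiate $L_X(0)u^X_n=u^X_n\bigl(L_X(0)+d-n-1\bigr)$ on the finite-dimensional generalized weight spaces, which makes that reduction unnecessary. It is this relation, not $L_X(0)$ commuting with its own exponential, that justifies your non-semisimple claim.
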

\begin{proof}
By the regularity result recalled in Section~2, $X$ is an ordinary
grading-restricted $g$-twisted module. Thus
$X=\bigoplus_{\lambda\in\C}X_\lambda$ and
$\theta_X|_{X_\lambda}=e^{2\pi i\lambda}\id$ defines an
invertible linear operator on the algebraic direct sum, with
inverse $e^{-2\pi iL_X(0)}$. Both operators preserve each weight
space. Since automorphisms of $V$ fix $\omega$, the pullback $T_gX$
has the same $L(n)$, in particular $L(0)$ and $L(-1)$, on the
same graded vector space.

Let $N=\operatorname{ord}(g)$ and choose $v\in V_d$ in the
$g$-eigenspace of eigenvalue $e^{2\pi ir/N}$, where $d\in\mathbb{Z}$
and $0\le r<N$. The finite-order convention of Section~2,
\cite[Definition~3.1]{DLM}, and the equivariance convention in
\cite[Section~2, definition of generalized twisted modules]{DH}
give
$Y_X(v,z)=\sum_{n\in r/N+\mathbb{Z}}v_nz^{-n-1}$.
Indeed, for an ordinary finite-order twisted field the latter
analytic equivariance reads
$Y_X(gv,e^{2\pi i}z)=Y_X(v,z)$ on continued branches,
so the monodromy of $z^{-n-1}$ is
$e^{-2\pi i(n+1)}=e^{-2\pi ir/N}$.
Since $Y_X(\omega,z)$ has integral modes, the twisted
$L(0)$-bracket gives
$[L_X(0),Y_X(v,z)]=(z\partial_z+d)Y_X(v,z), [L_X(0),v_n]=(d-n-1)v_n.$
For $x\in X_\lambda$, the vector $v_nx$ lies in
$X_{\lambda+d-n-1}$. Consequently, comparing the actions on
this weight space yields the operator identity
\[
 e^{2\pi iL_X(0)}v_ne^{-2\pi iL_X(0)}
  =e^{2\pi i(d-n-1)}v_n
  =e^{-2\pi ir/N}v_n=(g^{-1}v)_n.
\]
The same equation holds for arbitrary $v\in V$ by its finite
decomposition into homogeneous conformal weights and $g$-eigenspaces.
It follows coefficientwise in $\operatorname{Hom}(X,X)((z^{1/N}))$
(with the usual lower truncation on each vector) that
$\theta_XY_X(v,z)=Y_X(g^{-1}v,z)\theta_X =Y_{T_gX}(v,z)\theta_X.$
Here $T_gX\in\cC_{ggg^{-1}}=\cC_g$ and the last expression
uses the defining pullback field. Thus $\theta_X:X\to T_gX$
is a degree-preserving twisted-module morphism, rather than an
endomorphism of the original labeled module. Multiplying this
identity by $\theta_X^{-1}$ on the appropriate sides gives
$\theta_X^{-1}Y_{T_gX}(v,z)=Y_X(v,z)\theta_X^{-1}$;
therefore $\theta_X^{-1}:T_gX\to X$ is also a twisted-module
morphism. Both maps commute with $L(0)$ and $L(-1)$ and extend
weightwise to the corresponding completions.

For any $g$-twisted module morphism $f:X\to Y$ one has
$fL_X(0)=L_Y(0)f$ by applying $f$ to the conformal-vector
field. Hence, as maps $X\to T_gY$,
$T_g(f)\circ\theta_X=\theta_Y\circ f.$
On morphisms $T_g(f)$ is the same underlying linear map, but
its source and target are the pulled-back labeled modules.
For $k\in G$, put $h=kgk^{-1}$. The two maps
\[
 \theta_{T_kX}:T_kX\longrightarrow T_hT_kX,
 \qquad T_k(\theta_X):T_kX\longrightarrow T_kT_gX
\]
have the same target since $T_hT_k=T_{hk}=T_{kg}=T_kT_g$
as strict underlying pullback functors. Because
$Y_{T_kX}(\omega,z)=Y_X(\omega,z)$, they act on each
weight space by the same scalar $e^{2\pi i\lambda}$, proving
$\theta_{T_kX}=T_k(\theta_X)$. No monoidal tensorator or
action compositor is involved.

The assertion is sectorwise: for an object
$\bigoplus_gX_g$ of the externally graded sum, the collection
assembles into the morphism
$\bigoplus_g\theta_{X_g}:\bigoplus_gX_g\to
\bigoplus_gT_gX_g$, not a morphism to $T_g$ of the entire
object for a single $g$. Finally $V\in\cC_1$ is the tensor unit,
$T_1=\id$, and $V=\bigoplus_{d\in\mathbb{Z}}V_d$. Hence
$e^{2\pi iL_V(0)}=\id_V=j_1$, the unit normalization in
\eqref{eq:actioncoherence}.
\end{proof}

\begin{lemma}\label{lem:double}
Let $X\in\cC_g$, $Y\in\cC_h$, and put $q=ghg^{-1}$. The composite
\[
 c_{T_gY,X}\circ c_{X,Y}:
 X\bo Y\xrightarrow{\ \sim\ }T_qX\bo T_gY
\]
is an isomorphism. For $x\in X$ and $y\in Y$ it satisfies the
identity of completed canonical intertwining operators
\begin{equation}\label{eq:turn}
 \overline{c_{T_gY,X}\circ c_{X,Y}}\,
 \mathcal Y_{X,Y}(x,z)y
 =\mathcal Y_{T_qX,T_gY}(x,e^{2\pi i}z)y,
\end{equation}
where the right-hand side is evaluated on the branch
$\log(e^{2\pi i}z)=\log z+2\pi i$ obtained by two consecutive
positive half-turns.
\end{lemma}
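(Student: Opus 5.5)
The composite is an isomorphism simply because each crossed commutor is one. For \eqref{eq:turn} I plan to apply the normalization \eqref{eq:comm} twice and then use the $L(-1)$-translation covariance of the canonical operators to remove the two exponential factors.

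First I check that the composite is well typed. Since $X\in\cC_g$, the commutor $c_{X,Y}:X\bo Y\to T_gY\bo X$ is defined. The object $T_gY$ lies in $\cC_q$ with $q=ghg^{-1}$, so $c_{T_gY,X}:T_gY\bo X\to T_qX\bo T_gY$ is also defined. Both maps are isomorphisms, hence so is the composite, and the target has degree $(qgq^{-1})q=gh$.

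For the formula, apply \eqref{eq:comm} to $c_{X,Y}$:
\[
\overline{c_{X,Y}}\,\mathcal Y_{X,Y}(x,z)y=e^{zL(-1)}\mathcal Y_{T_gY,X}(y,e^{\pi i}z)x .
\]
Next apply $\overline{c_{T_gY,X}}$ to both sides. This map commutes with $L(-1)$ weightwise on completions, because it is a module map and the exponential is a weightwise finite sum into each target weight. Applying \eqref{eq:comm} to the pair $(T_gY,X)$ at the point $w=e^{\pi i}z$, with $\log w=\log z+\pi i$, gives
\[
e^{zL(-1)}e^{e^{\pi i}z\,L(-1)}\mathcal Y_{T_qX,T_gY}(x,e^{\pi i}e^{\pi i}z)y .
\]
The branch satisfies $\log=\log z+2\pi i$. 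The exponentials combine to $e^{(z-z)L(-1)}=\id$, since $e^{\pi i}z=-z$ as a complex number even though the logarithm is shifted. This yields \eqref{eq:turn}.

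The main obstacle is justifying \eqref{eq:comm} at the non-principal point $w$ with its shifted logarithm. As stated, \eqref{eq:comm} is an identity of complete formal operators: by \eqref{eq:weight} each projected coefficient is a single power $z^{\gamma-\alpha-\beta}$ times a fixed vector. So the identity holds for every consistently continued logarithm, by the injectivity of weightwise evaluation noted after \eqref{eq:pull}. I would make this explicit as follows. Substitute the formal variable at $w$ and use the rule $(e^{\pi i}w)^\rho=e^{\rho(\log z+2\pi i)}$. The only subtlety is that the projected weight sums of $e^{zL(-1)}\mathcal Y(\cdots)$ must be handled weightwise. Each fixed target weight receives finitely many contributions, so the two exponentials can be composed coefficientwise and cancel exactly, with no analytic regrouping.
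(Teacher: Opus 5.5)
Your proposal is correct and follows essentially the same route as the paper's proof: apply \eqref{eq:comm} twice with the successively continued logarithms, commute $\overline{c_{T_gY,X}}$ past $e^{zL(-1)}$ one weight at a time, and cancel $e^{zL(-1)}e^{wL(-1)}$ using $w=-z$ as a complex number. The paper also justifies the second application at the shifted logarithm by noting that \eqref{eq:comm} is an identity of complete formal operators, not a single $P(1)$ evaluation, which is the same point you make.
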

\begin{proof}
Put $U=X\bo Y$, $B=T_gY\bo X$, and $C=T_qX\bo T_gY$.
Since $T_gY\in\cC_q$ and $T_qX\in\cC_{qgq^{-1}}$, both $B$ and $C$
are in $\cC_{qg}=\cC_{gh}$. In particular the two arrows
\[
 c_1:=c_{X,Y}:U\longrightarrow B,
 \qquad c_2:=c_{T_gY,X}:B\longrightarrow C
\]
are composable isomorphisms in the displayed order of $g,h$.
Write $\mathcal Z=\mathcal Y_{T_gY,X}$ and
$\mathcal W=\mathcal Y_{T_qX,T_gY}$.  The formal normalization
\eqref{eq:comm}, applied first to $(X,Y)$ and then to $(T_gY,X)$,
gives the following two equalities, respectively in the completed
outputs $\overline B$ and $\overline C$:
\[
 \overline{c_1}\mathcal Y_{X,Y}(x,z)y
   =e^{zL_B(-1)}\mathcal Z(y,w)x,
 \qquad w=e^{\pi i}z,
\]
$\overline{c_2}\mathcal Z(y,w)x =e^{wL_C(-1)}\mathcal W(x,e^{\pi i}w)y.$
Here $y$ in the first equality is the underlying vector of $T_gY$,
$x$ in the second is the underlying vector of $T_qX$, and the
logarithms are continued successively as
$\log w=\log z+\pi i$ and
$\log(e^{\pi i}w)=\log z+2\pi i$.  The second equality uses
$\deg(T_gY)=q$, not $h$.  It applies at the inherited logarithm
because \eqref{eq:comm} is an identity of complete \emph{formal}
operators, with the indicated logarithmic continuation, rather than
only an identity at a single $P(1)$ evaluation.

As a morphism of twisted $V$-modules, $c_2$ intertwines the action of
the conformal vector, so
$c_2L_B(-1)=L_C(-1)c_2$.  This relation also holds after
weightwise completion. Indeed, $L(-1)$ raises conformal weight by one,
and for an ordinary lower-bounded output every fixed-weight projection
of $e^{zL(-1)}H(z)$ has only finitely many contributions from lower
weights.  Thus, coefficientwise in the completed output,
$\overline{c_2}\,e^{zL_B(-1)}H(z) =e^{zL_C(-1)}\overline{c_2}H(z).$
Applying this to the first equality and then substituting the second,
we obtain an equality in $\overline C$:
\[
\begin{aligned}
 \overline{c_2\circ c_1}\,\mathcal Y_{X,Y}(x,z)y
 &=e^{zL_C(-1)}\overline{c_2}\,\mathcal Z(y,w)x\\
 &=e^{zL_C(-1)}e^{wL_C(-1)}
     \mathcal W(x,e^{\pi i}w)y\\
 &=\mathcal W(x,e^{2\pi i}z)y.
\end{aligned}
\]
For the last equality, $w=-z$ as a complex coordinate; both
translation operators now act on $C$.  Their product is the identity:
for each fixed output weight, its coefficient of $L_C(-1)^n$ is
$\sum_{j=0}^{n}z^jw^{n-j}/(j!(n-j)!)=(z+w)^n/n!$.
This is a finite sum in every fixed-weight projection, so no
rearrangement of an unconditionally convergent infinite series is
being assumed.

The position $e^{2\pi i}z$ is numerically $z$, but the continued
logarithm is not reset.  For homogeneous $x\in X_\alpha$,
$y\in Y_\beta$, equation \eqref{eq:weight} applied to $\mathcal W$
gives on the $\gamma$-weight component the factor
$e^{2\pi i(\gamma-\alpha-\beta)}$ relative to the initial
branch; the pullbacks preserve these weights.  No logarithmic
polynomials occur for the ordinary objects of $\cC_G$.  Finite
homogeneous decompositions extend the assertion to all $x,y$.
The two commutors already include their chosen upper-half-plane
transports.  The argument follows from \eqref{eq:comm} and the indicated module
structures.
\end{proof}

\begin{theorem}\label{thm:balance}
The conformal twist satisfies the crossed balancing relation
\begin{equation}\label{eq:bal}
\theta_{X\bo Y}=J^{gh}_{X,Y}
 \bigl(T_{ghg^{-1}}\theta_X\bo T_g\theta_Y\bigr)
 c_{T_gY,X}\,c_{X,Y},\quad X\in\cC_g,\ Y\in\cC_h.
\end{equation}
\end{theorem}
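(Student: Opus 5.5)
We prove \eqref{eq:bal} by evaluating both sides on the canonical operator $\mathcal Y_{X,Y}(x,z)y$ and appealing to the $P(1)$ representing property. Both sides are morphisms $X\bo Y\to T_{gh}(X\bo Y)$: the left side by Lemma~\ref{lem:theta}, the right side because $T_{q}\theta_X:T_qX\to T_qT_gX=T_{gh}X$ and $T_g\theta_Y:T_gY\to T_gT_hY=T_{gh}Y$, with $q=ghg^{-1}$, followed by $J^{gh}_{X,Y}$. The representing bijection is injective on morphisms, so it suffices to show that the two completed composites agree after precomposition with $I^1_{X,Y}$. Equivalently, by the injectivity of weightwise evaluation, they must agree on the formal operator $\mathcal Y_{X,Y}(x,z)y$ for homogeneous $x\in X_\alpha$, $y\in Y_\beta$.

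\emph{Right-hand side.} Lemma~\ref{lem:double} turns the double commutor into $\mathcal Y_{T_qX,T_gY}(x,e^{2\pi i}z)y$. By \eqref{eq:weight}, on the $\gamma$-component this equals $e^{2\pi i(\gamma-\alpha-\beta)}$ times the $\gamma$-component of $\mathcal Y_{T_qX,T_gY}(x,z)y$ on the original branch. Next apply $T_q\theta_X\bo T_g\theta_Y$. By the defining bifunctoriality $\overline{f\bo q}\,\mathcal Y_{A,B}=\mathcal Y_{A',B'}(f\otimes q)$, read formally, this replaces $x,y$ by $e^{2\pi i\alpha}x$ and $e^{2\pi i\beta}y$. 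The result is $\mathcal Y_{T_{gh}X,T_{gh}Y}(x,z)y$ times $e^{2\pi i\gamma}$ on the $\gamma$-component. Finally, the tensorator normalization \eqref{eq:pull} with $k=gh$ gives $\overline{J^{gh}_{X,Y}}\mathcal Y_{T_{gh}X,T_{gh}Y}(x,z)y=\mathcal Y_{X,Y}(x,z)y$, viewed in $\overline{T_{gh}(X\bo Y)}$. Since $J^{gh}$ is a module map, it preserves weight components, so the whole right side yields $\sum_\gamma e^{2\pi i\gamma}\pi_\gamma\mathcal Y_{X,Y}(x,z)y$.

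\emph{Left-hand side.} Because $\theta_{X\bo Y}=e^{2\pi iL(0)}$ acts on $(X\bo Y)_\gamma$ by $e^{2\pi i\gamma}$, and $T_{gh}$ preserves the graded space, $\overline{\theta_{X\bo Y}}\mathcal Y_{X,Y}(x,z)y$ is the same weighted sum. This proves the equality.

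The main obstacle is bookkeeping of branches and labels rather than any analytic estimate. First, \eqref{eq:pull} must be used at the original branch, after the factor $e^{2\pi i(\gamma-\alpha-\beta)}$ has been extracted using \eqref{eq:weight}. This extraction is legitimate because the modules are ordinary, so there are no logarithmic terms. Second, we must check that $T_qT_g=T_{gh}$ and $T_gT_h=T_{gh}$ make the sources and targets of $T_q\theta_X\bo T_g\theta_Y$ and $J^{gh}_{X,Y}$ match literally. The pullbacks preserve $L(0)$, so the scalars attached to the weights $\alpha$, $\beta$, $\gamma$ are unchanged. Non-homogeneous inputs follow by linearity.
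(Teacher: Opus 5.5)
Your proposal is correct and follows essentially the same route as the paper: the full-turn identity of Lemma~\ref{lem:double}, bifunctoriality of the $P(1)$ product to extract $e^{2\pi i(\alpha+\beta)}$, the tensorator normalization \eqref{eq:pull} with $k=gh$, the weight formula \eqref{eq:weight}, and injectivity of the $P(1)$ representing property. The only difference is the order of steps. You extract the monodromy factor $e^{2\pi i(\gamma-\alpha-\beta)}$ at the stage of $T_qX\bo T_gY$, before applying $T_q\theta_X\bo T_g\theta_Y$ and $J^{gh}_{X,Y}$, so you need \eqref{eq:pull} only on the original branch. The paper instead applies \eqref{eq:pull} on the continued branch and extracts the factor at the end.
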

\begin{proof}
Put $q=ghg^{-1}$, $U=X\bo Y$, and $k=gh=qg$.
The tensor-product objects in the displayed composite lie in
$\cC_k$.  The constituent arrows have the following types:
\[
\begin{gathered}
 c_{X,Y}:U\longrightarrow T_gY\bo X,\qquad
 c_{T_gY,X}:T_gY\bo X\longrightarrow T_qX\bo T_gY,\\
 T_q(\theta_X):T_qX\longrightarrow T_qT_gX=T_kX,\qquad
 T_g(\theta_Y):T_gY\longrightarrow T_gT_hY=T_kY,\\
 J^k_{X,Y}:T_kX\bo T_kY\longrightarrow T_kU.
\end{gathered}
\]
The successive pullbacks use the underlying action functors, while
the tensorator $J^k$ remains part of the composite.
The composite on the right of \eqref{eq:bal}, denoted by $B_{X,Y}$,
is thus a morphism $U\to T_kU$.  Since $U\in\cC_k$,
Lemma~\ref{lem:theta} gives a morphism
$\theta_U:U\to T_kU$ of the same type.

We compare the two morphisms by precomposing their completed
extensions with the universal formal intertwining operator
$\mathcal Y_{X,Y}$; all operator identities below are interpreted
coefficientwise in $\overline{T_kU}$.  The positive full-turn
identity \eqref{eq:turn} first gives
\[
 \overline{c_{T_gY,X}c_{X,Y}}\,
 \mathcal Y_{X,Y}(x,z)y
 =\mathcal Y_{T_qX,T_gY}(x,e^{2\pi i}z)y,
\]
with logarithm $L+2\pi i$ if the initial logarithm of $z$
is $L$.  This equation is in
$\overline{T_qX\bo T_gY}$.  For twisted-module morphisms
$f:A\to A'$ and $p:B\to B'$, the bifunctoriality of the chosen
$P(1)$-tensor products gives
$\overline{f\bo p}\,I^1_{A,B}(a\otimes b) =I^1_{A',B'}(fa\otimes pb).$
The same identity holds for the associated complete formal
operators at every consistently continued logarithm: both sides
are obtained from the corresponding natural complete
intertwining operators and agree coefficientwise by the
$L(0)$-grading and the evaluation--coefficient correspondence
of Section~2.  In particular, with
$f=T_q(\theta_X)$ and $p=T_g(\theta_Y)$, it gives
\[
\begin{aligned}
 &\overline{T_q(\theta_X)\bo T_g(\theta_Y)}\,
 \mathcal Y_{T_qX,T_gY}(x,e^{2\pi i}z)y\\
 &\qquad=\mathcal Y_{T_kX,T_kY}
       \bigl(T_q(\theta_X)x,e^{2\pi i}z\bigr)
       T_g(\theta_Y)y.
\end{aligned}
\]
The map $f\bo p$ is selected by this universal property.

First take $x\in X_\alpha$, $y\in Y_\beta$.
All pullbacks preserve $L(0)$, so their underlying graded
vectors satisfy
$T_q(\theta_X)x=e^{2\pi i\alpha}x, T_g(\theta_Y)y=e^{2\pi i\beta}y.$
The two right-hand inputs in the preceding display are now
vectors of $T_kX$ and $T_kY$, respectively.  Applying the
completed tensorator and its defining formal normalization
\eqref{eq:pull}, with $k=gh$, yields the identity in
$\overline{T_kU}$
\[
\begin{aligned}
 \overline{B_{X,Y}}\,\mathcal Y_{X,Y}(x,z)y
 &=\overline{J^k_{X,Y}}\,
 \mathcal Y_{T_kX,T_kY}
 \bigl(T_q(\theta_X)x,e^{2\pi i}z\bigr)
 T_g(\theta_Y)y\\
 &=e^{2\pi i(\alpha+\beta)}\,
 \overline{J^k_{X,Y}}\,
 \mathcal Y_{T_kX,T_kY}(x,e^{2\pi i}z)y\\
 &=e^{2\pi i(\alpha+\beta)}\,
 \mathcal Y_{X,Y}(x,e^{2\pi i}z)y.
\end{aligned}
\]
In the last term the coefficients are viewed in $T_kU$ through
its fixed underlying weight-space identification.  This use of
\eqref{eq:pull} is valid on the continued branch because the
identity is a normalization of complete \emph{formal} operators,
not merely an equality of numerical $P(1)$ evaluations.  The
extension $\overline{J^k_{X,Y}}$ preserves each conformal
weight since $J^k_{X,Y}$ is a twisted-module morphism.

For a weight $\gamma$ of $U$, equation \eqref{eq:weight}
gives, on the initial branch,
$\pi_\gamma\mathcal Y_{X,Y}(x,z)y
=z^{\gamma-\alpha-\beta}v_\gamma(x,y)$.
The positively continued branch in the preceding equality
therefore satisfies
\[
\begin{aligned}
 \pi_\gamma\overline{B_{X,Y}}\,
      \mathcal Y_{X,Y}(x,z)y
 &=e^{2\pi i(\alpha+\beta)}
   e^{2\pi i(\gamma-\alpha-\beta)}
      \pi_\gamma\mathcal Y_{X,Y}(x,z)y\\
 &=e^{2\pi i\gamma}
      \pi_\gamma\mathcal Y_{X,Y}(x,z)y\\
 &=\pi_\gamma\overline{\theta_U}\,
      \mathcal Y_{X,Y}(x,z)y.
\end{aligned}
\]
Here $\pi_\gamma$ refers throughout to the $\gamma$-weight
space of the common output $T_kU$, which is the same graded
vector space as $U$.  Thus the last scalar is exactly the
weightwise action of $e^{2\pi iL_U(0)}:U\to T_kU$.
The positive full-turn factor is essential: a negative full turn
would produce the inverse factor and would not prove
\eqref{eq:bal} with this twist.  Ordinary grading restriction
eliminates logarithmic polynomial terms and makes both
completed morphisms and every weight projection well defined.
Equality of all weight projections implies equality in the
weight-product completion.

Every vector of an ordinary module is a finite sum of its
homogeneous-weight components, so the preceding identity holds
for arbitrary $x\in X$, $y\in Y$ by linearity.  Evaluating the
resulting formal equality at $z=1$ with $\log1=0$ gives
\[
 \overline{B_{X,Y}}\,I^1_{X,Y}
 =\overline{\theta_U}\,I^1_{X,Y}:
 X\otimes Y\longrightarrow\overline{T_kU}.
\]
The binary representing property of the chosen twisted
$P(1)$-tensor product identifies
\[
 \Hom_V(U,T_kU)
 \xrightarrow{\ \sim\ }
 \mathcal I_{P(1),0}\binom{T_kU}{X\;Y},\qquad
 f\longmapsto\overline f\,I^1_{X,Y}.
\]
Its injectivity, applied to the two morphisms $B_{X,Y}$ and
$\theta_U$, proves \eqref{eq:bal}.  This argument uses only the
canonical operator, its naturality, the tensorator normalization,
Lemmas~\ref{lem:theta} and \ref{lem:double}, and the imported
binary representing property.  For objects containing several
homogeneous $G$-components, the balancing identity is extended
additively from pairs of homogeneous summands; it does not
assign a single degree to an inhomogeneous object.
\end{proof}

\section{Contragredients and the ribbon axiom}

\subsection{Restricted contragredients and categorical duals}\label{subsec:contragredientduals}
We use the right-dual evaluations and coevaluations of Section~2.
The rigidity result in \cite{Crossed} supplies the categorical duals;
the contragredient-pairing lemma of \cite{Crossed}
supplies, independently of categorical duality, natural morphisms
$e_X:X\bo DX\to\mathbf1$ which are nonzero for $X\ne0$.
The restricted-dual pullback comparison $\tau_{k,X}$ is the one
fixed in Section~2. We use the pairing naturality and its
$G$-covariance as explicit inputs from \cite{Crossed}.

\begin{proposition}\label{prop:dual}
The pairings $e_X$ give natural isomorphisms
$\phi_X:DX\xrightarrow{\sim}X^\vee$, characterized by
$e_X=\ev_X(\id_X\bo\phi_X)$. Under these isomorphisms,
the conformal twist satisfies the following identity, with
no implicit object identification: for $X\in\cC_g$,
\begin{equation}\label{eq:dualtwist}
 \theta_{DX}
 =T_{g^{-1}}(D\theta_X)\circ
  \bigl(T_{g^{-1}}(\tau_{g,X})\bigr)^{-1}
 :DX\longrightarrow T_{g^{-1}}DX.
\end{equation}
Moreover, the identifications $\phi_X$ intertwine the restricted
and categorical duals of the monoidal $G$-action:
\begin{equation}\label{eq:dualactionphi}
 \phi_{T_kX}=b_{k,X}\circ T_k(\phi_X)\circ\tau_{k,X}:
 D(T_kX)\longrightarrow(T_kX)^\vee.
\end{equation}
Consequently the categorical dual-twist identity
\eqref{eq:categoricaldualtwistaxiom} holds.
\end{proposition}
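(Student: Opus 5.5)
We prove the three assertions of Proposition~\ref{prop:dual} in order: first $\phi_X$, then the contragredient twist identity \eqref{eq:dualtwist}, then the action comparison \eqref{eq:dualactionphi}, and finally the categorical identity \eqref{eq:categoricaldualtwistaxiom}.

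\emph{Step 1: the isomorphisms $\phi_X$.} Rigidity gives a natural bijection $\Hom(DX,X^\vee)\cong\Hom(X\bo DX,\mathbf1)$, $\psi\mapsto\ev_X(\id_X\bo\psi)$. So there is a unique $\phi_X$ with $e_X=\ev_X(\id_X\bo\phi_X)$, and it is natural because $e$ and $\ev$ are dinatural. It remains to show $\phi_X$ is an isomorphism, which suffices for simple $X$ by additivity. For simple $X$, both $DX$ and $X^\vee$ are simple: the first since $D$ is an involutive contravariant equivalence via $\delta_X$, the second by rigidity. A morphism between simple objects is either zero or an isomorphism, and $\phi_X\neq0$ because $e_X\neq0$. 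Hence $\phi_X$ is an isomorphism.

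\emph{Step 2: the identity \eqref{eq:dualtwist}.} This is a weightwise computation on underlying vector spaces. The map $\tau_{g,X}$ is the identity on the common restricted dual, and $D\theta_X$ is the transpose of $e^{2\pi iL_X(0)}$. On $X_\lambda^*$, which is the weight-$\lambda$ space of $DX$, this transpose is multiplication by $e^{2\pi i\lambda}$, which is also $\theta_{DX}$ there. It remains to check that the sources and targets match. $D\theta_X$ maps $D(T_gX)$ to $DX$, so $T_{g^{-1}}(D\theta_X)$ maps $T_{g^{-1}}D(T_gX)$ to $T_{g^{-1}}DX$, which lies in $\cC_{g^{-1}}$. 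Precomposing with $T_{g^{-1}}(\tau_{g,X})^{-1}\colon DX=T_{g^{-1}}T_gDX\to T_{g^{-1}}D(T_gX)$ gives the stated type. Since every map involved is linear and weight-preserving, the identity holds.

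\emph{Step 3: the identity \eqref{eq:dualactionphi}.} Both sides are maps $D(T_kX)\to(T_kX)^\vee$. By Step~1 applied to $T_kX$, it suffices to show that $\ev_{T_kX}(\id\bo\text{RHS})=e_{T_kX}$. Lemma~\ref{lem:dualactioncomparison} characterizes $b_{k,X}$ by its compatibility with evaluations, namely that $\ev_{T_kX}(\id\bo b_{k,X})$ equals $j_k^{-1}T_k(\ev_X)J^k_{X,X^\vee}$. Using this, the left side becomes $j_k^{-1}T_k(e_X)J^k_{X,DX}(\id\bo\tau_{k,X})$. We then need the $G$-covariance of the contragredient pairings, taken from \cite[Lemma~10.1, (10.1)--(10.2)]{Crossed}, in exactly this form. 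To verify it, precompose with the canonical operator $\mathcal Y_{T_kX,D(T_kX)}$ and use \eqref{eq:pull} and $j_k(v)=k^{-1}v$. Both sides then reduce to the same pairing $\langle\varphi,\cdot\rangle$ composed with a vertex operator. The binary representing property gives equality.

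\emph{Step 4: the categorical identity.} Transport \eqref{eq:dualtwist} along $\phi$. Naturality of $\theta$ with respect to the isomorphism $\phi_X$ gives $\theta_{X^\vee}=T_{g^{-1}}(\phi_X)\,\theta_{DX}\,\phi_X^{-1}$. Next, use that right-dual transposition corresponds to $D$ under $\phi$, that is, $\theta_X^\vee\circ\phi_{T_gX}=\phi_X\circ D\theta_X$, which follows from dinaturality and Step~1. Finally, substitute \eqref{eq:dualactionphi} with $k=g$ for $\phi_{T_gX}$. The $\tau$ factors then cancel, leaving $T_{g^{-1}}(\theta_X^\vee)T_{g^{-1}}(b_{g,X})$.

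The main obstacle is Step~3. The crossed-category inputs must be assembled into the evaluation-level covariance identity with the correct directions of $j_k$ and $J^k$, and that identity has to be checked on canonical operators at the level of completed formal series.
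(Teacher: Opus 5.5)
Your proposal is correct and follows essentially the same route as the paper. You take $\phi_X$ as the right-dual mate of $e_X$, invertible by simplicity, naturality and additivity. You prove \eqref{eq:dualtwist} by a weightwise transpose computation, and \eqref{eq:dualactionphi} from the evaluation comparison \eqref{eq:bcomparison} plus the $G$-covariance of the pairings. You then conclude by naturality of $\theta$ along $\phi_X$ together with \eqref{eq:dualnaturality} for $f=\theta_X$, so that the $\tau$ factors cancel.

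The only difference is that you also sketch a verification of the pairing covariance through \eqref{eq:pull} and $j_k(v)=k^{-1}v$, whereas the paper simply imports it from \cite{Crossed}. If you carry out that sketch, it must also track the normalized invariant form and the vacuum comparison $j_g^{-1}$ built into the pairing construction, not only the tensorator normalization.
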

\begin{proof}
We work first with $X\in\cC_g$. The two potential duals are
separate objects of $\cC_{g^{-1}}$: $DX$ is the weightwise
restricted contragredient, whereas $X^\vee$ is the chosen
categorical right dual, with
$\ev_X:X\bo X^\vee\to\mathbf1$ and
$\coev_X:\mathbf1\to X^\vee\bo X$. The contravariant
functors $D$ and $(-)^\vee$ are equivalences on the appropriate
sectors: $D^2\cong\id$ by the restricted bidual comparison
of Section~2, and categorical right duality by rigidity.

For any $U$, the right-dual adjunction
\cite[Definition~2.10.2 and Proposition~2.10.8(b)]{EGNO}
identifies $\Hom(U,X^\vee)$ with
$\Hom(X\bo U,\mathbf1)$ by
$h\mapsto\ev_X(\id_X\bo h)$. Its inverse sends
$p:X\bo U\to\mathbf1$ to the following morphism:
\[
\begin{aligned}
 U&\xrightarrow{\,l_U^{-1}\,}\mathbf1\bo U
 \xrightarrow{\,\coev_X\bo\id_U\,}(X^\vee\bo X)\bo U\\
 &\xrightarrow{\,a_{X^\vee,X,U}\,}X^\vee\bo(X\bo U)
 \xrightarrow{\,\id_{X^\vee}\bo p\,}X^\vee\bo\mathbf1
 \xrightarrow{\,r_{X^\vee}\,}X^\vee.
\end{aligned}
\]
Indeed, substituting this composite in
$\ev_X(\id_X\bo-)$ and applying associator and unitor
naturality reduces it to $p$ by the first right-dual triangle
identity. Conversely, substituting
$p=\ev_X(\id_X\bo h)$ reduces the inverse composite to $h$
by the second triangle identity. Both reductions use the
right-dual convention of Section~2, including the displayed
orientations of $a,l,r$; no strict associativity is assumed.

Let $\phi_X:DX\to X^\vee$ be the unique mate of the pairing
$e_X:X\bo DX\to\mathbf1$ constructed in \cite{Crossed}. Thus
$e_X=\ev_X(\id_X\bo\phi_X).$
Since the mate bijection is linear, $e_X\ne0$ implies
$\phi_X\ne0$. If $X$ is simple, $DX$ is simple under the
restricted-contragredient anti-equivalence, and $X^\vee$ is
simple under categorical right-dual anti-equivalence. Both
belong to $\cC_{g^{-1}}$, so $\phi_X$ is an isomorphism
by simplicity. This uses the nonvanishing of $e_X$ from
\cite[Lemma~10.1]{Crossed}, not any
prior identification of the two duals.

We establish naturality before extending invertibility beyond
simple objects. If $f:X\to Y$ lies in one homogeneous sector,
the pairing identity imported from \cite{Crossed} is
\[
 e_Y(f\bo\id_{DY})=e_X(\id_X\bo Df):
 X\bo DY\longrightarrow\mathbf1.
\]
The definition of $f^\vee:Y^\vee\to X^\vee$ gives
$\ev_Y(f\bo\id_{Y^\vee})
 =\ev_X(\id_X\bo f^\vee)$, both with source
$X\bo Y^\vee$. Substituting the mate formulas therefore
yields
\[
 \ev_X(\id_X\bo f^\vee\phi_Y)
 =\ev_X(\id_X\bo\phi_X Df):
 X\bo DY\longrightarrow\mathbf1.
\]
Injectivity of the right-dual mate correspondence gives
$\phi_X Df=f^\vee\phi_Y$ as an identity of morphisms
$DY\to X^\vee$. No $\phi_X^{-1}$ has been used here.

For arbitrary homogeneous $X$, choose a finite decomposition
$X=\bigoplus_{i=1}^m S_i$ with $S_i$ simple, writing
$\iota_i:S_i\to X$, $p_i:X\to S_i$ for its inclusions
and projections. Repetitions among the $S_i$ are allowed.
Contravariance gives $D\iota_i:DX\to DS_i$ and
$Dp_i:DS_i\to DX$, with
$D\iota_i Dp_i=\id_{DS_i}$,
$D\iota_i Dp_j=0$ for $i\ne j$, and
$\sum_i Dp_iD\iota_i=\id_{DX}$. The pairing naturality,
applied to $\iota_i$ and then precomposed with
$\id_{S_i}\bo Dp_j$, gives the precise block relation
\[
 e_X(\iota_i\bo Dp_j)
 =\begin{cases}e_{S_i},&i=j,\\0,&i\ne j,\end{cases}
 :S_i\bo DS_j\longrightarrow\mathbf1.
\]
In particular, it excludes off-diagonal blocks even for
isomorphic summands and their multiplicity spaces.
Applying this naturality identity to $p_i$ gives
$\phi_X Dp_i=p_i^\vee\phi_{S_i}$. Consequently
\[
 \phi_X=\sum_i p_i^\vee\phi_{S_i}D\iota_i,
 \qquad
 \phi_X^{-1}=\sum_i Dp_i\phi_{S_i}^{-1}\iota_i^\vee.
\]
The relations $p_i\iota_i=\id_{S_i}$,
$p_i\iota_j=0$ for $i\ne j$, and
$\sum_i\iota_i p_i=\id_X$, together with their
contravariant duals, show directly that the two displayed
sums are inverse. Hence $\phi_X$ is an isomorphism for
every homogeneous $X$, including $X=0$. Since it is the
unique mate of the intrinsically defined $e_X$, the
isomorphism does not depend on the decomposition.

Now set $\ev_X^D=e_X$ and transport the right-dual
coevaluation along $\phi_X$:
\begin{equation*}
 \coev_X^D=(\phi_X^{-1}\bo\id_X)\coev_X:
 \mathbf1\longrightarrow DX\bo X.
\end{equation*}
All arrows are in the neutral degree when appropriate:
$X\bo DX$ and $DX\bo X$ have degree $1$.
With the associator $a$ and unitors $l,r$ from Section~2,
the two proposed right-dual triangle composites are
\begin{align*}
 T_X^D&=l_X(e_X\bo\id_X)a^{-1}_{X,DX,X}
   (\id_X\bo\coev_X^D)r_X^{-1}:X\longrightarrow X,\\
 T_{DX}^D&=r_{DX}(\id_{DX}\bo e_X)a_{DX,X,DX}
   (\coev_X^D\bo\id_{DX})l_{DX}^{-1}:
   DX\longrightarrow DX.
\end{align*}
For the first composite, the intermediate objects are
$X\bo\mathbf1$, $X\bo(DX\bo X)$,
$(X\bo DX)\bo X$ and $\mathbf1\bo X$.
Substitution and associator naturality give the explicit
cancellation
\[
\begin{aligned}
 T_X^D
 &=l_X(\ev_X\bo\id_X)
 ((\id_X\bo\phi_X)\bo\id_X)
 a^{-1}_{X,DX,X}
 (\id_X\bo(\phi_X^{-1}\bo\id_X))
 (\id_X\bo\coev_X)r_X^{-1}\\
 &=l_X(\ev_X\bo\id_X)a^{-1}_{X,X^\vee,X}
 (\id_X\bo\coev_X)r_X^{-1}=\id_X.
\end{aligned}
\]
The second composite passes through
$\mathbf1\bo DX$, $(DX\bo X)\bo DX$,
$DX\bo(X\bo DX)$ and $DX\bo\mathbf1$.
Naturality of $a,l,r$ and the same evaluation/coevaluation
substitutions yield, with every intervening occurrence of
$\phi_X\phi_X^{-1}$ cancelled,
\[
\begin{aligned}
 \phi_X T_{DX}^D
 &=r_{X^\vee}(\id_{X^\vee}\bo\ev_X)
 a_{X^\vee,X,X^\vee}
 (\coev_X\bo\id_{X^\vee})l_{X^\vee}^{-1}\phi_X\\
 &=\phi_X.
\end{aligned}
\]
The last equality is the second right-dual triangle identity
for $X^\vee$. Since $\phi_X$ is invertible,
$T_{DX}^D=\id_{DX}$. Thus $(DX,e_X,\coev_X^D)$
is a right dual of $X$ with both triangle identities,
for all homogeneous $X$. The naturality proved above is the
fully typed identity
\begin{equation}\label{eq:dualnaturality}
 \phi_X\circ Df=f^\vee\circ\phi_Y:
 DY\longrightarrow X^\vee.
\end{equation}

We next compare the two kinds of dual under pullback. For
$X\in\cC_g$, the objects $D(T_kX)$, $T_k(DX)$,
$T_k(X^\vee)$, and $(T_kX)^\vee$ all lie in
$\cC_{kg^{-1}k^{-1}}$. In particular, the comparison
$\tau_{k,X}:D(T_kX)\to T_k(DX)$ is distinct from the
categorical action--dual comparison
$b_{k,X}:T_k(X^\vee)\to(T_kX)^\vee$.

The map $\tau_{k,X}$ is the inverse of the weightwise identity
$\vartheta_{k,X}:T_k(DX)\to D(T_kX)$ constructed in Section~2.
Indeed, writing $u^\mathrm{op}(z)=e^{zL(1)}(-z^{-2})^{L(0)}u$,
the two restricted-dual vertex operators have the same matrix
coefficients against $w\in X$:
\[
\begin{aligned}
 \langle Y_{D(T_kX)}(u,z)\varphi,w\rangle
 &=\langle\varphi,Y_X(k^{-1}u^\mathrm{op}(z),z^{-1})w\rangle\\
 &=\langle Y_{T_k(DX)}(u,z)\varphi,w\rangle.
\end{aligned}
\]
Here $k^{-1}u^\mathrm{op}(z)=(k^{-1}u)^\mathrm{op}(z)$,
since $k$ fixes the conformal vector. Hence $\tau_{k,X}$ is
a twisted-module isomorphism, rather than merely an
identification of graded vector spaces. For a morphism $f:X\to Y$
in a fixed sector its contravariant naturality is
\[
 \tau_{k,X}\,D(T_kf)=T_k(Df)\,\tau_{k,Y}:
 D(T_kY)\longrightarrow T_k(DX).
\]
Since all $\tau$'s are identity maps on these restricted-dual
weight spaces, the strict underlying action gives, as a composite
$D(T_kT_lX)\to T_kD(T_lX)\to T_kT_l(DX)$,
$\tau_{kl,X}=T_k(\tau_{l,X})\,\tau_{k,T_lX}, \tau_{1,X}=\id_{DX}.$
No monoidal tensorator enters this restricted-dual comparison.

The pairing construction in \cite{Crossed} is independent of the
chosen categorical dual: in its contragredient-pairing lemma,
its skew and contragredient transforms of the twisted module
vertex operator are mapped to $V$ using the normalized invariant
form and the vacuum comparison $j_g^{-1}$. Their universal binary
factorization gives $e_X:X\bo DX\to\mathbf1$. In particular,
the covariance identity is an additional property of this
construction, not a consequence of pairing naturality alone.
Written with $\tau_{k,X}=\vartheta_{k,X}^{-1}$, it is
\begin{equation*}
 e_{T_kX}=j_k^{-1}\,T_k(e_X)\,J^k_{X,DX}
       (\id_{T_kX}\bo\tau_{k,X}):
 T_kX\bo D(T_kX)\longrightarrow\mathbf1.
\end{equation*}
The four successive arrows on the right have targets
$T_kX\bo T_k(DX)$, $T_k(X\bo DX)$,
$T_k\mathbf1$, and $\mathbf1$, respectively.
Thus the inverse unit map $j_k^{-1}$ and the tensorator $J^k$
cannot be suppressed. The comparison is understood in the
same $P(1)$ and logarithmic normalization as the pairing in
\cite{Crossed} and the action tensorator of Section~2.

For completeness, the comparison $b_{k,X}$ referred to in
Lemma~\ref{lem:dualactioncomparison} is independently defined
by the right-dual evaluation mate of
\[
 j_k^{-1}T_k(\ev_X)J^k_{X,X^\vee}:
 T_kX\bo T_k(X^\vee)\longrightarrow\mathbf1.
\]
The strong monoidal equivalence $T_k$ carries the evaluation
and coevaluation of $X$ to a right-dual pair for $T_kX$;
uniqueness of right duals makes its mate $b_{k,X}$ an
isomorphism. Its defining comparison is
\eqref{eq:bcomparison}. The mate $b_{k,X}$ is defined from $J^k$, $j_k$, and rigidity;
the identification $DX\simeq X^\vee$ is established separately
in Proposition~\ref{prop:dual}.

We may therefore calculate in
$\Hom(T_kX\bo D(T_kX),\mathbf1)$, using first
\eqref{eq:bcomparison}, then naturality of $J^k$, and finally
the mate identity $e_X=\ev_X(\id_X\bo\phi_X)$:
\[
\begin{aligned}
&\ev_{T_kX}
 (\id_{T_kX}\bo b_{k,X}T_k(\phi_X)\tau_{k,X})\\
&\quad=j_k^{-1}T_k(\ev_X)J^k_{X,X^\vee}
 (\id_{T_kX}\bo T_k(\phi_X)\tau_{k,X})\\
&\quad=j_k^{-1}T_k(\ev_X)
 T_k(\id_X\bo\phi_X)J^k_{X,DX}
 (\id_{T_kX}\bo\tau_{k,X})\\
&\quad=j_k^{-1}T_k(e_X)J^k_{X,DX}
 (\id_{T_kX}\bo\tau_{k,X})
 =e_{T_kX}.
\end{aligned}
\]
The two candidate maps to $(T_kX)^\vee$ have common source
$D(T_kX)$; their evaluations are both $e_{T_kX}$.
The right-dual adjunction
$\Hom(D(T_kX),(T_kX)^\vee)\cong
\Hom(T_kX\bo D(T_kX),\mathbf1)$ is injective. Hence
\eqref{eq:dualactionphi} follows, retaining $\tau_{k,X}$ and
$b_{k,X}$ as the distinct comparison maps.

The compatibility with morphisms is contravariant. For
$f:X\to Y$, the comparison maps satisfy
\[
\begin{aligned}
 \tau_{k,X}D(T_kf)&=T_k(Df)\tau_{k,Y},\\
 b_{k,X}T_k(f^\vee)&=(T_kf)^\vee b_{k,Y}.
\end{aligned}
\]
Together with $\phi_XDf=f^\vee\phi_Y$ these equations identify
both routes from $D(T_kY)$ to $(T_kX)^\vee$; for example,
\[
\begin{aligned}
 &b_{k,X}T_k(\phi_X)\tau_{k,X}D(T_kf)\\
 &\quad=b_{k,X}T_k(f^\vee\phi_Y)\tau_{k,Y}
 =(T_kf)^\vee b_{k,Y}T_k(\phi_Y)\tau_{k,Y}.
\end{aligned}
\]
The identities extend to finite direct sums by additivity,
including repeated isomorphic simple summands.

It remains to verify compatibility with group multiplication,
without treating the monoidal action as strict. The defining
evaluation of the composite
$T_kT_l(X^\vee)\xrightarrow{T_k(b_{l,X})}
 T_k((T_lX)^\vee)\xrightarrow{b_{k,T_lX}}
 (T_kT_lX)^\vee$ is
\[
\begin{aligned}
 &\ev_{T_kT_lX}
  (\id\bo b_{k,T_lX}T_k(b_{l,X}))\\
 &\quad=j_k^{-1}T_k(\ev_{T_lX})
  J^k_{T_lX,(T_lX)^\vee}
  (\id\bo T_k(b_{l,X}))\\
 &\quad=j_k^{-1}T_k(j_l^{-1})T_{kl}(\ev_X)
  T_k(J^l_{X,X^\vee})J^k_{T_lX,T_l(X^\vee)}\\
 &\quad=j_{kl}^{-1}T_{kl}(\ev_X)J^{kl}_{X,X^\vee}.
\end{aligned}
\]
Here the second equality uses naturality of $J^k$; the last
uses the actual unit and tensorator coherences
$j_{kl}=T_k(j_l)j_k$ and
$J^{kl}_{X,X^\vee}=T_k(J^l_{X,X^\vee})
 J^k_{T_lX,T_l(X^\vee)}$.
Evaluation-mate uniqueness now proves
$b_{kl,X}=b_{k,T_lX}T_k(b_{l,X}), b_{1,X}=\id_{X^\vee}.$
Consequently, applying \eqref{eq:dualactionphi} first to $l$
and then to $k$ gives the same map as applying it to $kl$:
\[
\begin{aligned}
 \phi_{T_kT_lX}
 &=b_{k,T_lX}T_k(\phi_{T_lX})\tau_{k,T_lX}\\
 &=b_{k,T_lX}T_k(b_{l,X})T_{kl}(\phi_X)
   T_k(\tau_{l,X})\tau_{k,T_lX}\\
 &=b_{kl,X}T_{kl}(\phi_X)\tau_{kl,X}.
\end{aligned}
\]
In particular this is valid without commuting $k$ and $l$.
At $k=1$ the three comparisons reduce to identities; on the
vacuum object the normalized invariant form and $j_k$ give the
same evaluation comparison, so the identity is compatible with
the unit constraints. Drinfeld isomorphism, or any subsequent ribbon result.

Now fix $X\in\cC_g$ and put $h=g^{-1}$.  By the strict
underlying pullback convention, $T_hT_g=T_1=\id$, whereas the
monoidal tensorators of these functors need not be identities.
The contravariant restricted dual and the contravariant categorical
right dual give, respectively,
\[
\begin{gathered}
 \theta_X:X\longrightarrow T_gX,\qquad
 D\theta_X:D(T_gX)\longrightarrow DX,\\
 \tau_{g,X}:D(T_gX)\xrightarrow{\sim}T_g(DX),\qquad
 T_h(\tau_{g,X}):T_hD(T_gX)\xrightarrow{\sim}DX,\\
 T_h(D\theta_X):T_hD(T_gX)\longrightarrow T_h(DX),\\
 \theta_{DX}:DX\longrightarrow T_h(DX).
\end{gathered}
\]
Here $D(T_gX)$, $T_g(DX)$ and $DX$ are all in $\cC_h$;
so are the sources and targets of the displayed morphisms after
applying $T_h$, since $gh=hg=1$ for this fixed element $g$.
In particular, $(T_h\tau_{g,X})^{-1}$ has source $DX$ and
target $T_hD(T_gX)$; the composite on the right of
\eqref{eq:dualtwist} is therefore a morphism
$DX\to T_h(DX)$, with the same source and target as $\theta_{DX}$.

To compute it, let $\varphi\in(DX)_\lambda=X_\lambda^*$.
The pullback $T_gX$ has the same conformal operator as $X$,
since $g\omega=\omega$, and thus
$(D(T_gX))_\lambda=X_\lambda^*$ canonically as graded vector
spaces.  By the definition of the restricted contravariant
transpose, for $x\in X_\lambda$,
\[
 \langle D\theta_X(\varphi),x\rangle
 =\langle\varphi,\theta_X x\rangle
 =e^{2\pi i\lambda}\langle\varphi,x\rangle.
\]
This is a complex-linear transpose: it is neither the inverse
operator nor the conjugate-linear adjoint, so its scalar is
$e^{2\pi i\lambda}$, not $e^{-2\pi i\lambda}$ or its complex
conjugate.  The opposite-vertex definition
\eqref{eq:restrictedcontragredient} makes $L_{DX}(0)$ the
weightwise transpose of $L_X(0)$; hence $\varphi$ has weight
$\lambda$ in $DX$.  Moreover, $\tau_{g,X}$ is the
weightwise identity twisted-module isomorphism verified above:
its inverse after applying $T_h$ sends $\varphi\in(DX)_\lambda$
to the \emph{same functional} in
$\bigl(T_hD(T_gX)\bigr)_\lambda$.  The next arrow
$T_h(D\theta_X)$ maps this functional to
$e^{2\pi i\lambda}\varphi\in(T_hDX)_\lambda$.
This is exactly the action of
$\theta_{DX}=e^{2\pi iL_{DX}(0)}$ on that weight space,
proving \eqref{eq:dualtwist} as an equality of twisted-module
morphisms.  Ordinary objects are algebraic direct sums of their
weight spaces, so no completion or convergence argument is needed.
The same calculation applies to every finite semisimple direct
sum, including the zero object, entirely within the restricted-dual
construction. The categorical right-dual comparison follows below.

We now compare the categorical twists, keeping the two dual
functors separate.  The maps involved have types
\[
\begin{gathered}
 \phi_X:DX\xrightarrow{\sim}X^\vee,\qquad
 \phi_{T_gX}:D(T_gX)\xrightarrow{\sim}(T_gX)^\vee,\\
 b_{g,X}:T_g(X^\vee)\xrightarrow{\sim}(T_gX)^\vee,\qquad
 \theta_X^\vee:(T_gX)^\vee\longrightarrow X^\vee,\\
 T_h(b_{g,X}):X^\vee\longrightarrow T_h((T_gX)^\vee),\\
 T_h(\theta_X^\vee):T_h((T_gX)^\vee)
        \longrightarrow T_h(X^\vee),\\
 \theta_{X^\vee}:X^\vee\longrightarrow T_h(X^\vee),\qquad
 T_h(\phi_X):T_h(DX)\longrightarrow T_h(X^\vee).
\end{gathered}
\]
The first source of $T_h(b_{g,X})$ is literally $X^\vee$
because $T_hT_g=\id$ as underlying functors.  In particular,
all morphisms in the dual-twist comparison are in $\cC_h$.
Equation \eqref{eq:dualactionphi} at $k=g$ gives, after
composition on the right with $\tau_{g,X}^{-1}$,
\[
 b_{g,X}T_g(\phi_X)
   =\phi_{T_gX}\tau_{g,X}^{-1}:
 T_g(DX)\longrightarrow(T_gX)^\vee.
\]
Applying $T_h$ and the strict underlying action, without
replacing either $b_{g,X}$ or $\tau_{g,X}$ by an identity,
yields the equality of maps
$DX\to T_h((T_gX)^\vee)$,
$T_h(b_{g,X})\phi_X =T_h(\phi_{T_gX})(T_h\tau_{g,X})^{-1}.$
Since $\theta_X:X\to T_gX$ is a morphism in the \emph{same}
sector $\cC_g$, the contravariant naturality
\eqref{eq:dualnaturality}, applied with $f=\theta_X$, is
\[
 \theta_X^\vee\phi_{T_gX}=\phi_X D\theta_X:
 D(T_gX)\longrightarrow X^\vee.
\]
The preceding two equalities and \eqref{eq:dualtwist} give,
now as successive equalities in
$\Hom(DX,T_h(X^\vee))$,
\[
\begin{aligned}
 &T_h(\theta_X^\vee)T_h(b_{g,X})\phi_X\\
 &\quad=T_h(\theta_X^\vee\phi_{T_gX})
           (T_h\tau_{g,X})^{-1}\\
 &\quad=T_h(\phi_X D\theta_X)
           (T_h\tau_{g,X})^{-1}\\
 &\quad=T_h(\phi_X)T_h(D\theta_X)
           (T_h\tau_{g,X})^{-1}\\
 &\quad=T_h(\phi_X)\theta_{DX}.
\end{aligned}
\]
Finally, naturality of the conformal crossed twist on the
$h$-sector, applied to the module isomorphism
$\phi_X:DX\to X^\vee$, gives
\[
 T_h(\phi_X)\theta_{DX}=\theta_{X^\vee}\phi_X:
 DX\longrightarrow T_h(X^\vee).
\]
Since $\phi_X$ is invertible by the first part of this
proposition, cancelling it on the right proves
\eqref{eq:categoricaldualtwistaxiom}.  This is precisely the
type-correct expression of the third condition of
\cite[Lemma~2.3]{Kirillov} in the right-dual convention of
Definition~\ref{def:twistaxioms}: Kirillov writes
$\theta_{V^*}=R_{g^{-1}}(\theta_V^*)$ while suppressing the
canonical identification between the dual of a pulled-back
object and the pullback of a dual.  In the present conventions
that identification is $T_h(b_{g,X})$.
\end{proof}

\subsection{The crossed Drinfeld isomorphism}\label{subsec:crosseddrinfeld}

\begin{definition}\label{def:crosseddrinfeld}
For homogeneous $X\in\cC_g$, put
\[
 f_X:=\ev_X\circ c_{X^\vee,T_gX}:
 X^\vee\bo T_gX\longrightarrow\mathbf1.
\]
This is well typed: $X^\vee\in\cC_{g^{-1}}$, so the crossed
braiding has target
$T_{g^{-1}}(T_gX)\bo X^\vee=X\bo X^\vee,$
where $T_{g^{-1}}T_g=T_1=\id$ is the underlying action identity
for this single-object comparison. The monoidal tensorators enter in
the subsequent monoidal calculations.  The resulting $f_X$ has target
$\mathbf1$ and is a morphism in the neutral component.

Since $X^{\vee\vee}$ is the chosen right dual of $X^\vee$, with
$\ev_{X^\vee}:X^\vee\bo X^{\vee\vee}\to\mathbf1$ and
$\coev_{X^\vee}:\mathbf1\to X^{\vee\vee}\bo X^\vee$, the right-dual
adjunction gives the natural bijection
\[
\begin{aligned}
 \operatorname{Hom}(X^\vee\bo T_gX,\mathbf1)
 &\xrightarrow{\ \sim\ }
 \operatorname{Hom}(T_gX,X^{\vee\vee}),\\
 p&\longmapsto
 r_{X^{\vee\vee}}(\id_{X^{\vee\vee}}\bo p)
 a_{X^{\vee\vee},X^\vee,T_gX}
 (\coev_{X^\vee}\bo\id_{T_gX})l_{T_gX}^{-1}.
\end{aligned}
\]
Its inverse sends $u:T_gX\to X^{\vee\vee}$ to
$\ev_{X^\vee}(\id_{X^\vee}\bo u)$.  This is the right-dual
adjunction of \cite[Proposition~2.10.8(b)]{EGNO}, applied to
$X^\vee$; it is not the left-dual adjunction.  Define
$\mathfrak u_X:T_gX\longrightarrow X^{\vee\vee}$
to be the mate of $f_X$.  Equivalently, the defining mate identity is
\begin{equation}\label{eq:drinfeldmate}
 X^\vee\bo T_gX
 \xrightarrow{\;c_{X^\vee,T_gX}\;}
 T_{g^{-1}}(T_gX)\bo X^\vee
 =X\bo X^\vee
 \xrightarrow{\ev_X}\mathbf1.
\end{equation}
After suppressing only the canonical associators and unitors, the
adjunction formula reads
\begin{equation}\label{eq:drinfeldformula}
 \mathfrak u_X
 =(\id_{X^{\vee\vee}}\bo\ev_X)
   (\id_{X^{\vee\vee}}\bo c_{X^\vee,T_gX})
   (\coev_{X^\vee}\bo\id_{T_gX}).
\end{equation}
Indeed, the successive objects in this composite are
$T_gX$, $X^{\vee\vee}\bo X^\vee\bo T_gX$,
$X^{\vee\vee}\bo X\bo X^\vee$, and $X^{\vee\vee}$;
the second step uses $T_{g^{-1}}T_gX=X$, and the last uses
$\ev_X:X\bo X^\vee\to\mathbf1$.  Thus the mate is a morphism
$T_gX\to X^{\vee\vee}$ in $\cC_g$.
\end{definition}

\begin{lemma}\label{lem:dualactioncomparison}
Let $X$ be any object and $k\in G$.  There is a unique natural
isomorphism
$b_{k,X}:T_k(X^\vee)\xrightarrow{\sim}(T_kX)^\vee$
whose evaluation comparison is
\begin{equation}\label{eq:bcomparison}
 \ev_{T_kX}(\id_{T_kX}\bo b_{k,X})
 =j_k^{-1}T_k(\ev_X)J^k_{X,X^\vee}.
\end{equation}
It induces the double-dual comparison
\begin{equation}\label{eq:chi2}
 \chi^{(2)}_{k,X}:=(b_{k,X}^{-1})^\vee b_{k,X^\vee}:
 T_k(X^{\vee\vee})\xrightarrow{\sim}(T_kX)^{\vee\vee}.
\end{equation}
These comparisons satisfy the coherence equations for the strict
underlying $G$-action, with the nontrivial tensorators $J^k$ retained.
\end{lemma}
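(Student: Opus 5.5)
The construction of $b_{k,X}$ will come from the right-dual adjunction. The morphism
\[
 p_{k,X}:=j_k^{-1}T_k(\ev_X)J^k_{X,X^\vee}:T_kX\bo T_k(X^\vee)\longrightarrow\mathbf1
\]
is well typed, with degree $1$ target, because $T_kX\bo T_k(X^\vee)\in\cC_1$. The adjunction bijection $\Hom(U,(T_kX)^\vee)\cong\Hom(T_kX\bo U,\mathbf1)$, $h\mapsto\ev_{T_kX}(\id\bo h)$, with inverse written out as in the proof of Proposition~\ref{prop:dual}, then yields a unique $b_{k,X}$ satisfying \eqref{eq:bcomparison}. Uniqueness is automatic from injectivity of this bijection.

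For invertibility I would transport the dual pair along the strong monoidal equivalence $T_k$. Put
\[
 \coev'=(J^k_{X^\vee,X})^{-1}T_k(\coev_X)\,j_k:\mathbf1\to T_k(X^\vee)\bo T_kX.
\]
The triple $(T_k(X^\vee),p_{k,X},\coev')$ is then a right dual of $T_kX$. Checking the two triangle identities is the standard argument that strong monoidal functors preserve duals: apply $T_k$ to the triangle identities of $X$, then use the associativity and unit conditions for $(J^k,j_k)$ displayed in Section~2 to move the $J^k$'s past the associator and unitors. Since right duals are unique up to the unique isomorphism compatible with evaluations, the mate $b_{k,X}$ of $p_{k,X}$ is exactly that isomorphism. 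Concretely, I would write $b_{k,X}^{-1}$ as the mate of $\ev_{T_kX}$ with respect to the transported pair and check both composites via mate injectivity. This triangle-identity bookkeeping with nonstrict $J^k$ and $j_k$ is the only step I expect to be tedious; I would organize it by first proving the analogue of \eqref{eq:bcomparison} for $\coev'$ and then invoking \cite[Proposition~2.10.8]{EGNO}-type uniqueness.

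Naturality in $X$ also follows from mate injectivity. For $f:X\to Y$, both $b_{k,X}T_k(f^\vee)$ and $(T_kf)^\vee b_{k,Y}$ have evaluation
\[
 j_k^{-1}T_k(\ev_Y)J^k(T_kf\bo\id),
\]
which one sees using naturality of $J^k$ and $\ev_X(\id\bo f^\vee)=\ev_Y(f\bo\id)$. The coherence statements are $b_{kl,X}=b_{k,T_lX}T_k(b_{l,X})$ and $b_{1,X}=\id$; these are exactly the computation already displayed in the proof of Proposition~\ref{prop:dual}, which uses $j_{kl}=T_k(j_l)j_k$ and \eqref{eq:actioncoherence}. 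Once $b_{k,X}$ is established, I would simply cite that computation.

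For $\chi^{(2)}_{k,X}$, note that $b_{k,X^\vee}:T_k(X^{\vee\vee})\to(T_k(X^\vee))^\vee$ and $(b_{k,X}^{-1})^\vee:(T_k(X^\vee))^\vee\to(T_kX)^{\vee\vee}$, so the composite is well typed and invertible. Its coherence, $\chi^{(2)}_{kl,X}=\chi^{(2)}_{k,T_lX}T_k(\chi^{(2)}_{l,X})$, follows formally. Apply the $b$-coherence to both $X$ and $X^\vee$, use contravariant functoriality of $(-)^\vee$ on $(b^{-1})^\vee$, and use naturality of $b_{k,-}$ applied to the morphism $b_{l,X}^{-1}$. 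With that, the lemma's coherence assertions hold with the $J^k$ retained throughout.
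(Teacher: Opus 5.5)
Your proposal is correct and follows essentially the paper's route. In both, $b_{k,X}$ is the right-dual mate of $j_k^{-1}T_k(\ev_X)J^k_{X,X^\vee}$, invertibility comes from the strong monoidal $T_k$ preserving duals, and naturality and coherence come from mate injectivity together with $j_{kl}=T_k(j_l)j_k$ and $J^{kl}_{X,Y}=T_k(J^l_{X,Y})J^k_{T_lX,T_lY}$; this is the evaluation computation the paper carries out in the proof of Proposition~\ref{prop:dual}. Your explicit derivation of the $\chi^{(2)}$ cocycle, from $b$-coherence and naturality of $b_{k,-}$ applied to $b_{l,X}^{-1}$, fills in a step the paper only asserts.
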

\begin{proof}
The right side of \eqref{eq:bcomparison} has domain
$T_kX\bo T_k(X^\vee)$ and codomain $\mathbf1$. By the
representing property of the chosen right dual $(T_kX)^\vee$,
it is the evaluation pairing of a unique $b_{k,X}$. Strong
monoidality of $T_k$ and preservation of duals show this map
is invertible. Naturality and coherence follow from uniqueness
of these representing morphisms and the identity
$J^{kl}_{X,Y}=T_k(J^l_{X,Y})J^k_{T_lX,T_lY}$; they are not
asserted with the $J$'s replaced by identity maps.
In \eqref{eq:chi2}, the first factor $b_{k,X^\vee}$ has target
$(T_kX^\vee)^\vee$, while $(b_{k,X}^{-1})^\vee$ has source
$(T_kX^\vee)^\vee$ and target $(T_kX)^{\vee\vee}$.
Thus the composite is correctly typed.
\end{proof}

\subsection{The pivotal structure and its coherence}\label{subsec:pivotalstructure}

\begin{proposition}\label{prop:pivotal}
Under the rigidity and crossed-hexagon identities of \cite[Theorem~1.1]{Crossed} and
Theorem~\ref{thm:balance} and Proposition~\ref{prop:dual},
the natural maps
$\delta_X:=\mathfrak u_X\theta_X: X\xrightarrow{\sim}X^{\vee\vee}$
form the pivotal structure corresponding to the conformal
crossed twist in Kirillov's Lemma~2.3. In particular, writing
$\mu_{X,Y}:X^{\vee\vee}\bo Y^{\vee\vee}
\xrightarrow{\sim}(X\bo Y)^{\vee\vee}$ for the canonical
monoidal structure of the double-right-dual functor, one has
\begin{align}
 \delta_{X\bo Y}&=\mu_{X,Y}(\delta_X\bo\delta_Y),
                         \label{eq:pivotalmonoidal}\\
 \delta_{T_kX}&=\chi^{(2)}_{k,X}\,T_k(\delta_X),
                         \label{eq:pivotalG}\\
 \delta_{X^\vee}&=(\delta_X^{-1})^\vee.
                         \label{eq:pivotaldual}
\end{align}
The equalities include the indicated canonical duality
identifications and the original action tensorators.
\end{proposition}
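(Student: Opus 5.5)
The proof follows the standard ribbon-to-pivotal dictionary. We transport each property through the crossed Drinfeld isomorphism $\mathfrak u$, keeping every action tensorator explicit.

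\emph{Invertibility and naturality.} First we show $\mathfrak u_X$ is a natural isomorphism. Naturality in $X$ (for morphisms within one sector) follows from naturality of $c$, $\ev$ and $\coev$, and from mate uniqueness in \eqref{eq:drinfeldmate}. For invertibility we write down the candidate inverse
\[
(\ev_{X^\vee}\bo\id)(\id\bo c^{-1}_{X^\vee,T_gX})(\id\bo\coev_X),
\]
with associators restored. The two composites reduce to identities by the triangle identities together with naturality of $c$ applied to $\coev$ and $\ev$, exactly as in \cite[Proposition~8.9.3]{EGNO}. The crossed case only adds $T_{g^{-1}}T_g=\id$ bookkeeping.

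\emph{Monoidality \eqref{eq:pivotalmonoidal}.} This is the main step. We first prove the crossed analogue of $\mathfrak u_{X\bo Y}=\mu(\mathfrak u_X\bo\mathfrak u_Y)(c_{T_gY,X}c_{X,Y})^{-1}$, with $J^{gh}_{X,Y}$ and $T$'s inserted so that the types match. To do this we compute the mate $f_{X\bo Y}$ with the two crossed hexagons \eqref{eq:crossedhexone}--\eqref{eq:crossedhextwo}. We expand $c_{(X\bo Y)^\vee,T_{gh}(X\bo Y)}$ after identifying $(X\bo Y)^\vee\simeq Y^\vee\bo X^\vee$ and $T_{gh}(X\bo Y)\simeq T_{gh}X\bo T_{gh}Y$ via $(J^{gh})^{-1}$. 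The inverse tensorators appearing in \eqref{eq:crossedhexone} then cancel against the $J^{gh}$ of the identification by action-equivariance \eqref{eq:crossedequiv}. The leftover double braiding appears in the inverse form $(c_{T_gY,X}c_{X,Y})^{-1}$, pre-composed by $(J^{gh}_{X,Y})^{-1}$. Theorem~\ref{thm:balance} expresses this double braiding, together with $J^{gh}$ and the twists on the factors, as $\theta_{X\bo Y}$. Substituting, the factor $\theta_{X\bo Y}$ combines with $\mathfrak u_{X\bo Y}$, and the factor twists combine with $\mathfrak u_X,\mathfrak u_Y$. Naturality of $\mathfrak u$ lets us move $T_q\theta_X\bo T_g\theta_Y$ through, which yields \eqref{eq:pivotalmonoidal}. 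I expect the obstacle to be precisely this diagram chase: each hexagon instance must be applied with the correct degree label (for example $\deg T_gY=q$), and the inverse tensorators must be matched. I would handle it by working entirely with evaluation mates in $\Hom(\,\cdot\,,\mathbf1)$, where injectivity of the adjunction reduces every equality to one between pairings.

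\emph{$G$-covariance \eqref{eq:pivotalG}.} We apply $T_k$ to the mate identity \eqref{eq:drinfeldmate}. Then we use \eqref{eq:crossedequiv} to convert $T_k(c_{X^\vee,T_gX})$ into $c_{T_kX^\vee,T_kT_gX}$, conjugated by $J^k$. Next, \eqref{eq:bcomparison} rewrites $T_k(\ev_X)$ through $b_{k,X}$ and $j_k$. Mate uniqueness gives $\mathfrak u_{T_kX}=\chi^{(2)}_{k,X}T_k(\mathfrak u_X)$, and combining this with $\theta_{T_kX}=T_k(\theta_X)$ from Lemma~\ref{lem:theta} yields \eqref{eq:pivotalG}.

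\emph{Duality \eqref{eq:pivotaldual}.} We compute $\mathfrak u_{X^\vee}^\vee\mathfrak u_X$ by the standard argument, obtaining an identity of the form $(\mathfrak u_X^{-1})^\vee\simeq\mathfrak u_{X^\vee}\circ(\text{double braiding})$ up to $b$-comparisons. We then invoke the dual-twist identity \eqref{eq:categoricaldualtwistaxiom} from Proposition~\ref{prop:dual}, in place of balancing, to absorb the twists. Finally, Kirillov's Lemma~2.3 identifies $\delta=\mathfrak u\theta$ as the pivotal structure attached to $\theta$.
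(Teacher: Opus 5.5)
Your naturality and $G$-covariance steps match the paper's argument. The duality step \eqref{eq:pivotaldual}, however, rests on the wrong ingredient.

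\textbf{The gap in the duality step.} Suppose you have an identity of the form $(\mathfrak u_X^{-1})^\vee\simeq\mathfrak u_{X^\vee}\circ(\text{double braiding})$. To finish from there you must turn a double braiding into twists, and only balancing does that. The dual-twist identity \eqref{eq:categoricaldualtwistaxiom} relates $\theta_{X^\vee}$ to $\theta_X^\vee$; it cannot remove a braiding.

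It is in fact logically insufficient. Take an ordinary non-symmetric ribbon category with genuine twist $\vartheta$, such as Fibonacci. The family $\theta=\id$ satisfies the unit and dual-twist conditions. Yet $\delta=u$ violates the duality property, because $u_{X^\vee}$ and $(u_X^{-1})^\vee$ differ by $\vartheta_{X^\vee}^{\pm2}$. So "dual twist in place of balancing" cannot work.

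The fix is already available once \eqref{eq:pivotalmonoidal} is proved, and this is what the paper does:
\begin{enumerate}
\item Naturality of $\delta$ at $\ev_X:X\bo X^\vee\to\mathbf1$, together with $\delta_{\mathbf1}=\id$, monoidality and the evaluation-preservation identity $\ev_{X^{\vee\vee}}=\nu^{-1}(\ev_X)^{\vee\vee}\mu_{X,X^\vee}$, gives $\ev_X=\ev_{X^{\vee\vee}}(\delta_X\bo\delta_{X^\vee})$.
\item Right-dual naturality of $\delta_X^{-1}$ gives $\ev_X=\ev_{X^{\vee\vee}}(\delta_X\bo(\delta_X^{-1})^\vee)$.
\item Mate injectivity then yields \eqref{eq:pivotaldual}.
\end{enumerate}
The identity \eqref{eq:categoricaldualtwistaxiom} plays no role here.

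\textbf{Invertibility.} Your explicit inverse of $\mathfrak u_X$ does not type-check. After $\id_{X^{\vee\vee}}\bo\coev_X$ you are at $X^{\vee\vee}\bo X^\vee\bo X$. But $c^{-1}_{X^\vee,T_gX}$ has domain $X\bo X^\vee$, and $\ev_{X^\vee}$ needs $X^\vee\bo X^{\vee\vee}$. A correct crossed inverse would need a crossing with a pullback of $X^{\vee\vee}$ and $b$-comparisons. The paper avoids this: $\ev_S\neq0$ and invertibility of $c$ give $\mathfrak u_S\neq0$ for simple $S$, so $\mathfrak u_S$ is an isomorphism by Schur, and naturality extends this to direct sums.

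\textbf{Monoidality.} Your direct hexagon chase is a genuinely different route from the paper's. The paper deliberately avoids the six-crossing comparison by passing to $\cC_G^G$. There it identifies the restriction of the ordinary Drinfeld map as $\mathfrak u_Xr_{g,X}^{-1}$ and applies \cite[Proposition~8.9.3]{EGNO} together with the equivariant balancing. It then returns to arbitrary $X,Y$ through the split summand $X\subset\operatorname{Ind}X=\bigoplus_kT_kX$.

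Your route stays inside $\cC_G$, but the hard part is left undone:
\begin{itemize}
\item The twist-free identity to prove is $\mu_{X,Y}(\mathfrak u_X\bo\mathfrak u_Y)=\mathfrak u_{X\bo Y}J^{gh}_{X,Y}c_{T_{gh}Y,T_gX}c_{T_gX,T_hY}$. The double braiding is of $T_gX$ and $T_hY$, not of $X$ and $Y$.
\item The factor $\theta_X\bo\theta_Y$ is moved through it by naturality of $c$, not of $\mathfrak u$, to reach \eqref{eq:defbalance}.
\item The tensorators produced by \eqref{eq:crossedhexone} cancel via action coherence \eqref{eq:actioncoherence}.
\item The real work is eliminating the mixed crossings of $X^\vee$ with $T_{gh}Y$ and of $Y^\vee$ with $T_hX$. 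That requires sliding crossed braidings across evaluation caps, which brings in \eqref{eq:bcomparison}, the unit maps $j_k$ and the crossed unit identities. Your sketch does not address any of this.
\end{itemize}
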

\begin{proof}
We first verify the typing, naturality and invertibility of the
explicit Drinfeld construction directly.  The right-dual mate construction in
Definition~\ref{def:crosseddrinfeld} is natural with respect to
$X$ in each fixed sector.  To prove this, let $f:X\to Y$ be a
morphism with $X,Y\in\cC_g$, and put
$f_X=\ev_Xc_{X^\vee,T_gX}$ and
$f_Y=\ev_Yc_{Y^\vee,T_gY}$.  The two morphisms
$\mathfrak u_YT_g(f)$ and $f^{\vee\vee}\mathfrak u_X$ have source
$T_gX$ and target $Y^{\vee\vee}$.  Under the right-dual mate
bijection for $Y^\vee$, their respective mates belong to
$\operatorname{Hom}(Y^\vee\bo T_gX,\mathbf1)$ and are
$f_Y(\id_{Y^\vee}\bo T_g(f))$ and
$f_X(f^\vee\bo\id_{T_gX})$.  These are equal, since
\[
\begin{aligned}
 f_Y(\id_{Y^\vee}\bo T_g(f))
 &=\ev_Yc_{Y^\vee,T_gY}
       (\id_{Y^\vee}\bo T_g(f))\\
 &=\ev_Y(T_{g^{-1}}T_g(f)\bo\id_{Y^\vee})
       c_{Y^\vee,T_gX}\\
 &=\ev_Y(f\bo\id_{Y^\vee})c_{Y^\vee,T_gX}\\
 &=\ev_X(\id_X\bo f^\vee)c_{Y^\vee,T_gX}\\
 &=\ev_Xc_{X^\vee,T_gX}
       (f^\vee\bo\id_{T_gX})\\
 &=f_X(f^\vee\bo\id_{T_gX}).
\end{aligned}
\]
The second line is naturality of the crossed braiding in its second
input; the third uses the strict underlying identity
$T_{g^{-1}}T_g(f)=f$; the fourth is naturality of evaluation; and
the fifth is naturality of the crossed braiding in its first input.
The mate correspondence is injective, so this proves
$\mathfrak u_Y\,T_g(f)=f^{\vee\vee}\,\mathfrak u_X.$

Now suppose $S$ is a nonzero simple object of $\cC_g$.  The right-dual
triangle identities imply $\ev_S\ne0$: otherwise the triangle
composite equal to $\id_S$ would vanish.  Since
$c_{S^\vee,T_gS}$ is an isomorphism, $f_S=\ev_Sc_{S^\vee,T_gS}$
is nonzero; otherwise composing with the inverse crossed braiding
would give $\ev_S=0$.  The mate correspondence is a linear
isomorphism, hence $\mathfrak u_S\ne0$.  The pullback $T_g$ is an
equivalence and therefore preserves simple objects, and right duality
is a contravariant equivalence in a rigid fusion category, so
$S^{\vee\vee}$ is simple as well.  A nonzero morphism between simple
objects in a semisimple $\mathbb C$-linear category is an isomorphism;
thus $\mathfrak u_S$ is invertible.

Every object of a homogeneous component is a finite direct sum of
simple objects because Xu's construction gives a fusion category
\cite[Theorem~1.1]{Crossed}. For a
decomposition $X\cong\bigoplus_i S_i$, naturality with each inclusion
$\iota_i:S_i\to\bigoplus_iS_i$ gives
$\mathfrak u_{\oplus_iS_i}T_g(\iota_i)
=\iota_i^{\vee\vee}\mathfrak u_{S_i}$.  Since $T_g$ and the double-dual
functor preserve finite biproducts, these equations identify
$\mathfrak u_{\oplus_iS_i}$ with the direct sum of the maps
$\mathfrak u_{S_i}$ under the canonical biproduct identifications.
This handles repeated simple summands as well: every inclusion is
respected, so no off-diagonal component remains.  Conjugating by the
chosen decomposition isomorphism and using naturality proves that
$\mathfrak u_X$ is an isomorphism independently of the decomposition.
For the zero object the unique map $0\to0$ is its identity and is an
isomorphism.  
Since $\theta$ is natural in each homogeneous sector,
$\theta_Yf=T_g(f)\theta_X$.  Consequently
\[
\begin{aligned}
 \delta_Yf
 &=\mathfrak u_Y\theta_Yf
 =\mathfrak u_YT_g(f)\theta_X\\
 &=f^{\vee\vee}\mathfrak u_X\theta_X
 =f^{\vee\vee}\delta_X.
\end{aligned}
\]
Both $\mathfrak u_X$ and $\theta_X$ are isomorphisms, so $\delta_X$
is an isomorphism and is natural on every homogeneous sector.  On a
finite direct sum of sectors, the maps assemble as the direct sum of
the homogeneous maps; no single degree is assigned to a nonhomogeneous
object.

For the unit, use the canonical right-dual identification
$\mathbf1^\vee\cong\mathbf1$, represented by
$\ev_{\mathbf1}=r_{\mathbf1}$ and
$\coev_{\mathbf1}=l_{\mathbf1}^{-1}$.  The crossed unit identity
$r_Yc_{\mathbf1,Y}=l_Y$ at $Y=\mathbf1$, together with the unit
coherence $l_{\mathbf1}=r_{\mathbf1}$, identifies
$c_{\mathbf1,\mathbf1}$ with the identity under
$\mathbf1\bo\mathbf1\cong\mathbf1$.  Substituting these maps into
\eqref{eq:drinfeldformula} and using the triangle identities shows
that $\mathfrak u_{\mathbf1}$ is the canonical unit double-dual map,
which is the identity under this normalization.  Finally,
$\theta_{\mathbf1}=j_1=\id_{\mathbf1}$ by
Definition~\ref{def:twistaxioms} and \eqref{eq:actioncoherence}.
Hence $\delta_{\mathbf1}=\id_{\mathbf1}$ under the canonical unit
identifications.  If the initially fixed right-dual representative
of $\mathbf1$ is not literally $\mathbf1$, the same statement is
read through its unique duality-preserving canonical isomorphism.

For $G$-equivariance, fix $X\in\cC_g$ and $k\in G$.  Put
$Z_k:=T_k(T_gX)=T_{kg}X =T_{kgk^{-1}}(T_kX),$
where both equalities are strict identities of the underlying action
functors.  Thus $T_kX\in\cC_{kgk^{-1}}$, while
$T_k(X^\vee)$ and $(T_kX)^\vee$ lie in
$\cC_{kg^{-1}k^{-1}}$.  The two crossed braidings
$c_{T_k(X^\vee),Z_k}, c_{(T_kX)^\vee,Z_k}$
therefore have the same acted-on first factor: indeed,
$T_{kg^{-1}k^{-1}}Z_k=T_kX$.  This fixes the group-element order needed below; the subsequent
calculation retains the displayed order of all group elements.

Transport the defining pairing by setting
\begin{equation*}
 \widetilde f_{k,X}:=
 j_k^{-1}\,T_k(f_X)\,J^k_{X^\vee,T_gX}:
 T_k(X^\vee)\bo T_k(T_gX)\longrightarrow\mathbf1.
\end{equation*}
Here $J^k_{X^\vee,T_gX}$ first maps to
$T_k(X^\vee\bo T_gX)$, $T_k(f_X)$ then maps to
$T_k\mathbf1$, and $j_k^{-1}:T_k\mathbf1\to\mathbf1$ is applied
last.  In particular, $j_k^{-1}$ cannot be omitted.

Apply \eqref{eq:crossedequiv} to
$A=X^\vee\in\cC_{g^{-1}}$ and $B=T_gX$.  Since
$T_{g^{-1}}T_gX=X$, it gives the concrete identity
\begin{align*}
 T_k(c_{X^\vee,T_gX})J^k_{X^\vee,T_gX}
 &=J^k_{T_{g^{-1}}T_gX,X^\vee}
       c_{T_k(X^\vee),T_k(T_gX)}\\
 &=J^k_{X,X^\vee}
       c_{T_k(X^\vee),T_k(T_gX)}.
\end{align*}
The first crossed-braiding input has degree $g^{-1}$, so the
$J^k$ on the right has ordered subscripts $(X,X^\vee)$, not
$(X^\vee,X)$.  The common target is $T_k(X\bo X^\vee)$.
Consequently, using $f_X=\ev_Xc_{X^\vee,T_gX}$ and then the
actual evaluation comparison \eqref{eq:bcomparison}, we obtain
\begin{align*}
 \widetilde f_{k,X}
 &=j_k^{-1}T_k(\ev_X)
       J^k_{X,X^\vee}c_{T_k(X^\vee),T_k(T_gX)}\\
 &=\ev_{T_kX}
       (\id_{T_kX}\bo b_{k,X})
       c_{T_k(X^\vee),T_k(T_gX)}.
\end{align*}
The second equality is precisely
$\ev_{T_kX}(\id_{T_kX}\bo b_{k,X}) =j_k^{-1}T_k(\ev_X)J^k_{X,X^\vee};$
its domain is $T_kX\bo T_k(X^\vee)$, and
$b_{k,X}:T_k(X^\vee)\to(T_kX)^\vee$ is not being identified
with an identity map.

We next compare this transported pairing with the pairing defining
$\mathfrak u_{T_kX}$.  By the strict action identities,
$T_{kgk^{-1}}(T_kX)=Z_k$.  Naturality of the crossed braiding in
its first input, applied to
$b_{k,X}^{-1}:(T_kX)^\vee\to T_k(X^\vee)$, says
\[
 c_{T_k(X^\vee),Z_k}
   (b_{k,X}^{-1}\bo\id_{Z_k})
 = (\id_{T_kX}\bo b_{k,X}^{-1})
   c_{(T_kX)^\vee,Z_k}.
\]
Both sides have source $(T_kX)^\vee\bo Z_k$ and target
$T_kX\bo T_k(X^\vee)$.  The first tensor factor in the target
of the crossed braiding is $T_kX$, because
$T_{kg^{-1}k^{-1}}Z_k=T_kX$.  Hence the preceding identity and
$b_{k,X}b_{k,X}^{-1}=\id_{(T_kX)^\vee}$ give the exact equality
$f_{T_kX} =\widetilde f_{k,X} (b_{k,X}^{-1}\bo\id_{Z_k}).$
The comparison follows from first-variable naturality of the
crossed braiding and the evaluation comparison.

It remains to compute the mate of $\widetilde f_{k,X}$ rather than
merely asserting that mates commute with transport.  By the defining
right-dual mate identity for $\mathfrak u_X$,
$f_X=\ev_{X^\vee} (\id_{X^\vee}\bo\mathfrak u_X),$
with the fixed associators and unitors inserted.  Applying $T_k$,
using naturality of $J^k$ for $\id_{X^\vee}$ and $\mathfrak u_X$,
and then applying \eqref{eq:bcomparison} to $X^\vee$, yields
\begin{align*}
 \widetilde f_{k,X}
 &=j_k^{-1}T_k(\ev_{X^\vee})
       T_k(\id_{X^\vee}\bo\mathfrak u_X)
       J^k_{X^\vee,T_gX}\\
 &=j_k^{-1}T_k(\ev_{X^\vee})
       J^k_{X^\vee,X^{\vee\vee}}
       (\id_{T_k(X^\vee)}\bo T_k(\mathfrak u_X))\\
 &=\ev_{T_k(X^\vee)}
       (\id_{T_k(X^\vee)}\bo b_{k,X^\vee})
       (\id_{T_k(X^\vee)}\bo T_k(\mathfrak u_X))\\
 &=\ev_{T_k(X^\vee)}
       (\id_{T_k(X^\vee)}\bo
          b_{k,X^\vee}T_k(\mathfrak u_X)).
\end{align*}
In the second line the tensorator naturality identity is
\[
 T_k(\id_{X^\vee}\bo\mathfrak u_X)J^k_{X^\vee,T_gX}
 =J^k_{X^\vee,X^{\vee\vee}}
   (\id_{T_k(X^\vee)}\bo T_k(\mathfrak u_X)).
\]
In the third line we used the evaluation comparison for $X^\vee$,
namely
\[
 \ev_{T_k(X^\vee)}
   (\id_{T_k(X^\vee)}\bo b_{k,X^\vee})
 =j_k^{-1}T_k(\ev_{X^\vee})J^k_{X^\vee,X^{\vee\vee}}.
\]
Thus the right-dual mate of $\widetilde f_{k,X}$, for the right
dual $(T_k(X^\vee))^\vee$ of $T_k(X^\vee)$, is exactly
\[
 \widetilde u_{k,X}:=
 b_{k,X^\vee}T_k(\mathfrak u_X):
 Z_k\longrightarrow (T_k(X^\vee))^\vee.
\]
This computation accounts for both the action tensorator and the
unit comparison, and uses no further action compositor.

For completeness, precomposing a pairing in its first variable has
the following effect on its right-dual mate.  If
$a:A'\to A$, $p=\ev_A(\id_A\bo v):A\bo Z\to\mathbf1$, and
$p'=p(a\bo\id_Z)$, then the mate of $p'$ is $a^\vee v$.
Indeed, the defining evaluation square for the right-dual morphism
$a^\vee:A^\vee\to(A')^\vee$ is
\[
 \ev_{A'}(\id_{A'}\bo a^\vee)
 =\ev_A(a\bo\id_{A^\vee})
 :A'\bo A^\vee\longrightarrow\mathbf1,
\]
so the pairing itself can be rewritten as
\[
\begin{aligned}
 p'
 &=\ev_A(a\bo v)\\
 &=\ev_A(a\bo\id_{A^\vee})(\id_{A'}\bo v)\\
 &=\ev_{A'}(\id_{A'}\bo a^\vee)(\id_{A'}\bo v)\\
 &=\ev_{A'}(\id_{A'}\bo a^\vee v).
\end{aligned}
\]
This is exactly the mate identity; uniqueness of mates completes
the argument.  Apply this with
$A=T_k(X^\vee)$, $A'=(T_kX)^\vee$,
$a=b_{k,X}^{-1}$, and
$v=b_{k,X^\vee}T_k(\mathfrak u_X)$.  The mate of
$f_{T_kX}=\widetilde f_{k,X}(b_{k,X}^{-1}\bo\id_{Z_k})$
is therefore the following identity, with the original label retained:
\begin{equation}\label{eq:drinfeldG}
 \mathfrak u_{T_kX}
 =(b_{k,X}^{-1})^\vee
       b_{k,X^\vee}T_k(\mathfrak u_X)
 =\chi^{(2)}_{k,X}T_k(\mathfrak u_X).
\end{equation}
The directions are
\[
\begin{gathered}
 T_k(\mathfrak u_X):Z_k\to T_k(X^{\vee\vee}),\qquad
 b_{k,X^\vee}:T_k(X^{\vee\vee})\to(T_k(X^\vee))^\vee,\\
 (b_{k,X}^{-1})^\vee:(T_k(X^\vee))^\vee
       \to(T_kX)^{\vee\vee}.
\end{gathered}
\]
Thus $\chi^{(2)}_{k,X}$ has source $T_k(X^{\vee\vee})$ and
target $(T_kX)^{\vee\vee}$, while the whole composite has source
$Z_k=T_{kgk^{-1}}(T_kX)=T_k(T_gX)$ and target $(T_kX)^{\vee\vee}$,
exactly as required.  The only identifications of action objects
used here are strict underlying identities; all monoidal comparison
maps $J^k$, $j_k$, and $b_{k,X}$ remain displayed.

Finally, the crossed-twist covariance in
Definition~\ref{def:twistaxioms}(i) states
$\theta_{T_kX}=T_k(\theta_X)$ under the same strict action
identification.  Both maps have source $T_kX$ and target $Z_k$.
Consequently the already-proved \eqref{eq:drinfeldG} gives the
claimed $G$-equivariance of $\delta$ directly:
\[
\begin{aligned}
 \delta_{T_kX}
 &=\mathfrak u_{T_kX}\theta_{T_kX}\\
 &=\chi^{(2)}_{k,X}T_k(\mathfrak u_X)T_k(\theta_X)\\
 &=\chi^{(2)}_{k,X}T_k(\mathfrak u_X\theta_X)\\
 &=\chi^{(2)}_{k,X}T_k(\delta_X).
\end{aligned}
\]
Every composite here has source $T_kX$ and target $(T_kX)^{\vee\vee}$.
Drinfeld $G$-equivariance, together with the covariance of
$\theta$ established in Lemma~\ref{lem:theta}, proves
\eqref{eq:pivotalG} using \eqref{eq:drinfeldG}.

For the tensor comparison, fix homogeneous objects $X\in\cC_g$ and
$Y\in\cC_h$, and put $W:=X\bo Y$, $D:=W^\vee$,
$q:=ghg^{-1}$, and $\widehat W:=T_{gh}W$.  The two crossed
commutors in the balancing formula have the types
\[
 c_{X,Y}:W\longrightarrow T_gY\bo X,\qquad
 c_{T_gY,X}:T_gY\bo X\longrightarrow T_qX\bo T_gY,
\]
so
$B_{X,Y}:=c_{T_gY,X}c_{X,Y}:W\longrightarrow T_qX\bo T_gY.$
Moreover,
\[
 T_q\theta_X:T_qX\longrightarrow T_qT_gX=T_{gh}X,
 \qquad
 T_g\theta_Y:T_gY\longrightarrow T_gT_hY=T_{gh}Y,
\]
and therefore
\[
 A_{X,Y}:=J^{gh}_{X,Y}(T_q\theta_X\bo T_g\theta_Y):
 T_qX\bo T_gY\longrightarrow T_{gh}W.
\]
Here the equalities $T_qT_g=T_{gh}$ and $T_gT_h=T_{gh}$ concern
only the underlying action functors; the displayed $J^{gh}_{X,Y}$
is still required.  Thus $A_{X,Y}B_{X,Y}:W\to\widehat W$ is
exactly $\theta_W$ by \eqref{eq:defbalance}.
The argument uses the crossed-braiding, rigidity and monoidal-action
identities stated in Section~2 and established in Xu's construction
\cite[Theorem~1.1, Proposition~8.5, Proposition~9.6,
Proposition~10.3]{Crossed}; their concrete VOA realization is not
reproved here.

We make the right-dual comparison used in the mate calculation
explicit.  Let
$d_{X,Y}:D\xrightarrow{\sim}Y^\vee\bo X^\vee$
be the unique right-dual isomorphism characterized by
$\ev_W(\id_W\bo d_{X,Y}^{-1})=\widehat{\ev}_{X,Y},$
where $\widehat{\ev}_{X,Y}:W\bo(Y^\vee\bo X^\vee)\to\mathbf1$
is the composite which, with the indicated associators and unitors,
first applies $\ev_Y:Y\bo Y^\vee\to\mathbf1$ and then
$\ev_X:X\bo X^\vee\to\mathbf1$.  Explicitly, it is
\[
\begin{aligned}
\widehat{\ev}_{X,Y}:={}&\ev_X(\id_X\bo l_{X^\vee})
 (\id_X\bo(\ev_Y\bo\id_{X^\vee}))\\[-2pt]
&\quad\cdot(\id_X\bo a^{-1}_{Y,Y^\vee,X^\vee})
 a_{X,Y,Y^\vee\bo X^\vee}.
\end{aligned}
\]
Similarly, $\widehat{\ev}_{Y^\vee,X^\vee}$ is the evaluation of
$Y^\vee\bo X^\vee$ against its right dual
$X^{\vee\vee}\bo Y^{\vee\vee}$, first using $\ev_{X^\vee}$
and then $\ev_{Y^\vee}$.  With these choices the canonical
monoidal comparison of the double-right-dual functor is
\[
 \mu_{X,Y}=d_{X,Y}^{\vee}\,d_{Y^\vee,X^\vee}^{-1}:
 X^{\vee\vee}\bo Y^{\vee\vee}\longrightarrow W^{\vee\vee}.
\]
In particular, no literal equality $D=Y^\vee\bo X^\vee$ is used.

The precise identity to be established is the following, with the original label retained:
\begin{equation}\label{eq:drinfelddefect}
 \mathfrak u_W A_{X,Y}B_{X,Y}=\mu_{X,Y}(\delta_X\bo\delta_Y),
 \qquad \delta_Z=\mathfrak u_Z\theta_Z.
\end{equation}
For a map $m:W\to W^{\vee\vee}$, write
$m^\flat:=\ev_{W^\vee}(\id_{W^\vee}\bo m)$ for its right-dual
mate.  The two mates initially lie in $\Hom(D\bo W,\mathbf1)$:
\begin{equation*}
\mathcal L_{X,Y}:=\ev_W c_{D,\widehat W}
 (\id_D\bo A_{X,Y}B_{X,Y}),
\qquad
\mathcal R_{X,Y}:=\ev_{W^\vee}
 (\id_D\bo\mu_{X,Y}(\delta_X\bo\delta_Y)).
\end{equation*}
To compare them using the chosen tensor-dual model, precompose both
with $d_{X,Y}^{-1}\bo\id_W$.  This gives two maps in the same
Hom-space $\Hom((Y^\vee\bo X^\vee)\bo W,\mathbf1)$:
\begin{equation*}
\begin{aligned}
\widetilde{\mathcal L}_{X,Y}
&:=\mathcal L_{X,Y}(d_{X,Y}^{-1}\bo\id_W)\\
&=\widehat{\ev}_{X,Y}
 c_{Y^\vee\bo X^\vee,\widehat W}
 (\id_{Y^\vee\bo X^\vee}\bo A_{X,Y}B_{X,Y}),\\
\widetilde{\mathcal R}_{X,Y}
&:=\mathcal R_{X,Y}(d_{X,Y}^{-1}\bo\id_W)\\
&=\widehat{\ev}_{Y^\vee,X^\vee}
 (\id_{Y^\vee\bo X^\vee}\bo\delta_X\bo\delta_Y).
\end{aligned}
\end{equation*}
The first equality uses naturality of the crossed braiding in the
first variable and the defining evaluation property of $d_{X,Y}$.
The second uses $\mu_{X,Y}=d_{X,Y}^{\vee}d_{Y^\vee,X^\vee}^{-1}$
and naturality of right-dual transposition.  Here
$\delta_Z=\mathfrak u_Z\theta_Z$.  By the defining mate identity
\eqref{eq:drinfeldmate},
$\ev_{X^\vee}(\id_{X^\vee}\bo\delta_X) =\ev_Xc_{X^\vee,T_gX}(\id_{X^\vee}\bo\theta_X),$
and the analogous formula holds for $Y$.  Consequently
$\widetilde{\mathcal R}_{X,Y}$ is the iterated contraction of the $X$-pair
and then the $Y$-pair, with no permutation of the right-dual
factors.

For precision, the crossed-hexagon substitutions needed to compare
$\widetilde{\mathcal L}_{X,Y}$ with this contraction are as follows.  The second
crossed hexagon, \eqref{eq:crossedhextwo}, with
$(A,B,Z)=(Y^\vee,X^\vee,\widehat W)$ and degrees
$(h^{-1},g^{-1},gh)$, gives
\begin{equation*}
\begin{aligned}
c_{Y^\vee\bo X^\vee,\widehat W}
={}&a_{W,Y^\vee,X^\vee}
 (c_{Y^\vee,T_hW}\bo\id_{X^\vee})
 a^{-1}_{Y^\vee,T_hW,X^\vee}\\
&\quad\cdot(\id_{Y^\vee}\bo c_{X^\vee,\widehat W})
 a_{Y^\vee,X^\vee,\widehat W}.
\end{aligned}
\end{equation*}
Indeed, $T_{g^{-1}}\widehat W=T_hW$ and
$T_{h^{-1}g^{-1}}\widehat W=W$; this is the ordered product
$(gh)^{-1}=h^{-1}g^{-1}$, not its reversed product.
To split the second inputs of the two resulting crossings, use
naturality with $J^{gh}_{X,Y}$ and $J^h_{X,Y}$, and apply
\eqref{eq:crossedhexone} with
$(A,U,V)=(X^\vee,T_{gh}X,T_{gh}Y), (A,U,V)=(Y^\vee,T_hX,T_hY).$
For either substitution, the explicit form of the first crossed
hexagon is
\begin{equation*}
\begin{aligned}
c_{A,U\bo V}a_{A,U,V}
={}&(J^{a_0}_{U,V}\bo\id_A)
 a^{-1}_{T_{a_0}U,T_{a_0}V,A}
 (\id_{T_{a_0}U}\bo c_{A,V})\\
&\quad\cdot a_{T_{a_0}U,A,V}
 (c_{A,U}\bo\id_V),
\end{aligned}
\end{equation*}
where $a_0=\deg(A)$ (so $a_0=g^{-1}$ in the first
substitution and $a_0=h^{-1}$ in the second).  The required
object identities are
\[
 T_{g^{-1}}T_{gh}X=T_hX,\quad
 T_{g^{-1}}T_{gh}Y=T_hY,\quad
 T_{h^{-1}}T_hX=X,\quad T_{h^{-1}}T_hY=Y.
\]
The corresponding tensorator strings are fixed, not suppressed,
by the action-coherence equations
\[
\begin{aligned}
 T_{g^{-1}}(J^{gh}_{X,Y})J^{g^{-1}}_{T_{gh}X,T_{gh}Y}
 &=J^h_{X,Y},\\
 T_{h^{-1}}(J^h_{X,Y})J^{h^{-1}}_{T_hX,T_hY}
 &=J^1_{X,Y}=\id_{X\bo Y}.
\end{aligned}
\]
Whenever an action is moved across an evaluation cap, the actual
comparison is \eqref{eq:bcomparison},
$\ev_{T_kZ}(\id_{T_kZ}\bo b_{k,Z}) =j_k^{-1}T_k(\ev_Z)J^k_{Z,Z^\vee},$
with $(k,Z)=(g,X),(h,Y),(gh,W)$ as appropriate.  The unit maps
$j_k^{-1}$ are retained; their composites are governed by
$j_{kl}=T_k(j_l)j_k$ and $j_1=\id$.  The duality comparison
also has to respect the double-dual tensorator.  In the present
notation the required equation, with all endpoints displayed, is
\[
\begin{aligned}
&(J^k_{X,Y})^{\vee\vee}
 \mu_{T_kX,T_kY}
 (\chi^{(2)}_{k,X}\bo\chi^{(2)}_{k,Y})\\
&\qquad=\chi^{(2)}_{k,W}
 T_k(\mu_{X,Y})J^k_{X^{\vee\vee},Y^{\vee\vee}}.
\end{aligned}
\]
Both sides have source $T_k(X^{\vee\vee})\bo T_k(Y^{\vee\vee})$
and target $(T_kW)^{\vee\vee}$.  To verify the equation, take
right-dual mates: on the left use the evaluation of
$T_kX\bo T_kY$ transported by $(J^k_{X,Y})^{\vee\vee}$; on
the right use the evaluation of $T_kW$.  Applying
\eqref{eq:bcomparison} to $X$, $Y$ and $W$, and then the defining
evaluation property of $\mu_{X,Y}$, identifies both mate pairings
with the same $j_k^{-1}T_k(\widehat{\ev}_{X,Y})$ after the
monoidal comparison $J^k$.  The right-dual mate correspondence is
injective, so the displayed equation follows.  In particular, this
is a comparison of evaluation morphisms, not an identification of
$b_{k,Z}$ or $J^k$ with an identity.

We finish the mate comparison by passing to the equivariantization.
This argument avoids identifying two six-crossing expressions merely from
their endpoints.  We use the braided structure on $\mathcal E:=\cC^G$
of \cite[Theorem~8.24.3 and (8.108)]{EGNO}; throughout this
paragraph $G$ is finite.  The construction of
$\mathcal E$ requires a monoidal $G$-action and a crossed braiding, but
neither a pivotal structure nor a twist.  The forgetful functor
$U:\mathcal E\to\cC$ is faithful and strict monoidal on the chosen
underlying tensor products.  Right duals in $\mathcal E$ may be chosen
with underlying objects, evaluations and coevaluations equal to those
in $\cC$.  More precisely, the equivariant action on $M^\vee$ is
obtained from that on $M$ by the explicit composite
$T_k(M^\vee)\xrightarrow{b_{k,M}}(T_kM)^\vee\xrightarrow{(r_k^{-1})^\vee}M^\vee$;
its compatibility with evaluation
is exactly \eqref{eq:bcomparison}.  Hence the double-dual monoidal
comparison is carried by $U$ to $\mu_{M,N}$, with the canonical
duality identifications rather than literal identifications of
arbitrarily chosen right duals.

For an equivariant object $(M,r)$ write
$r_k:T_kM\xrightarrow{\sim}M$, so that
$r_{kl}=r_kT_k(r_l)$ under the strict underlying action.
The monoidal equivariant structure on $M\bo N$ is
$r^{M\bo N}_k=(r^M_k\bo r^N_k)(J^k_{M,N})^{-1}.$
For $X=M_g$, let $r_{g,X}:T_gX\to X$ denote the restriction
of $r_g$; its target is $M_g$, since $T_g\cC_g\subset\cC_g$.
Define an automorphism of $M$ by its homogeneous components,
$\widetilde\theta_M|_{M_g}:=r_{g,X}\theta_X:X\longrightarrow X.$
It is a natural automorphism of the identity functor on $\mathcal E$.
Indeed, if $X=M_g$ and $q=kgk^{-1}$, the crossed-twist covariance,
its naturality with respect to $r_k:T_kX\to M_q$, and the equivariant
cocycle relation give
\[
\begin{aligned}
 \widetilde\theta_M\,r_k|_{T_kX}
 &=r_{q,M}\,\theta_{M_q}\,r_k|_{T_kX}\\
 &=r_{q,M}\,T_q(r_k)\,\theta_{T_kX}\\
 &=r_{qk,M}\,T_k(\theta_X)
 =r_k\,T_k(\widetilde\theta_M|_X).
\end{aligned}
\]
Here $qk=kg$ and $\theta_{T_kX}=T_k(\theta_X)$; no
monoidal tensorator was suppressed in this one-object calculation.

We check the ordinary balancing identity rather than assume it.
For $(M,r),(N,s)\in\mathcal E$, take $X=M_g$, $Y=N_h$, and put
$q=ghg^{-1}$.  By \cite[(8.108)]{EGNO}, the ordinary braiding
$\widetilde c$ on $\mathcal E$ has restriction
$\widetilde c_{M,N}|_{X\bo Y} =(s_g|_{T_gY}\bo\id_X)c_{X,Y}.$
Its second application, using naturality of the crossed braiding
in the first variable, is the correctly ordered composite
\[
 (\widetilde c_{N,M}\widetilde c_{M,N})|_{X\bo Y}
 =(r_q|_{T_qX}\bo s_g|_{T_gY})
 c_{T_gY,X}c_{X,Y}.
\]
On the other hand, the tensor equivariant structure and
\eqref{eq:defbalance} yield
\[
\begin{aligned}
 \widetilde\theta_{M\bo N}|_{X\bo Y}
 &=r^{M\bo N}_{gh}\,\theta_{X\bo Y}\\
 &=(r_{gh,M}\bo s_{gh,N})
   (T_q\theta_X\bo T_g\theta_Y)
   c_{T_gY,X}c_{X,Y}.
\end{aligned}
\]
The displayed $J^{gh}_{X,Y}$ in $\theta_{X\bo Y}$ cancels
only against $(J^{gh}_{M,N})^{-1}$ in the actual tensor
equivariant structure, by naturality of $J^{gh}$; the
individual tensorators are not identities.
The preceding equivariance computation, applied respectively to
$r_q:T_qX\to M_{qgq^{-1}}$ and $s_g:T_gY\to N_q$, gives
\[
 \widetilde\theta_M\,r_q|_{T_qX}
   =r_{qg,M}T_q\theta_X=r_{gh,M}T_q\theta_X,
 \qquad
 \widetilde\theta_N\,s_g|_{T_gY}=s_{gh,N}T_g\theta_Y.
\]
Consequently, on every $X\bo Y$ and hence on $M\bo N$,
\[
 \widetilde\theta_{M\bo N}
 =(\widetilde\theta_M\bo\widetilde\theta_N)
 \widetilde c_{N,M}\widetilde c_{M,N}.
\]
The crossed unit condition gives
$\widetilde\theta_{\mathbf1}=\id_{\mathbf1}$.
Thus $\widetilde\theta$ is a twist in the precise sense of
\cite[Definition~8.10.1, (8.32)]{EGNO}; the alternative order
$\widetilde c_{N,M}\widetilde c_{M,N}
(\widetilde\theta_M\bo\widetilde\theta_N)$ agrees by naturality.

Let $\widetilde{\mathfrak u}_M:M\to M^{\vee\vee}$ denote the
ordinary Drinfeld isomorphism associated with $\widetilde c$ and
our chosen \emph{right} duals.  On $X=M_g$ its right-dual mate is
$\ev_X(r_{g^{-1},M}|_{T_{g^{-1}}X}\bo\id_{X^\vee}) c_{X^\vee,X}.$
The equivariant cocycle relation implies
$\,r_{g^{-1},M}|_{T_{g^{-1}}X}
 =T_{g^{-1}}(r_{g,X}^{-1})$; hence crossed-braiding
naturality identifies this pairing with
$\ev_Xc_{X^\vee,T_gX} (\id_{X^\vee}\bo r_{g,X}^{-1}).$
By the defining mate of $\mathfrak u_X$ and its uniqueness,
\[
 U(\widetilde{\mathfrak u}_M)|_X
 =\mathfrak u_Xr_{g,X}^{-1},\qquad
 U(\widetilde{\mathfrak u}_M\widetilde\theta_M)|_X
 =\mathfrak u_X\theta_X=\delta_X.               \tag{*}
\]
The componentwise identity holds also for direct sums, and is
compatible with the chosen double-dual identifications.

For clarity about the ordinary categorical input, apply
\cite[Proposition~8.9.3, (8.31)]{EGNO} to the \emph{reversed
tensor convention}: a right dual in $\mathcal E$ is a left dual in its
reversed monoidal category, whose braiding in the reversed order
is $\widetilde c_{N,M}$.  With the tensor factors returned to
our order, the proposition is the typed identity
\[
 \widetilde{\mathfrak u}_{M\bo N}
 \widetilde c_{N,M}\widetilde c_{M,N}
 =\mu_{M,N}
 (\widetilde{\mathfrak u}_M\bo
  \widetilde{\mathfrak u}_N).
\]
This equation involves the canonical double-right-dual comparison
$\mu_{M,N}$.  Combining it with the just-proved balancing
formula and the naturality of the twist yields, by actual
composition and cancellation of the invertible double braiding,
\[
 \widetilde{\mathfrak u}_{M\bo N}
 \widetilde\theta_{M\bo N}
 =\mu_{M,N}
  (\widetilde{\mathfrak u}_M\widetilde\theta_M
   \bo\widetilde{\mathfrak u}_N\widetilde\theta_N).
                                                        \tag{**}
\]
This is the ordinary Drinfeld--twist tensor identity, obtained
from (8.31) and (8.32).

It remains to transfer (**) to arbitrary homogeneous objects of
$\cC$, not just those carrying an equivariant structure.
For $X\in\cC$, the induction object
\[
 \operatorname{Ind}X:=\bigoplus_{k\in G}T_kX
\]
has the equivariant structure which reindexes
$T_lT_kX=T_{lk}X$ by the strict underlying action.  Its
underlying object contains $X$ as the $k=1$ direct summand.
Let $i_X:X\to U(\operatorname{Ind}X)$ and
$p_X:U(\operatorname{Ind}X)\to X$ be the canonical
inclusion and projection, with $p_Xi_X=\id_X$.
The maps $\delta_Z=\mathfrak u_Z\theta_Z$ are natural on
each homogeneous component by the previously established
naturality of $\mathfrak u$ and $\theta$; on general finite
$G$-graded objects they extend by direct sum.  Therefore the
identities (*) and (**) for
$(\operatorname{Ind}X,\operatorname{Ind}Y)$, composed with
$i_X\bo i_Y$ and with the corresponding double-dual comparison,
give by naturality
$\delta_{X\bo Y}=\mu_{X,Y}(\delta_X\bo\delta_Y).$
Explicitly, the equality after precomposition with
$i_X\bo i_Y$ is detected by the split projection
$p_X\bo p_Y$, and the naturality of $\mu$ and $\delta$
identifies its restriction with the displayed equality on
$X\bo Y$.  The split inclusion and projection make the argument valid for
arbitrary $X,Y$ in $\cC$.
Finally, by the already established \eqref{eq:defbalance},
$\theta_W=A_{X,Y}B_{X,Y}$, and hence
$\mathfrak u_W A_{X,Y}B_{X,Y} =\delta_W=\mu_{X,Y}(\delta_X\bo\delta_Y).$
This proves \eqref{eq:drinfelddefect} independently of
\eqref{eq:pivotalmonoidal}; the latter is its immediate
consequence.  Applying the injective right-dual mate correspondence
and precomposing with $d_{X,Y}^{-1}\bo\id_W$ also proves the
previously displayed transported-mate equality
$\widetilde{\mathcal L}_{X,Y}=\widetilde{\mathcal R}_{X,Y}$.
Thus the original six-crossing comparison follows from the
equivariantization construction and the ordinary braided Drinfeld
identity, using the crossed-braiding and monoidal-action data of
\cite[Theorem~1.1, Proposition~8.5, Proposition~9.6]{Crossed}.

Finally, fix $X\in\cC_g$.  The two maps in
\eqref{eq:pivotaldual} have the same source and target.  Indeed,
$X^\vee\in\cC_{g^{-1}}$, and
\[
 \delta_{X^\vee}=\mathfrak u_{X^\vee}\theta_{X^\vee}:
 X^\vee\longrightarrow X^{\vee\vee\vee},
 \qquad
 (\delta_X^{-1})^\vee:
 X^\vee\longrightarrow X^{\vee\vee\vee}.
\]
More explicitly, $\theta_X^{-1}:T_gX\to X$ and
$\mathfrak u_X^{-1}:X^{\vee\vee}\to T_gX$, so the contravariance
of the chosen right-dual functor gives
\[
 (\delta_X^{-1})^\vee
 =(\mathfrak u_X^{-1})^\vee(\theta_X^{-1})^\vee,
 \quad
 X^\vee\xrightarrow{(\theta_X^{-1})^\vee}(T_gX)^\vee
 \xrightarrow{(\mathfrak u_X^{-1})^\vee}X^{\vee\vee\vee}.
\]
The categorical dual-twist identity
\eqref{eq:categoricaldualtwistaxiom} is an equality in
$\operatorname{Hom}(X^\vee,T_{g^{-1}}X^\vee)$, with the action--dual
comparison $b_{g,X}$ retained.  It concerns the twist itself and
is not an identification of $\theta_X^\vee$ with
$(\theta_X^{-1})^\vee$.  The required comparison of the complete
maps $\delta$ follows from the already established monoidality,
by the uniqueness of right-dual mates.

Write $D^2Z=Z^{\vee\vee}$ and let
$\nu:\mathbf1\xrightarrow{\sim}D^2\mathbf1$ be the canonical
unit comparison of the double-right-dual functor.  Its monoidal
structure is the canonical $\mu_{U,V}$ occurring in
\eqref{eq:pivotalmonoidal}.  With the fixed right-dual
representatives and their canonical comparison isomorphisms, the
defining evaluation-preservation identity for this strong monoidal
functor is
\[
 \ev_{X^{\vee\vee}}
 =\nu^{-1}(\ev_X)^{\vee\vee}\mu_{X,X^\vee}:
 X^{\vee\vee}\bo X^{\vee\vee\vee}\longrightarrow\mathbf1.
\]
Here $D^2(X^\vee)=X^{\vee\vee\vee}$ is the chosen right dual of
$D^2X=X^{\vee\vee}$.  This evaluation identity is part of the
canonical monoidal structure on $D^2$, not a consequence of the
desired pivotal-duality formula; it follows directly by transporting
$\ev_X$ through the two right-dual adjunctions.  The associators,
unitors and canonical identifications of the iterated right duals
are understood in this identity.

Naturality of $\delta$ for the evaluation morphism
$\ev_X:X\bo X^\vee\to\mathbf1$, together with
$\delta_{\mathbf1}=\nu$ and the already proved
\eqref{eq:pivotalmonoidal}, gives two morphisms
$X\bo X^\vee\to\mathbf1$ which are equal:
\[
\begin{aligned}
 \ev_X
 &=\nu^{-1}(\ev_X)^{\vee\vee}\delta_{X\bo X^\vee}\\
 &=\nu^{-1}(\ev_X)^{\vee\vee}\mu_{X,X^\vee}
       (\delta_X\bo\delta_{X^\vee})\\
 &=\ev_{X^{\vee\vee}}
       (\delta_X\bo\delta_{X^\vee}).
\end{aligned}
\]
On the other hand, right-dual naturality for the inverse isomorphism
$\delta_X^{-1}:X^{\vee\vee}\to X$ states precisely that
\[
 \ev_{X^{\vee\vee}}
   (\id_{X^{\vee\vee}}\bo(\delta_X^{-1})^\vee)
 =\ev_X(\delta_X^{-1}\bo\id_{X^\vee})
 :X^{\vee\vee}\bo X^\vee\longrightarrow\mathbf1.
\]
Precomposing with $\delta_X\bo\id_{X^\vee}$ therefore yields
$\ev_X=\ev_{X^{\vee\vee}} (\delta_X\bo(\delta_X^{-1})^\vee).$
Consequently the two maps
$\delta_{X^\vee},(\delta_X^{-1})^\vee:
 X^\vee\to X^{\vee\vee\vee}$ have identical right-dual
pairings after precomposition with the isomorphism $\delta_X$.
Since $X^{\vee\vee\vee}$ is the right dual of $X^{\vee\vee}$,
the right-dual mate correspondence is injective.  Cancelling
$\delta_X$ in this correspondence gives
$\delta_{X^\vee}=(\delta_X^{-1})^\vee,$
as required.  This argument applies to every homogeneous object,
including zero and finite direct sums, and uses neither a prior
pivotal-duality assertion nor any ribbon or modularity theorem.
It is the evaluation proof of the general duality property of a
monoidal pivotal transformation
\cite[Definition~4.7.7 and Exercise~4.7.9]{EGNO}.
The relation between the ordinary ribbon twist and equality of left
and right traces is given in \cite[Proposition~8.10.12]{EGNO}.
\end{proof}

\subsection{Verification of the crossed ribbon axiom}\label{subsec:ribbonverification}

\begin{theorem}\label{thm:ribbon}
Let $V$ be strongly rational and $G\leq\Aut(V)$ finite, and let
$\cC_G$ be Xu's faithfully $G$-graded $G$-crossed braided fusion
category \cite[Theorem~1.1]{Crossed}, with action and crossed-commutor
normalizations \cite[Proposition~8.5, Proposition~9.6]{Crossed} and
the pairing and rigidity data
\cite[Lemma~10.1, Proposition~10.3]{Crossed}. Then the conformal twist
equips $\cC_G$ with a $G$-crossed ribbon
fusion structure in Kirillov's sense.
\end{theorem}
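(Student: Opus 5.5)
The proof will be an assembly of results already established, checked against Definition~\ref{def:crossedmodular} and the four typed conditions of Definition~\ref{def:twistaxioms}. First, the underlying data. By \cite[Theorem~1.1]{Crossed}, $\cC_G$ is a faithfully $G$-graded braided $G$-crossed fusion category. It comes with the monoidal action $(T_k,J^k,j_k)$ satisfying \eqref{eq:actioncoherence}, the crossed braiding satisfying \eqref{eq:crossedequiv}--\eqref{eq:crossedhextwo} and the crossed unit identities, and rigidity from \cite[Proposition~10.3]{Crossed}. We fix right duals and take $b_{k,X}$ from Lemma~\ref{lem:dualactioncomparison}, so that every map appearing in the axioms is well typed.

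Next, the four conditions in order.
\begin{enumerate}[label=\textup{(\alph*)},leftmargin=*]
\item Unit and covariance, condition~(i). Lemma~\ref{lem:theta} gives directly that $\theta_X=e^{2\pi iL_X(0)}:X\to T_gX$ is a natural isomorphism of twisted modules, that $\theta_{T_kX}=T_k(\theta_X)$ under the strict underlying action, and that $\theta_V=\id_V=j_1$.
\item Crossed balancing, condition~(ii). This is exactly Theorem~\ref{thm:balance}, with $q=ghg^{-1}$ and the tensorator $J^{gh}_{X,Y}$ retained. The theorem is stated for homogeneous pairs, so I will extend it additively to finite sums of homogeneous objects.
\item Duality, condition~(iii). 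This is the final assertion of Proposition~\ref{prop:dual}. It combines \eqref{eq:dualtwist}, \eqref{eq:dualactionphi} and naturality of $\theta$ along $\phi_X$, and yields \eqref{eq:categoricaldualtwistaxiom}.
\end{enumerate}
These are the typed forms of Kirillov's four conditions in \cite[Lemma~2.3]{Kirillov}. What remains is to check that our non-strict conventions translate his. His $R_k$ corresponds to our $T_k$, his suppressed identification $(R_kV)^*\cong R_k(V^*)$ corresponds to $b_{k,X}$, and his suppressed monoidal constraints correspond to $J^k$ and $j_k$.

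The main obstacle is the rest of Kirillov's framework rather than the four conditions themselves. Kirillov's Lemma~2.3 presupposes a pivotal structure compatible with the twist, and it uses categorical traces for the extended $\widetilde S$. I would supply this pivotal structure from Proposition~\ref{prop:pivotal}. There $\delta_X=\mathfrak u_X\theta_X$ is a natural isomorphism $X\to X^{\vee\vee}$. It is monoidal by \eqref{eq:pivotalmonoidal}, $G$-equivariant through $\chi^{(2)}$ by \eqref{eq:pivotalG}, and self-dual by \eqref{eq:pivotaldual}. By construction $\theta$ is the twist associated with $\delta$ and the crossed Drinfeld morphism $\mathfrak u$, which is precisely the correspondence in Kirillov's lemma between ribbon twists and pivotal structures.

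I would also record the equivariantized twist $\widetilde\theta$ built in that proof as a consistency check. It is a genuine twist on $\cC_G^G$, and identity (**) shows that its pivotal structure is $U(\widetilde{\mathfrak u}\widetilde\theta)$. By \cite[Proposition~8.10.12]{EGNO}, a balanced twist satisfying the dual-twist identity gives equal left and right traces. Pulling this back along the faithful forgetful functor and the induction argument of Proposition~\ref{prop:pivotal}, I would conclude that $\delta$ is spherical on every sector, and that the traces entering Kirillov's $\widetilde S$ are the ones defined by this pivotal structure. With these verified, the conformal twist endows $\cC_G$ with a $G$-crossed ribbon fusion structure in Kirillov's sense.
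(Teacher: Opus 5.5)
Your proposal is correct and follows the paper's route. Conditions (i)--(iii) come from Lemma~\ref{lem:theta}, Theorem~\ref{thm:balance} and Proposition~\ref{prop:dual}, the pivotal datum $\delta_X=\mathfrak u_X\theta_X$ with its monoidal, $G$-equivariant and dual compatibilities comes from Proposition~\ref{prop:pivotal}, and everything extends additively from homogeneous objects.

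Your extra sphericality check through the equivariantized twist is not in the paper's proof and is not required by Definition~\ref{def:twistaxioms}. If you keep it, it needs the ribbon identity $\Theta_{A^\vee}=(\Theta_A)^\vee$ on $\cC_G^G$, which the paper proves only later, in Subsection~\ref{subsec:equivariantization}, from the crossed dual-twist identity.
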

\begin{proof}
We compare the structure maps with the four conditions in
\cite[Lemma~2.3]{Kirillov}, using its definition of a
$G$-equivariant fusion category in Section~2.  The underlying faithfully graded rigid fusion category, strong
monoidal $G$-action, and crossed braiding are provided by Xu's
construction \cite[Theorem~1.1, Proposition~8.5,
Proposition~9.6, Proposition~10.3]{Crossed}.
The action has strict underlying functorial multiplication
$T_kT_l=T_{kl}$ and $T_1=\id$, but its tensor and unit comparisons
$J^k_{A,B}:T_kA\bo T_kB\to T_k(A\bo B)$ and
$j_k:\mathbf1\to T_k\mathbf1$ are retained throughout.

For the unit condition, the vacuum module $V=\mathbf1$ lies in
$\cC_1$, and Lemma~\ref{lem:theta} gives
$\theta_{\mathbf1}=e^{2\pi iL_V(0)}=\id_{\mathbf1}=j_1$.
In particular, this uses $j_1=\id$, not an identification of
$j_k$ with the identity for arbitrary $k$.  For $X\in\cC_g$,
$\theta_X:X\to T_gX$ is an invertible natural morphism by that
lemma.  If $k\in G$, put $g'=kgk^{-1}$.  Both
\[
 \theta_{T_kX}:T_kX\longrightarrow T_{g'}T_kX,
 \qquad T_k(\theta_X):T_kX\longrightarrow T_kT_gX
\]
have target $T_{kg}X$, since $g'k=kg$, and they are equal by
Lemma~\ref{lem:theta}.  This establishes the action covariance
condition without discarding any monoidal structure of $T_k$.

For balancing, let $Y\in\cC_h$ and $q=ghg^{-1}$.
The ordered crossed braidings have types
\[
 \begin{gathered}
 c_{X,Y}:X\bo Y\longrightarrow T_gY\bo X,\\
 c_{T_gY,X}:T_gY\bo X\longrightarrow T_qX\bo T_gY.
 \end{gathered}
\]
Here $T_gY\in\cC_q$, and the second target has degree
$qg=gh$.  The two twisted input morphisms are
$T_q(\theta_X):T_qX\to T_qT_gX=T_{gh}X$ and
$T_g(\theta_Y):T_gY\to T_gT_hY=T_{gh}Y$.
Thus the following is an equality in
$\Hom(X\bo Y,T_{gh}(X\bo Y))$ by
Theorem~\ref{thm:balance}:
\[
 \theta_{X\bo Y}
 =J^{gh}_{X,Y}\bigl(T_q(\theta_X)\bo T_g(\theta_Y)\bigr)
       c_{T_gY,X}c_{X,Y}.
\]
This is Kirillov's second condition: his
$R_{{}^gY,X}R_{X,Y}$ is the indicated \emph{ordered} pair of crossed
braidings, and the omitted canonical identification from the
separate output twists to the twist of $X\bo Y$ is precisely the
strong-monoidal comparison $J^{gh}_{X,Y}$ together with
$T_qT_g=T_{gh}=T_gT_h$.  The group factors remain in their prescribed order, and both arrows
are the crossed braidings displayed above.

The right-dual condition has the same content as Kirillov's third
condition after restoring its suppressed action--dual comparison.
Indeed, for $X\in\cC_g$, its chosen right dual lies in
$\cC_{g^{-1}}$, and Lemma~\ref{lem:dualactioncomparison} supplies
the natural isomorphism
$b_{g,X}:T_g(X^\vee)\xrightarrow{\sim}(T_gX)^\vee$.
Consequently the two factors in
$\theta_{X^\vee} =T_{g^{-1}}(\theta_X^\vee)\, T_{g^{-1}}(b_{g,X})$
have respective types
\[
 \begin{gathered}
 T_{g^{-1}}(b_{g,X}):
 X^\vee=T_{g^{-1}}T_g(X^\vee)
 \longrightarrow T_{g^{-1}}((T_gX)^\vee),\\
 T_{g^{-1}}(\theta_X^\vee):
 T_{g^{-1}}((T_gX)^\vee)
 \longrightarrow T_{g^{-1}}(X^\vee).
 \end{gathered}
\]
Thus both sides lie in
$\Hom(X^\vee,T_{g^{-1}}X^\vee)$; the equality is
\eqref{eq:categoricaldualtwistaxiom}, proved in
Proposition~\ref{prop:dual}.  Under the canonical identification
$(T_gX)^\vee\simeq T_g(X^\vee)$, this is exactly
$\theta_{X^\vee}=T_{g^{-1}}(\theta_X^\vee)$ in
\cite[Lemma~2.3]{Kirillov}, with the contravariant dual
$\theta_X^\vee:(T_gX)^\vee\to X^\vee$ in its correct direction.

There is also the pivotal datum required by Kirillov's definition
of a $G$-equivariant fusion category.  With the same chosen right
duals, Proposition~\ref{prop:pivotal} constructs
$\delta_X=\mathfrak u_X\theta_X:X\xrightarrow{\sim}X^{\vee\vee}$.
It proves naturality and the unit normalization, together with
\[
 \begin{aligned}
 \delta_{X\bo Y}&=\mu_{X,Y}(\delta_X\bo\delta_Y),\\
 \delta_{T_kX}&=\chi^{(2)}_{k,X}T_k(\delta_X),\\
 \delta_{X^\vee}&=(\delta_X^{-1})^\vee.
 \end{aligned}
\]
Here $\mu_{X,Y}$ is the canonical monoidal structure of the double
right dual, while
$\chi^{(2)}_{k,X}=(b_{k,X}^{-1})^\vee b_{k,X^\vee}$
is the action--double-dual comparison; neither is being treated as
an identity map.  These identities give the monoidal, equivariant
and dual-compatible double-dual structure required in
\cite[Section~2 and Lemma~2.3]{Kirillov}.  The crossed twist
identities above are established independently of any assertion
about the ribbon structure of the equivariantization.

All these identities are natural and additive in homogeneous
objects.  On an arbitrary object $M=\bigoplus_g M_g$ of $\cC_G$,
the crossed twist is the componentwise family
$\bigoplus_g\theta_{M_g}:\bigoplus_gM_g\to
\bigoplus_gT_gM_g$; it is not identified with the action of a
single group element on $M$.  Since tensor product, duality,
the action and crossed braidings preserve finite direct sums, the
conditions just checked extend componentwise to all objects.
This verifies Kirillov's four twist conditions and the required
pivotal compatibility, proving the assertion.
\end{proof}

\section{Modularity and equivariantization}

\subsection{Equivariantization and nondegeneracy}\label{subsec:equivariantization}
\begin{definition}\label{def:equivariantization}
The \emph{equivariantization} $\cB=\cC_G^G$ has objects
$A=(\underline A,(u_k^A)_{k\in G})$, where
$u_k^A:T_k\underline A\xrightarrow{\sim}\underline A$,
$u_1^A=\id$, and $u_{kl}^A=u_k^A T_k(u_l^A)$.
A morphism $f:A\to B$ is a morphism
$f:\underline A\to\underline B$ satisfying
$f u_k^A=u_k^B T_k(f)$ for all $k\in G$.
Writing $\underline A=\bigoplus_{g\in G}A_g$, the grading
and the action imply
$u_k^A:T_kA_g\xrightarrow{\sim}A_{kgk^{-1}}$
on each homogeneous summand.  In particular, the restriction
$u_g^A:T_gA_g\to A_g$ is defined, and no equivariant object
is assumed homogeneous.

The tensor product has underlying object $\underline A\bo\underline B$
and equivariant structure
\[
 u_k^{A\bo B}=(u_k^A\bo u_k^B)
   (J^k_{\underline A,\underline B})^{-1}:
 T_k(\underline A\bo\underline B)
 \longrightarrow\underline A\bo\underline B.
\]
Indeed, the action coherence \eqref{eq:actioncoherence},
naturality of $J^k$, and the two cocycle identities give
\[
\begin{aligned}
 u_k^{A\bo B}T_k(u_l^{A\bo B})
 &= (u_k^A\bo u_k^B)
    (T_k(u_l^A)\bo T_k(u_l^B))
    (J^{kl}_{\underline A,\underline B})^{-1}\\
 &=u_{kl}^{A\bo B}.
\end{aligned}
\]
The identity at $k=1$ follows from $J^1=\id$.
The tensor unit is $(\mathbf1,(j_k^{-1})_k)$; its
cocycle is $j_{kl}^{-1}=j_k^{-1}T_k(j_l^{-1})$.
The monoidal action axioms for $J^k,j_k$ make the original
associator and unitors equivariant morphisms.

For homogeneous $X=A_g$ and $Y=B_h$, put $q=ghg^{-1}$.
The braiding is defined componentwise by
\[
 \widetilde c_{A,B}|_{X\bo Y}
  =(u_g^B|_{T_gY}\bo\id_X)c_{X,Y}:
  X\bo Y\longrightarrow B_q\bo X
  \subseteq\underline B\bo\underline A.
\]
For a nonhomogeneous second argument it is the direct sum of
these arrows; the target is not a fixed $Y\bo X$ unless the
grading happens to be preserved by conjugation.  The formula
is extended by finite direct sums.  Naturality,
invertibility, equivariance and both ordinary hexagons follow
from the crossed-braiding and action axioms
\eqref{eq:crossedequiv}--\eqref{eq:crossedhextwo} with their
unit constraints and $J^k$ retained; this is the braided
equivariantization construction
\cite[Definition~4.41]{DGNO}\cite[Theorem~8.24.3, (8.108)]{EGNO}.

Choose the right dual $M^\vee$ of $M=\underline A$, with
evaluation $\ev_M:M\bo M^\vee\to\mathbf1$ and coevaluation
$\coev_M:\mathbf1\to M^\vee\bo M$.  Its equivariant action is
\[
 \widehat u_k=( (u_k^A)^{-1})^\vee b_{k,M}:
 T_k(M^\vee)\xrightarrow{\sim}M^\vee.
\]
Here $b_{k,M}:T_k(M^\vee)\to(T_kM)^\vee$ is the
canonical evaluation comparison of
Lemma~\ref{lem:dualactioncomparison}; the displayed composite
uses $( (u_k^A)^{-1})^\vee:(T_kM)^\vee\to M^\vee$.
Its cocycle follows from the contravariant naturality of $b$
and $b_{kl,M}=b_{k,T_lM}T_k(b_{l,M})$:
\[
\begin{aligned}
 \widehat u_k T_k(\widehat u_l)
 &=( (u_k^A)^{-1})^\vee b_{k,M}
   T_k(( (u_l^A)^{-1})^\vee)T_k(b_{l,M})\\
 &=( (u_k^A)^{-1})^\vee
    (T_k((u_l^A)^{-1}))^\vee b_{kl,M}
   =\widehat u_{kl}.
\end{aligned}
\]
Also $\widehat u_1=\id$.  More precisely, the defining
evaluation comparison and right-dual naturality yield
\[
\begin{aligned}
 &\ev_M(u_k^A\bo\widehat u_k)\\
 &\quad=\ev_{T_kM}(\id_{T_kM}\bo b_{k,M})
  =j_k^{-1}T_k(\ev_M)J^k_{M,M^\vee}.
\end{aligned}
\]
Thus $\ev_M$ is equivariant, since the unit action is $j_k^{-1}$.
The corresponding transported coevaluation satisfies
$(\widehat u_k\bo u_k^A)(J^k_{M^\vee,M})^{-1} T_k(\coev_M)j_k=\coev_M.$
To see this identity, apply the two right-dual triangle
identities to the transported evaluation above: both sides
are coevaluations for the same chosen evaluation and hence
coincide by right-dual uniqueness.  Consequently evaluation,
coevaluation and both triangle identities are morphisms in $\cB$.
These constructions identify $\cB$ with Kirillov's orbifold
category $\cC/G$ in \cite[Definition~3.1 and Theorem~3.9]{Kirillov},
with the monoidal and dual comparisons suppressed in that source
made explicit here.
\end{definition}
For $A\in\cB$ with homogeneous decomposition
$\underline A=\bigoplus_{g\in G}A_g$, define
\begin{equation}\label{eq:thetab}
 \Theta_A=\bigoplus_{g\in G}(u_g^A|_{T_gA_g})\,\theta_{A_g}.
\end{equation}
Because $g$ fixes its own conjugacy class element, each summand
in \eqref{eq:thetab} is an automorphism of $A_g$.
These automorphisms are natural in equivariant morphisms.
For $X=A_g$, $k\in G$, and $q=kgk^{-1}$, the twist covariance,
twist naturality for $u_k^A$, and the cocycle give, as maps
$T_kX\to A_q$,
\[
\begin{aligned}
 \Theta_Au_k^A|_{T_kX}
 &=u_q^A\theta_{A_q}u_k^A
  =u_q^A T_q(u_k^A)\theta_{T_kX}\\
 &=u_{qk}^A T_k(\theta_X)
  =u_k^A T_k(u_g^A\theta_X)
  =u_k^A T_k(\Theta_A|_X).
\end{aligned}
\]
Hence $\Theta_A$ is an equivariant natural isomorphism.
Its unit value is the identity.  For $X=A_g$, $Y=B_h$, put $q=ghg^{-1}$.  The crossed
balancing identity \eqref{eq:defbalance} and the inverse
tensorator in the equivariant tensor product give
\[
\begin{aligned}
 \Theta_{A\bo B}|_{X\bo Y}
 &= (u_{gh}^A\bo u_{gh}^B)
    (J^{gh}_{X,Y})^{-1}\theta_{X\bo Y}\\
 &= (u_{gh}^A T_q\theta_X\bo u_{gh}^B T_g\theta_Y)
    c_{T_gY,X}c_{X,Y}.
\end{aligned}
\]
Thus $(J^{gh}_{X,Y})^{-1}$ and $J^{gh}_{X,Y}$ cancel
only after their domains are correctly matched.  On the other
hand, using the degrees $g,q$ of the successive first inputs
of the two ordinary braidings and crossed-braiding naturality
for $u_g^B:T_gY\to B_q$, we obtain
\[
\begin{aligned}
 &\widetilde c_{B,A}\widetilde c_{A,B}|_{X\bo Y}\\
 &\quad=(u_q^A\bo\id_{B_q})c_{B_q,X}
       (u_g^B\bo\id_X)c_{X,Y}\\
 &\quad=(u_q^A\bo u_g^B)c_{T_gY,X}c_{X,Y}.
\end{aligned}
\]
The already established equivariance of $\Theta$, together
with $u_q^A T_q(u_g^A)=u_{qg}^A$ and
$u_g^B T_g(u_h^B)=u_{gh}^B$, gives the following identities
with the indicated sources and targets:
\[
 \Theta_Au_q^A|_{T_qX}=u_{gh}^A T_q\theta_X,
 \qquad
 \Theta_Bu_g^B|_{T_gY}=u_{gh}^B T_g\theta_Y,
\]
where $qg=gh$.  Substituting these two identities in the
previous crossing computation yields
$\Theta_{A\bo B} =(\Theta_A\bo\Theta_B) \widetilde c_{B,A}\widetilde c_{A,B}.$
The proof uses neither a prior ribbon structure on $\cB$ nor
any nondegeneracy statement.

It remains to check the right-dual twist identity independently.
Let $X=A_g$ and write $u_g^X:T_gX\to X$ and
$u_{g^{-1}}^X:T_{g^{-1}}X\to X$ for the corresponding
restrictions of the equivariant structure.  The right-dual
summand $X^\vee$ has degree $g^{-1}$.  By the crossed
dual-twist identity \eqref{eq:categoricaldualtwistaxiom},
contravariant naturality of $b$, and its group coherence,
the following are equal morphisms $X^\vee\to X^\vee$:
\[
\begin{aligned}
 \Theta_{A^\vee}|_{X^\vee}
 &=\widehat u_{g^{-1}}\theta_{X^\vee}\\
 &=((u_{g^{-1}}^X)^{-1})^\vee b_{g^{-1},X}
   T_{g^{-1}}(\theta_X^\vee)T_{g^{-1}}(b_{g,X})\\
 &=((u_{g^{-1}}^X)^{-1})^\vee
   (T_{g^{-1}}\theta_X)^\vee
   b_{g^{-1},T_gX}T_{g^{-1}}(b_{g,X})\\
 &=((u_{g^{-1}}^X)^{-1})^\vee
   (T_{g^{-1}}\theta_X)^\vee\\
 &=\bigl((T_{g^{-1}}\theta_X)(u_{g^{-1}}^X)^{-1}\bigr)^\vee
   =(u_g^X\theta_X)^\vee.
\end{aligned}
\]
The third equality is the naturality relation
$b_{g^{-1},X}T_{g^{-1}}(\theta_X^\vee)
=(T_{g^{-1}}\theta_X)^\vee b_{g^{-1},T_gX}$;
the fourth is the actual coherence
$b_{g^{-1},T_gX}T_{g^{-1}}(b_{g,X})=b_{1,X}=\id_{X^\vee}$.
For the final equality, twist naturality for
$u_{g^{-1}}^X:T_{g^{-1}}X\to X$, followed by $u_g^X$,
gives
\[
 u_g^X\theta_Xu_{g^{-1}}^X
 =u_g^XT_g(u_{g^{-1}}^X)\theta_{T_{g^{-1}}X}
 =T_{g^{-1}}(\theta_X),
\]
where $u_g^XT_g(u_{g^{-1}}^X)=\id_X$ and
$\theta_{T_{g^{-1}}X}=T_{g^{-1}}(\theta_X)$.
The identity extends to arbitrary equivariant objects by
finite direct sums and canonical chosen-dual comparisons.
We have thereby proved
$\Theta_{A^\vee}=(\Theta_A)^\vee$.

Thus $\cB$ is a braided ribbon fusion category, with the braiding,
unit twist, balancing and right-dual twist compatibility verified
above. Kirillov's \cite[Theorem~3.9]{Kirillov} produces this twist
and braiding, and \cite[Proposition~8.10.12]{EGNO} supplies the
corresponding spherical pivotal structure.

\begin{remark}\label{rem:literatureconventions}
In \cite[Definition~4.41]{DGNO} the data are phrased as a braided
$G$-crossed fusion category and the group action is written
$g(X)$; in our notation it is $T_gX$.  Their crossed-braiding
map has the same source and target as $c_{X,Y}$, and their
$G$-grading is our external direct sum.  The identities referred
to as action compatibility and crossed hexagons in that source
are established by Xu's construction
\cite[Theorem~1.1, Proposition~8.5, Proposition~9.6]{Crossed}; it
fixes the positive-half-turn normalization used here. Proposition~4.56(ii)
of \cite{DGNO} gives the nondegeneracy criterion in terms of faithful
grading and nondegeneracy of the identity component. Kirillov's
Theorem~3.9 supplies the braided ribbon structure on $\cC/G$, and
his Theorem~10.5 identifies modularity with invertibility of the
extended $S$-operator.
\end{remark}

\begin{proposition}\label{prop:nondeg}
Let $V$ be strongly rational and $G\leq\Aut(V)$ finite, and let
$\cC_G$ be the faithfully $G$-graded $G$-crossed braided fusion
category constructed in \cite[Theorem~1.1]{Crossed}. Then its equivariantization $\cB$ is a nondegenerate braided fusion
category.
\end{proposition}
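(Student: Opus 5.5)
The proof will use the criterion of \cite[Proposition~4.56(ii)]{DGNO}, as announced in Remark~\ref{rem:literatureconventions}. For a braided $G$-crossed fusion category $\cC_G=\bigoplus_g\cC_g$, the equivariantization $\cC_G^G$ is nondegenerate if and only if two conditions hold: the grading is faithful, and the neutral component $\cC_1$, with the restricted braiding, is nondegenerate. The plan is to verify these two hypotheses for Xu's category. No new analytic input should be needed beyond Proposition~\ref{prop:neutralcomparison}.

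\emph{Faithfulness.} Faithfulness is part of \cite[Theorem~1.1]{Crossed}. Independently, each $\cC_g$ is nonzero by \cite[Theorem~9.1]{DLMModular}, recalled in Section~\ref{subsec:modulecategory}. That theorem gives an irreducible $g$-twisted module for every finite-order $g$ under simplicity and $C_2$-cofiniteness. By regularity and Proposition~\ref{prop:modulecomparison}, this module is an ordinary grading-restricted object of $\cC_g$.

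\emph{Nondegeneracy of $\cC_1$.} The degree-one part of the crossed braiding is $c_{X,Y}:X\bo Y\to T_1Y\bo X=Y\bo X$. This is an ordinary braiding, and $(\cC_1,\bo_D,c^D,a^D)$ is a braided fusion category. Proposition~\ref{prop:neutralcomparison} gives a braided monoidal equivalence $F:\cC_1\to\Rep(V)$ with the Huang--Lepowsky structure. By Huang's theorem \cite{Huang}, $\Rep(V)$ is a modular tensor category. In particular its M\"uger center is $\mathrm{Vec}$. A braided equivalence preserves double braidings, since $F(c_{B,A}c_{A,B})$ corresponds to the double braiding of $FA,FB$ after conjugation by $F_2$. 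It therefore carries $\mathcal Z_2(\cC_1)$ onto $\mathcal Z_2(\Rep(V))$, so $\cC_1$ is nondegenerate.

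\emph{Conclusion.} Apply \cite[Proposition~4.56(ii)]{DGNO} to the braided equivariantization $\cB$ of Definition~\ref{def:equivariantization}. One must check that our data match that source's conventions: positive-half-turn crossed braiding, $T_g$ in place of $g(-)$, and the braiding $\widetilde c$ of Definition~\ref{def:equivariantization} matching \cite[Definition~4.41]{DGNO}. Remark~\ref{rem:literatureconventions} records this dictionary. A reversed convention would only replace the braiding by its reverse, which has the same M\"uger center, so nondegeneracy is insensitive to that choice. The main obstacle is the neutral step. Huang's modularity is stated for the Huang--Lepowsky braiding, while $\cB$ is built from Xu's commutor. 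The bridge is exactly the identity \eqref{eq:neutralbraid} in Proposition~\ref{prop:neutralcomparison}, resting on Theorem~\ref{thm:neutralanalytic}. If one prefers to avoid citing the full criterion, I would fall back on the argument behind it. First, $\mathcal Z_2(\cB)$ maps under the forgetful functor into the subcategory generated by $\cC_1$, using faithfulness to exclude other degrees. Then the transparent objects there are the $\Rep(G)$-type objects $(\mathbf1^{\oplus n},\rho)$, which are killed by faithfulness via the de-equivariantization correspondence.
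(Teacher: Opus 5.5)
Your proposal is correct and follows essentially the same route as the paper. You obtain nondegeneracy of $\cC_1$ from the braided equivalence of Proposition~\ref{prop:neutralcomparison} with Huang's modular $\Rep(V)$, take faithfulness from Xu's Theorem~1.1, and conclude by \cite[Proposition~4.56(ii)]{DGNO}. One small correction to your optional fallback sketch: confining $\mathcal Z_2(\cB)$ to degree $1$ comes from the monodromy with the regular representation, as in Corollary~\ref{cor:E}, not from faithfulness; faithfulness is what removes the transparent $\Rep(G)$-type objects.
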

\begin{proof}
The strongly rational hypotheses imply the CFT-type and
self-contragredience conditions, complete reducibility, and
$C_2$-cofiniteness required by Huang's rigidity and modularity
theorem \cite{Huang}. Hence $\Rep(V)$ is a nondegenerate braided
fusion category. By Theorem~\ref{thm:neutralanalytic} and Xu's binary
$P(z)$-representing, product--iterate, unit, positive-half-turn
commutor, and parallel-transport identities
\cite[Theorem~4.3, Theorem~8.3, Proposition~8.5,
Proposition~9.6]{Crossed}, Proposition~\ref{prop:neutralcomparison}
gives a braided monoidal equivalence
$F:\cC_1\xrightarrow{\sim}\Rep(V)$. Such an equivalence preserves
double braidings and identifies the M\"uger centers, so
$\mathcal Z_2(\cC_1)\simeq_{\rm br}\mathcal Z_2(\Rep(V))
\simeq_{\rm br}\mathrm{Vec}$.

Xu's Theorem~1.1 supplies a braided $G$-crossed fusion category
$\cC_G$ in the sense of \cite[Definition~4.41]{DGNO}, for the finite
group $G$, with a faithful grading
$\cC_G=\bigoplus_{g\in G}\cC_g$. In particular, $\cC_g\ne0$
for every $g\in G$ is part of the theorem's faithful-grading
conclusion. The equivariantization constructed above is the braided fusion
category $\cB=\cC_G^G$. By \cite[Proposition~4.56(ii)]{DGNO},
this category is nondegenerate if and only if its neutral
component is nondegenerate and its $G$-grading is faithful.
Both conditions hold, and therefore
$\mathcal Z_2(\cB)\simeq_{\rm br}\mathrm{Vec}$.
\end{proof}

\begin{proof}[Proof of Theorem~\ref{thm:main}]
The first assertion is Theorem~\ref{thm:ribbon}.
For the second, the equivariantization $\cB=\cC_G^G$ is a fusion
category with the braiding and right duals constructed in
Definition~\ref{def:equivariantization}.  Its ordinary twist
\eqref{eq:thetab} satisfies balancing and right-dual compatibility,
so $\cB$ is a ribbon fusion category; this is the structure of
\cite[Theorem~3.9]{Kirillov}, with the action tensorators and
duality comparisons made explicit above.  Proposition~\ref{prop:nondeg}
gives $\mathcal Z_2(\cB)\simeq\mathrm{Vec}$.  Thus $\cB$ is a
nondegenerate ribbon fusion category, hence a modular tensor
category in the convention of Section~2.  For a ribbon fusion
category, this is equivalent to nonsingularity of its ordinary
$S$-matrix, the premodular criterion of \cite{DGNO}; thus it is
also modular in the ordinary sense used by Kirillov.

We apply \cite[Theorem~10.5]{Kirillov} to the $G$-equivariant fusion
category $\cC_G$ underlying Theorem~\ref{thm:ribbon}.  Its finite
semisimple rigid structure, crossed braiding, monoidal $G$-action,
and equivariant double-dual identification
$\delta:\id\xrightarrow{\sim}(-)^{\vee\vee}$ supply the data
required in \cite[Definition~2.1]{Kirillov}; the equivariantization
$\cB$ agrees with Kirillov's orbifold category $\cC_G/G$.
In the notation of \cite[Section~7]{Kirillov}, the corresponding
extended Verlinde space has components
\[
 \widetilde{\mathcal V}_{k,g}
 =\bigoplus_{[X]\in\Irr(\cC_g)}
     \Hom_{\cC_G}(X,T_kX).
\]
Since $T_kX\in\cC_{kgk^{-1}}$, a summand vanishes unless
$kg=gk$; even for commuting $k,g$, it vanishes unless
$T_kX\simeq X$.  Thus the nonzero components have precisely
Kirillov's commuting-pair grading.  With his unnormalized,
categorical-trace definition \cite[(9.3)]{Kirillov}, the operator
acts as
$\widetilde S:\widetilde{\mathcal V}_{k,g}\to
\widetilde{\mathcal V}_{g^{-1},k}$: his $R_k$ is our $T_k$,
and the canonical action and right-dual comparisons are those
fixed in Sections~2 and~5.  This is the extended operator specified
in Definition~\ref{def:crossedmodular}, not the ordinary
$S$-matrix of $\cB$ or a genus-one VOA trace transformation.
Kirillov's Theorem~10.5 states that his extended operator is
invertible if and only if this orbifold category is modular.
Applying its implication from the modularity of $\cB$ proves
invertibility of $\widetilde S$ on the entire extended Verlinde
space, and hence the asserted $G$-crossed modularity.
Finally, we compute the Frobenius--Perron dimensions.  By
Proposition~\ref{prop:neutralcomparison}, there is a braided strong
monoidal equivalence
\[
 F:(\cC_1,\bo_D,c^D,a^D)\xrightarrow{\sim}
   (\Rep(V),\bo_H,c^H,a^H).
\]
A tensor equivalence induces an isomorphism of Grothendieck rings,
identifies their simple-object classes, and carries the unique
positive Frobenius--Perron dimension character to the corresponding
character.  Consequently it preserves Frobenius--Perron dimensions.
Using also the convention from Section~2 that the dimension character
of $\cC_G$ restricts to the intrinsic one on its neutral fusion
subcategory, we obtain
\[
 \FPdim(\cC_1)
 =\sum_{[X]\in\Irr(\cC_1)}\FPdim_{\cC_G}(X)^2
 =\sum_{[M]\in\Irr(\Rep(V))}\FPdim_{\Rep(V)}(M)^2
 =\FPdim(\Rep(V)).
\]

The grading of $\cC_G$ is faithful by Xu's Theorem~1.1, and $G$
is finite.  By \cite[Proposition~8.20]{ENO}, in a fusion category faithfully
graded by a finite group, the Frobenius--Perron dimensions of all
homogeneous components are equal; these component dimensions are
computed using the FP-dimension character of the whole fusion
category.  Hence, for every $g\in G$,
$\FPdim(\cC_g)=\FPdim(\cC_1)=\FPdim(\Rep(V)).$
Here $\FPdim(\cC_g)$ is the sum of squared ambient
Frobenius--Perron dimensions of the simple objects in that sector,
as defined in Section~2.

Because the grading is an external direct-sum decomposition,
$\Hom_{\cC_G}(X_g,Y_h)=0$ for $g\ne h$, and every object is a finite
direct sum of homogeneous objects, each simple-object class belongs
to exactly one sector.  Thus
$\Irr(\cC_G)=\bigsqcup_{g\in G}\Irr(\cC_g)$, where the union retains
the external sector labels.  By the definition of the component
dimensions in Section~2,
\[
\begin{aligned}
 \FPdim(\cC_G)
 &=\sum_{[X]\in\Irr(\cC_G)}\FPdim_{\cC_G}(X)^2\\
 &=\sum_{g\in G}\FPdim(\cC_g)
 =|G|\FPdim(\cC_1)
 =|G|\FPdim(\Rep(V)).
\end{aligned}
\]

Finally, $T$ is an action of the finite group $G$ on the fusion
category $\cC_G$ by strong monoidal autoequivalences, with tensorators
$J^k$ and unit comparisons $j_k$ as fixed in Section~2.  The
underlying functors are strict, but these monoidal comparisons are
part of the action data.  The equivariantization formula in
\cite[Proposition~4.26]{DGNO} gives, for $\cB=\cC_G^G$,
\[
\begin{aligned}
 \FPdim(\cB)
 &=|G|\FPdim(\cC_G)\\
 &=|G|^2\FPdim(\Rep(V)).
\end{aligned}
\]
This completes the proof.
\end{proof}

\subsection{Dimensions and simple-object counts}\label{subsec:dimensionscounts}

\begin{corollary}\label{cor:E}
There is a canonical Tannakian subcategory $\mathcal E\simeq\Rep(G)$ of $\cB$. Writing $\mathcal E'$ for its M\"uger centralizer (the full subcategory of objects having trivial double braiding with every object of $\mathcal E$), one has
$\mathcal E'\simeq_{\rm br}\cC_1^G, \FPdim(\mathcal E')=|G|\FPdim(\Rep(V)).$
The twist on $\mathcal E$ is trivial.
\end{corollary}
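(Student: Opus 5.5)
The canonical Tannakian subcategory will be the image of $\Rep(G)$ under the functor sending a finite-dimensional representation $(\rho,W)$ to the equivariant object $(W\otimes\mathbf1,(\rho(k)\otimes j_k^{-1})_k)$. Its underlying object is $\mathbf1^{\oplus\dim W}\in\cC_1$. This is the standard embedding $\Rep(G)\hookrightarrow\cC^G$ of \cite[Section~4.4]{DGNO}. Full faithfulness follows from $\Hom(\mathbf1,\mathbf1)=\C$: equivariant morphisms between such objects are exactly $G$-intertwiners. Tensor compatibility follows from the cocycle $j_{kl}=T_k(j_l)j_k$ and the unit object $(\mathbf1,(j_k^{-1}))$ of Definition~\ref{def:equivariantization}.

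Next I will show that $\mathcal E$ is symmetric with trivial twist, hence Tannakian. For $A,B\in\mathcal E$ the homogeneous summands lie in degree $1$. The braiding formula $\widetilde c_{A,B}|_{X\bo Y}=(u_1^B\bo\id)c_{X,Y}$ therefore reduces to $c_{\mathbf1,\mathbf1}$ on copies of the unit, and this is the identity by the crossed unit identity $r_Yc_{\mathbf1,Y}=l_Y$, as already used in Proposition~\ref{prop:pivotal}. Then \eqref{eq:thetab} gives
\[
\Theta_A=u_1^A\theta_{\mathbf1^{\oplus n}}=\id,
\]
because $u_1^A=\id$ and $\theta_V=e^{2\pi iL_V(0)}=\id$ (Lemma~\ref{lem:theta}). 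So the twist on $\mathcal E$ is trivial, and the fiber functor is the forgetful functor to $\mathrm{Vec}$.

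For the centralizer, I would invoke \cite[Proposition~4.56(i)]{DGNO}: in the equivariantization of a braided $G$-crossed category, the centralizer of $\Rep(G)$ is the equivariantization of the neutral component, $\mathcal E'=\cC_1^G$. To verify this directly, take $A\in\cB$ and $E\in\mathcal E$ with underlying object $\mathbf1\otimes W$. On $A_g\bo E$, the double braiding is computed from the identity $\widetilde c_{E,A}\widetilde c_{A,E}|_{X\bo Y}=(u^A_q\bo u^E_g)c_{T_gY,X}c_{X,Y}$ established earlier. Since $Y=\mathbf1$, the crossed commutors reduce via the unit identities involving $j_g$, so the double braiding is $\rho(g)$ tensored with the identity. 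This is trivial for all $\rho$ iff $A_g=0$ for $g\ne1$. Hence $\mathcal E'$ consists of equivariant objects supported in $\cC_1$, braided equivalent to $\cC_1^G$ with the restricted braiding.

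The hardest step is the careful unit bookkeeping: confirming that $c_{X,\mathbf1}$ and $c_{\mathbf1,X}$ produce exactly the factor $j_g^{-1}$, and that it combines with $u^E_g=\rho(g)\otimes j_g^{-1}$ to give $\rho(g)$ and not $\id$. Otherwise the computation collapses and nothing is centralized away. I would do this with the unit identity $l_X(j_g^{-1}\bo\id)c_{X,\mathbf1}=r_X$.

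Finally, for the dimension, $\cC_1$ is a fusion category with a $G$-action, so \cite[Proposition~4.26]{DGNO} gives
\[
\FPdim(\cC_1^G)=|G|\,\FPdim(\cC_1)=|G|\,\FPdim(\Rep(V)),
\]
using $\FPdim(\cC_1)=\FPdim(\Rep(V))$ from the proof of Theorem~\ref{thm:main}. As a consistency check, nondegeneracy of $\cB$ (Proposition~\ref{prop:nondeg}) gives $\FPdim(\mathcal E)\FPdim(\mathcal E')=\FPdim(\cB)=|G|^2\FPdim(\Rep(V))$, which agrees with this value.
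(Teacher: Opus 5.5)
Your proposal is correct and follows essentially the same route as the paper. Both use the embedding $W\mapsto(W\otimes\mathbf1,(\rho(k)\otimes j_k^{-1})_k)$, reduce the braiding and twist on degree-$1$ objects to the unit constraints and $\theta_V=\id$, obtain the monodromy $\id_{A_g}\bo\rho(g)$ by cancelling $j_g^{-1}$ via the crossed unit identity $l_X(j_g^{-1}\bo\id_X)c_{X,\mathbf1}=r_X$, and apply \cite[Proposition~4.26]{DGNO} for the dimension; the only step the paper spells out more explicitly is that concluding $A_g=0$ for $g\neq1$ uses a faithful representation such as the left regular one, since $\rho_{\mathrm{reg}}(g)\neq\id$ for $g\neq1$.
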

\begin{proof}
For a finite-dimensional representation $\rho:G\to\operatorname{GL}(W)$, let
$I(W)=W\otimes\mathbf1$ denote the corresponding finite direct sum of copies of the tensor unit. Since $T_k$ is $\mathbb C$-linear and additive, there is a canonical direct-sum identification
$\iota_{k,W}:T_k I(W)\xrightarrow{\sim}W\otimes T_k\mathbf1.$
Give $I(W)$ the equivariant structure
\[
 u_k^{I(W)}=(\rho(k)\otimes j_k^{-1})\,\iota_{k,W}:
 T_k I(W)\longrightarrow I(W).
\]
This has $u_1^{I(W)}=\id$ and satisfies the cocycle: under the displayed identifications, the equality
$u_k^{I(W)}T_k(u_l^{I(W)})=u_{kl}^{I(W)}$ is exactly
$\rho(k)\rho(l)=\rho(kl)$ together with
$j_{kl}^{-1}=j_k^{-1}T_k(j_l^{-1})$. Thus $W\mapsto (I(W),(u_k^{I(W)}))$ defines a functor
$\mathcal I:\Rep(G)\to\cB$.

For representations $W,U$, the canonical tensor constraint
$\phi_{W,U}:I(W)\bo I(U)\xrightarrow{\sim}I(W\otimes U)$
is induced by the associators and unitors. It is equivariant because
\[
 \phi_{W,U}(u_k^{I(W)}\bo u_k^{I(U)})(J^k_{I(W),I(U)})^{-1}
 =u_k^{I(W\otimes U)}T_k(\phi_{W,U});
\]
this is precisely the monoidal unit coherence for $J^k$ and $j_k$, with
$\rho_{W\otimes U}(k)=\rho_W(k)\otimes\rho_U(k)$. In particular, the inverse tensorator in the equivariant tensor product is retained. The trivial representation maps to the equivariant tensor unit $(\mathbf1,(j_k^{-1})_k)$, so $\mathcal I$ is strong monoidal. Moreover,
\[
 \Hom_{\cB}(I(W),I(U))
 \cong\{f\in\Hom_{\mathbb C}(W,U):
 f\rho_W(k)=\rho_U(k)f\text{ for all }k\in G\}
 =\Hom_G(W,U),
\]
where we used $\operatorname{End}_{\cC}(\mathbf1)=\mathbb C$ and the equivariant-morphism condition. Hence $\mathcal I$ is fully faithful.

Both $I(W)$ and $I(U)$ are supported in degree $1$. The equivariantized braiding formula and the crossed unit identities therefore identify $\widetilde c_{I(W),I(U)}$ with the ordinary flip $W\otimes U\to U\otimes W$. Thus $\mathcal I$ is braided and its image $\mathcal E$ is symmetric, with the canonical symmetric tensor equivalence $\Rep(G)\simeq\mathcal E$; in particular, $\mathcal E$ is Tannakian. This canonical embedding is relative to the fixed action $(T_k,J^k,j_k)$ and the equivariantization in Definition~\ref{def:equivariantization}.

The twist on this subcategory is trivial. Indeed, for $E=I(W)$ the underlying object is supported in degree $1$, so 
$\Theta_E=u_1^E\theta_{I(W)}=\theta_{I(W)}=\id_E.$
Here $u_1^E=\id$, and $\theta_{I(W)}=\id$ follows by additivity and naturality from $\theta_{\mathbf1}=j_1=\id$ in Lemma~\ref{lem:theta}. Thus the crossed twist $\theta_{I(W)}:I(W)\to T_1I(W)$ is identified with the ordinary equivariantized twist $\Theta_E:E\to E$ only through $T_1=\id$ and the displayed equivariant structure.

We next compute the monodromy. Let $A=(\underline A,(u_k^A))\in\cB$, with $\underline A=\bigoplus_{g\in G}A_g$, and let $X=A_g$ be homogeneous. By the equivariantized braiding formula,
\[
 \widetilde c_{A,E_\rho}|_{X\bo I(W)}
 =(u_g^{E_\rho}\bo\id_X)c_{X,I(W)},\qquad
 \widetilde c_{E_\rho,A}|_{I(W)\bo X}
 =(u_1^A\bo\id)c_{I(W),X}=c_{I(W),X}.
\]
The factor $j_g^{-1}$ in $u_g^{E_\rho}=(\rho(g)\otimes j_g^{-1})\iota_{g,W}$ cancels the unit comparison in $c_{X,\mathbf1}$ by the crossed unit identity
$l_X(j_g^{-1}\bo\id_X)c_{X,\mathbf1}=r_X$. Consequently, after the canonical unit identifications, the composite is
\begin{equation*}
 \left.\widetilde c_{E_\rho,A}\widetilde c_{A,E_\rho}\right|_{X\bo I(W)}
 =\id_X\bo\rho(g).
\end{equation*}
In particular, the action is $\rho(g)$, not $\rho(g^{-1})$; the order of the two braidings and the convention $c_{X,Y}:X\bo Y\to T_gY\bo X$ determine this direction.

For a general $A$, this monodromy is the block-diagonal direct sum
$\bigoplus_{g\in G}(\id_{A_g}\bo\rho(g))$ under the canonical decomposition of $\underline A\bo I(W)$. Hence distinct degrees cannot cancel each other. If $A\in\mathcal E'$, choose the left regular representation $\rho_{\mathrm{reg}}$ of $G$. For every $g\ne1$, $\rho_{\mathrm{reg}}(g)e_1=e_g\ne e_1$, so $\rho_{\mathrm{reg}}(g)\ne\id$; the displayed monodromy can be the identity only if $A_g=0$. Conversely, if $A$ is supported in degree $1$, then $\rho(1)=\id$ for every representation and the monodromy with every object of $\mathcal E$ is the identity. Since $T_k\cC_1=\cC_1$, the equivariant objects supported in degree $1$ are exactly $\cC_1^G$. Thus, as full braided subcategories of the fixed equivariantization,
$\mathcal E'=\cC_1^G,$
which gives the canonical braided equivalence in the statement. This is the direct monodromy description of 
\cite[Proposition~4.56(i)]{DGNO}. The centralizer identity follows in the finite-group
equivariantization with its braided $G$-crossed structure.

Finally, $\cC_1$ is a fusion category, and each $T_k$ restricts to a strong monoidal autoequivalence of $\cC_1$: it preserves degree $1$, with tensorator $J^k_{X,Y}:T_kX\bo T_kY\to T_k(X\bo Y)$ and unit comparison $j_k:\mathbf1\to T_k\mathbf1$. These data, including their coherence, define the action used in $\cC_1^G$. Since $G$ is finite, 
\cite[Proposition~4.26]{DGNO} applies and gives
$\FPdim(\cC_1^G)=|G|\FPdim(\cC_1).$
The braided strong monoidal equivalence of Proposition~\ref{prop:neutralcomparison} preserves Frobenius--Perron dimensions, so
$\FPdim(\cC_1)=\FPdim(\Rep(V))$. Combining these identities with $\mathcal E'=\cC_1^G$ yields
$\FPdim(\mathcal E')=|G|\FPdim(\Rep(V)),$
as required.
\end{proof}

\begin{corollary}\label{cor:simplecount}
For every $g\in G$, the number of irreducible $g$-twisted modules is
\begin{equation}\label{eq:simplecount}
 |\Irr(\cC_g)|=
 \bigl|\{[M]\in\Irr(\Rep(V)):[T_gM]=[M]\}\bigr|.
\end{equation}
In particular, every twisted sector contains an irreducible object.
\end{corollary}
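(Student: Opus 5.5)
We derive \eqref{eq:simplecount} from the standard count of simple objects in a homogeneous component of a nondegenerate $G$-crossed braided fusion category, using the modularity already established. Since $\cB=\cC_G^G$ is nondegenerate (Proposition~\ref{prop:nondeg}) and the grading is faithful, the crossed category $\cC_G$ is a $G$-crossed extension of its neutral component $\cC_1$. For such extensions, \cite[Proposition~4.56]{DGNO} and its standard consequences give, for each $g\in G$, an identification of the rank of $\cC_g$ with the number of $T_g$-invariant simple objects of $\cC_1$:
\[
 \operatorname{rank}(\cC_g)=\bigl|\{[M]\in\Irr(\cC_1):[T_gM]=[M]\}\bigr|.
\]
The statement is then completed in two steps. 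First, transport this count to $\Rep(V)$ through the identity-on-objects equivalence $F$ of Proposition~\ref{prop:neutralcomparison}. The pullback $T_g$ on $\cC_1$ is literally the pullback of untwisted modules $Y_{T_gM}(v,z)=Y_M(g^{-1}v,z)$, and $F$ is the identity on modules and morphisms. Hence $[T_gM]=[M]$ has the same meaning on both sides. Second, nonemptiness follows immediately, since $V$ itself is $g$-stable through $j_g$. Alternatively, it follows from faithfulness.

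To establish the rank identity itself, I would use Kirillov's extended Verlinde algebra, because this is where the invertibility of $\widetilde S$ from Theorem~\ref{thm:main}(ii) enters. Take the commuting pair $(k,g)=(g,1)$ and the pair $(1,g)$. By definition,
\[
 \dim\widetilde{\mathcal V}_{g,1}=\bigl|\{[M]\in\Irr(\cC_1):T_gM\simeq M\}\bigr|.
\]
This holds because each summand $\Hom(M,T_gM)$ is at most one-dimensional for simple $M$. Similarly,
\[
 \dim\widetilde{\mathcal V}_{1,g}=|\Irr(\cC_g)|,
\]
because $T_1=\id$ gives $\Hom(X,X)=\C$. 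Kirillov's $\widetilde S$ maps $\widetilde{\mathcal V}_{k,g}$ to $\widetilde{\mathcal V}_{g^{-1},k}$. Thus $\widetilde S$ sends $\widetilde{\mathcal V}_{1,g}$ to $\widetilde{\mathcal V}_{g^{-1},1}$. Because $\widetilde S$ is invertible and preserves this bigrading, it restricts to an isomorphism between these components, so their dimensions agree. To replace $g^{-1}$ by $g$, note that $T_{g^{-1}}M\simeq M$ if and only if $T_gM\simeq M$. This is seen by applying $T_g$.

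The main obstacle is bookkeeping the index direction and checking that invertibility on the whole space gives a componentwise isomorphism. I expect this to follow because $\widetilde S$ is a direct sum of maps between the indicated graded pieces, but I would verify against \cite[(9.3)]{Kirillov} that no component is sent into several targets. I would also check that $\widetilde{\mathcal V}_{k,g}$ uses isomorphism classes of $T_k$-fixed simples with one-dimensional Hom spaces, which Schur's lemma guarantees.
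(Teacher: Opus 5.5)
Your proposal is correct and matches the paper's proof: you compare $\dim\widetilde{\mathcal V}_{1,g}=|\Irr(\cC_g)|$ with $\dim\widetilde{\mathcal V}_{g^{-1},1}$ through the invertible $\widetilde S$, swap $T_{g^{-1}}$-fixed for $T_g$-fixed classes, transport through the identity-on-modules equivalence $F$, and get nonemptiness from faithfulness or the vacuum module. Two minor points: you should state explicitly that $(k,g)\mapsto(g^{-1},k)$ is a bijection of index pairs, since that is what makes each block of the invertible $\widetilde S$ an isomorphism; and your preliminary appeal to \cite[Proposition~4.56]{DGNO} does not give the rank identity and is unnecessary here.
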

\begin{proof}
By Theorem~\ref{thm:main}(ii), the extended $\widetilde S$-operator
of Definition~\ref{def:crossedmodular} is invertible. This is
Kirillov's modularity condition for the underlying $G$-equivariant
fusion category, which has finitely many simple isomorphism classes
because Xu's Theorem~1.1 gives a fusion category. Kirillov's
fixed-point lemma \cite[Lemma~10.7]{Kirillov} therefore applies. We spell out
the graded-space calculation, in particular the inverse in the
second index transformation.

With the same indexing as in Kirillov's definition and extended-$S$
formula \cite[Equation~(9.3)]{Kirillov}, our extended Verlinde space is the finite direct sum of the pieces
\[
 \widetilde{\mathcal V}_{k,h}
 =\bigoplus_{[X]\in\Irr(\cC_h)}
       \Hom_{\cC_G}(X,T_kX),
 \qquad
 \widetilde S(\widetilde{\mathcal V}_{k,h})
       \subseteq \widetilde{\mathcal V}_{h^{-1},k}.
\]
The index transformation $(k,h)\mapsto(h^{-1},k)$ is a bijection
(and preserves the support on commuting pairs). Since $\widetilde S$
is invertible on the whole extended Verlinde space and each source
piece maps into its uniquely assigned target piece, its restriction
is an isomorphism onto that target piece: the inverse operator maps
each target piece back to its unique source piece. In particular,
\[
 \widetilde S\big|_{\widetilde{\mathcal V}_{1,g}}:
 \widetilde{\mathcal V}_{1,g}
 \xrightarrow{\ \sim\ }
 \widetilde{\mathcal V}_{g^{-1},1}.
\]
Thus $\widetilde S$ sends $\widetilde{\mathcal V}_{1,g}$ to
$\widetilde{\mathcal V}_{g^{-1},1}$, with the sector label inverted.

Since $T_1=\id$, the definition gives
\[
\begin{aligned}
 \dim\widetilde{\mathcal V}_{1,g}
 &=\sum_{[X]\in\Irr(\cC_g)}
       \dim\Hom_{\cC_G}(X,X)\\
 &=|\Irr(\cC_g)|.
\end{aligned}
\]
Here each $X$ is simple in the finite semisimple category $\cC_G$,
so $\operatorname{End}_{\cC_G}(X)=\mathbb C$; the direct sum is indexed by simple
isomorphism classes, and no Hom-spaces between distinct simples enter
this definition.

For the target piece, $T_{g^{-1}}$ preserves the neutral component
and sends simple objects to simple objects. Hence
\[
\begin{aligned}
 \dim\widetilde{\mathcal V}_{g^{-1},1}
 &=\sum_{[M]\in\Irr(\cC_1)}
       \dim\Hom_{\cC_G}(M,T_{g^{-1}}M)\\
 &=\bigl|\{[M]\in\Irr(\cC_1):
             [T_{g^{-1}}M]=[M]\}\bigr|.
\end{aligned}
\]
Indeed, the summand is zero when $M$ and $T_{g^{-1}}M$ are not
isomorphic, and is one-dimensional when they are isomorphic, by
simplicity and Schur's lemma.

The fixed isomorphism classes for $T_g$ and $T_{g^{-1}}$ coincide.
If $T_gM\simeq M$, applying $T_{g^{-1}}$ and using the underlying
action identity $T_{g^{-1}}T_g=T_1=\id$ gives
$T_{g^{-1}}M\simeq M$. Conversely, applying $T_g$ to an isomorphism
$T_{g^{-1}}M\simeq M$ gives $T_gM\simeq M$. If the action identities
are expressed using their canonical natural isomorphisms, the same
argument gives the identical statement on isomorphism classes.
Consequently,
\[
 \bigl|\{[M]\in\Irr(\cC_1):[T_{g^{-1}}M]=[M]\}\bigr|
 =\bigl|\{[M]\in\Irr(\cC_1):[T_gM]=[M]\}\bigr|.
\]
This inverse-action identification reconciles the stated $g$-fixed
count with the actual target of Equation~(9.3).

Finally, Proposition~\ref{prop:neutralcomparison} gives a tensor
equivalence $F:\cC_1\xrightarrow{\sim}\Rep(V)$ whose underlying
functor is the identity on modules and module morphisms. It therefore
induces a bijection on simple isomorphism classes. The action $T_g$
on the neutral sector is the pullback action
$Y_{T_gM}(v,z)=Y_M(g^{-1}v,z)$, which is the same pullback action on
ordinary $V$-modules under this identity-on-modules comparison.
Thus the bijection induced by $F$ intertwines the actions on
isomorphism classes, and it identifies the two fixed-point counts.
Combining the block isomorphism and the dimension calculations proves
\eqref{eq:simplecount}.

The final nonemptiness assertion also follows directly from the
faithful grading in Xu's Theorem~1.1: every $\cC_g$ is nonzero, and
a nonzero object in a finite semisimple graded component
has a simple constituent in that component. Independently, the
right-hand fixed-point set is nonempty because the vacuum module is
preserved up to isomorphism by every pullback $T_g$. \end{proof}

\begin{corollary}\label{cor:hol}
If $V$ is additionally holomorphic (so $V$ is its only irreducible untwisted module), $\cC_G$ is equivalent, as a faithfully $G$-graded fusion category, to $\mathrm{Vec}_G^\omega$ for some $[\omega]\in H^3(G,\C^\times)$. Here $\mathrm{Vec}_G^\omega$ denotes finite-dimensional $G$-graded vector spaces with associator twisted by a normalized $3$-cocycle $\omega$. Its equivariantization is a modular category of Frobenius--Perron dimension $|G|^2$.
\end{corollary}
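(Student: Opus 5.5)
I would first identify the neutral component and then count sectors. If $V$ is holomorphic, $\Rep(V)$ has the single simple object $V$, so $\FPdim(\Rep(V))=1$. By Proposition~\ref{prop:neutralcomparison}, $\cC_1\simeq\Rep(V)\simeq\mathrm{Vec}$. For each $g\in G$, Corollary~\ref{cor:simplecount} then gives $|\Irr(\cC_g)|=1$, since the only $g$-fixed class is $[V]$. Write $M_g$ for this unique simple object. Theorem~\ref{thm:main}(iii) gives $\FPdim(\cC_g)=1$, which forces $\FPdim_{\cC_G}(M_g)=1$. So every simple object of $\cC_G$ is invertible.

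Next I would show that the fusion rules are those of $G$. Since $M_g\bo M_h$ lies in $\cC_{gh}$ and is invertible, it is isomorphic to $M_{gh}$. An invertible object of FP dimension $1$ is simple, and $\cC_{gh}$ has a unique simple, so this is automatic. Thus $\cC_G$ is a pointed fusion category whose group of invertibles is identified with $G$ via $g\mapsto M_g$. This identification is compatible with the grading: $\deg M_g=g$, and the tensor product respects degrees.

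I would then apply the standard classification of pointed fusion categories, e.g.\ \cite[Section~2.3]{EGNO}. A pointed fusion category with group of simples $G$ is monoidally equivalent to $\mathrm{Vec}_G^\omega$ for a normalized $3$-cocycle $\omega$, with class $[\omega]\in H^3(G,\C^\times)$ determined by the associator on the chosen $M_g$. Concretely, I would fix isomorphisms $M_g\bo M_h\cong M_{gh}$; the associator $a_{M_g,M_h,M_k}$ then becomes a scalar $\omega(g,h,k)$. The pentagon makes $\omega$ a $3$-cocycle, and changing the chosen isomorphisms changes $\omega$ by a coboundary. Normalization follows from choosing the unit isomorphisms compatibly with $l$ and $r$. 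The resulting equivalence sends the degree-$g$ component to the degree-$g$ component, so it is an equivalence of faithfully $G$-graded fusion categories. Faithfulness is Xu's Theorem~1.1, or equally the count above.

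Finally, Theorem~\ref{thm:main}(ii) shows that the equivariantization is modular, and Theorem~\ref{thm:main}(iii) gives its dimension $|G|^2\cdot 1=|G|^2$.

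The only step that needs care is the first: obtaining $\FPdim_{\cC_G}(M_g)=1$ in the ambient category. Here I rely on the convention of Section~2, under which $\FPdim(\cC_g)$ is defined through ambient dimensions, together with \cite[Proposition~8.20]{ENO}. The rest is the standard argument for pointed categories. I would be explicit that the statement asserts an equivalence only as graded fusion categories, with no claim about the braiding or the $G$-action.
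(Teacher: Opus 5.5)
Your proof is correct. From the point where every simple object is known to be invertible, it is the paper's argument: the fusion rules $M_g\bo M_h\cong M_{gh}$, the scalar $\omega(g,h,k)$ extracted from the associator, the pentagon and coboundary checks, the graded equivalence, and the appeal to Theorem~\ref{thm:main}(ii),(iii). The real difference is how you get exactly one simple object per sector.

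You import the count from Corollary~\ref{cor:simplecount}, which rests on invertibility of Kirillov's $\widetilde S$-operator and his fixed-point lemma. You then use $\FPdim(\cC_g)=1$ only to force $\FPdim_{\cC_G}(M_g)=1$.

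The paper uses no modular input at this step. It combines $\FPdim(\cC_g)=1$ (via \cite[Proposition~8.20]{ENO}) with the bound $\FPdim_{\cC_G}(X)^2=\FPdim_{\cC_G}(X\bo X^\vee)\geq1$ for simple $X$, which holds because $\mathbf1$ occurs in $X\bo X^\vee$. This gives uniqueness and dimension one at the same time. The paper then proves invertibility explicitly: $X\bo X^\vee$ has FP dimension one and contains $\mathbf1$, hence is $\mathbf1$. You only assert that FP dimension one implies invertibility. That is standard, but it is the step worth writing out.

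So Corollary~\ref{cor:simplecount} is redundant in your argument once Theorem~\ref{thm:main}(iii) is available. The paper's route has the advantage of showing that $\cC_G\simeq\mathrm{Vec}_G^\omega$ depends only on the fusion structure, the faithful grading and $\cC_1\simeq\mathrm{Vec}$, not on the ribbon or modularity results. In fact the uniqueness step can bypass FP dimensions: for simple $X,Y\in\cC_g$, the nonzero object $X^\vee\bo Y$ lies in $\cC_1\simeq\mathrm{Vec}$. Hence $0\neq\Hom(\mathbf1,X^\vee\bo Y)\cong\Hom(X,Y)$, and so $Y\cong X$.
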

\begin{proof}
Since $V$ is holomorphic, $V$ is the only simple object of $\Rep(V)$. Under the standing strong-rationality hypotheses, $\Rep(V)$ is a fusion category with tensor unit $V$, so $\Rep(V)\simeq_{\otimes}\mathrm{Vec}$ and $\FPdim(\Rep(V))=1$. Proposition~\ref{prop:neutralcomparison} is a strong tensor equivalence; hence $\cC_1$ has the single simple isomorphism class $[\mathbf1]$ and $\FPdim(\cC_1)=1$.

By Xu's Theorem~1.1, $\cC_G$ is a fusion category faithfully graded by the finite group $G$. The ENO grading theorem \cite[Proposition~8.20]{ENO} therefore gives $\FPdim(\cC_g)=\FPdim(\cC_1)=1$ for every $g\in G$. As in Section~2, $\FPdim(\cC_g)$ is the sum of squares of the ambient dimensions $\FPdim_{\cC_G}(X)$ over the simple objects labeled by $\cC_g$.

Faithfulness implies that each $\cC_g$ is nonzero, and hence each finite semisimple homogeneous summand contains a simple object. If $X$ is simple, then $\mathbf1$ occurs in $X\otimes X^\vee$ with multiplicity $\dim\operatorname{Hom}(\mathbf1,X\otimes X^\vee)=\dim\operatorname{End}(X)=1$. Positivity and multiplicativity of Frobenius--Perron dimension imply
$\FPdim_{\cC_G}(X)^2=\FPdim_{\cC_G}(X\otimes X^\vee)\geq 1$.
Since
$\FPdim(\cC_g)=\sum_{[X]\in\Irr(\cC_g)}\FPdim_{\cC_G}(X)^2=1,$
there is exactly one simple isomorphism class in each $\cC_g$. Write its representative as $X_g$; then $\FPdim(X_g)=1$. The object $X_g\otimes X_g^\vee$ has FP dimension one and contains $\mathbf1$, so it is isomorphic to $\mathbf1$; similarly $X_g^\vee\otimes X_g\cong\mathbf1$. Thus each $X_g$ is invertible.

Choose $X_1=\mathbf1$. The grading places $X_g\otimes X_h$ in $\cC_{gh}$, and multiplicativity gives it FP dimension one. Since the only simple class in that component is represented by $X_{gh}$, whose FP dimension is one, the semisimple decomposition of $X_g\otimes X_h$ contains exactly one copy of $X_{gh}$. Thus $X_g\otimes X_h\cong X_{gh}$. In particular, $X_{g^{-1}}$ is a two-sided tensor inverse of $X_g$. Consequently the simple invertible isomorphism classes are indexed by $G$, and their tensor product corresponds to group multiplication.

For every $g,h\in G$, choose an isomorphism $\phi_{g,h}:X_g\otimes X_h\xrightarrow{\sim}X_{gh}$, taking $\phi_{1,g}$ and $\phi_{g,1}$ to be the canonical left and right unitors. Let $\alpha$ denote the associator of $\cC_G$. Define isomorphisms from $(X_g\otimes X_h)\otimes X_k$ to $X_{ghk}$ by
\[
\begin{aligned}
L_{g,h,k}&=\phi_{gh,k}\circ(\phi_{g,h}\otimes\id_{X_k}),\\
R_{g,h,k}&=\phi_{g,hk}\circ(\id_{X_g}\otimes\phi_{h,k})\circ\alpha_{X_g,X_h,X_k}.
\end{aligned}
\]
Because $X_{ghk}$ is simple over $\mathbb C$, there is a unique scalar $\omega(g,h,k)\in\mathbb C^\times$ such that $R_{g,h,k}=\omega(g,h,k)L_{g,h,k}$. The unit coherence identities for the associator and the chosen unitors imply that $\omega(g,h,k)=1$ whenever any one of $g,h,k$ is the identity. Thus $\omega$ is normalized.

The pentagon identity for $\alpha$ equates the two composites from $(((X_g\otimes X_h)\otimes X_k)\otimes X_\ell)$ to $X_g\otimes(X_h\otimes(X_k\otimes X_\ell))$. Substituting $R_{a,b,c}=\omega(a,b,c)L_{a,b,c}$ into those composites yields the $3$-cocycle identity
\[
\omega(h,k,\ell)\,\omega(g,hk,\ell)\,\omega(g,h,k)
=\omega(gh,k,\ell)\,\omega(g,h,k\ell).
\]
If the choices are changed to $\phi'_{g,h}=b(g,h)\phi_{g,h}$, with $b$ normalized to preserve the chosen unitors, then
\[
\omega'(g,h,k)=\omega(g,h,k)\,
\frac{b(h,k)b(g,hk)}{b(g,h)b(gh,k)}
=\omega(g,h,k)\,(\delta b)(g,h,k).
\]
Hence the cohomology class $[\omega]\in H^3(G,\mathbb C^\times)$ is independent of these choices.

With the convention that the associator of $\mathrm{Vec}_G^\omega$ on homogeneous degrees $(g,h,k)$ is multiplication by $\omega(g,h,k)$ from $((g\otimes h)\otimes k)$ to $g\otimes(h\otimes k)$, the chosen maps $\phi_{g,h}$ define a strong monoidal functor $\mathrm{Vec}_G^\omega\to\cC_G$ sending the one-dimensional simple object of degree $g$ to $X_g$. The identity $R_{g,h,k}=\omega(g,h,k)L_{g,h,k}$ is exactly its associativity coherence, and the chosen unitors give its unit coherence. The functor preserves the grading and is fully faithful and essentially surjective, since it gives a bijection between simple isomorphism classes and preserves their endomorphism spaces. It is therefore an equivalence of faithfully $G$-graded fusion categories. 

Finally, Theorem~\ref{thm:main}(ii) gives the modularity of the equivariantization $\cC_G^G$. By Theorem~\ref{thm:main}(iii),
$\FPdim(\cC_G^G)=|G|^2\FPdim(\Rep(V))=|G|^2.$
\end{proof}

\subsection*{Use of AI tools}
ChatGPT (OpenAI) was used in mathematical exploration, literature searching, preparation of preliminary drafts, proof checking, language editing, and \LaTeX{} typesetting. The author assumes full responsibility for the mathematical arguments, references, and final text.


\begin{thebibliography}{99}
\bibitem{ABD} T. Abe, G. Buhl and C. Dong,
\emph{Rationality, regularity, and $C_2$-cofiniteness},
Trans. Amer. Math. Soc. \textbf{356} (2004), 3391--3402,
\href{https://doi.org/10.1090/S0002-9947-03-03413-5}{doi:10.1090/S0002-9947-03-03413-5}.
\bibitem{DLM} C. Dong, H. Li and G. Mason,
\emph{Twisted representations of vertex operator algebras},
Math. Ann. \textbf{310} (1998), 571--600,
\href{https://arxiv.org/abs/q-alg/9509005}{arXiv:q-alg/9509005}.
\bibitem{DLMModular} C. Dong, H. Li and G. Mason,
\emph{Modular-invariance of trace functions in orbifold theory and generalized moonshine},
Comm. Math. Phys. \textbf{214} (2000), 1--56,
\href{https://doi.org/10.1007/s002200000242}{doi:10.1007/s002200000242}.
\bibitem{HuangProblems} Y.-Z. Huang,
\emph{Some open problems in mathematical two-dimensional conformal field theory},
Contemp. Math. \textbf{695} (2017), 123--138,
\href{https://doi.org/10.1090/conm/695/13999}{doi:10.1090/conm/695/13999}.
\bibitem{CarnahanMiyamoto} S. Carnahan and M. Miyamoto,
\emph{Regularity of fixed-point vertex operator subalgebras},
\href{https://arxiv.org/abs/1603.05645}{arXiv:1603.05645}.
\bibitem{McRae} R. McRae,
\emph{Twisted modules and $G$-equivariantization in logarithmic conformal field theory},
Comm. Math. Phys. \textbf{383} (2021), 1939--2019,
\href{https://arxiv.org/abs/1910.13226}{arXiv:1910.13226}.

\bibitem{Crossed} S. Xu, \emph{$G$-crossed braided tensor categories of twisted modules}, \href{https://arxiv.org/abs/2610.29130v1}{arXiv:2610.29130v1} [math.QA] (2026).
\bibitem{DH} J. Du and Y.-Z. Huang, \emph{Twisted intertwining operators and tensor products of (generalized) twisted modules}, SIGMA \textbf{22} (2026), 086, 64 pp., \href{https://doi.org/10.3842/SIGMA.2026.086}{doi:10.3842/SIGMA.2026.086}.
\bibitem{FHL} I. B. Frenkel, Y.-Z. Huang and J. Lepowsky, \emph{On axiomatic approaches to vertex operator algebras and modules}, Mem. Amer. Math. Soc. \textbf{104} (1993), no. 494, \href{https://doi.org/10.1090/memo/0494}{doi:10.1090/memo/0494}.
\bibitem{HuangYangAssoc} Y.-Z. Huang and J. Yang,
\emph{Associative algebras for (logarithmic) twisted modules for a vertex operator algebra},
Trans. Amer. Math. Soc. \textbf{371} (2019), 3747--3786,
\href{https://doi.org/10.1090/tran/7490}{doi:10.1090/tran/7490}.
\bibitem{YangTwAssoc} J. Yang,
\emph{On associative algebras, modules and twisted modules for vertex operator algebras},
J. Algebra \textbf{440} (2015), 354--378,
\href{https://doi.org/10.1016/j.jalgebra.2015.05.026}{doi:10.1016/j.jalgebra.2015.05.026}.
\bibitem{HL}
Y.-Z. Huang and J. Lepowsky, \emph{Tensor products of modules for a vertex operator algebra and vertex tensor categories}, in \emph{Lie Theory and Geometry}, Progr. Math. \textbf{123}, Birkh\"auser, 1994, 349--383.
\bibitem{HuangDE} Y.-Z. Huang, \emph{Differential equations and intertwining operators}, Commun. Contemp. Math. \textbf{7} (2005), 375--400, \href{https://doi.org/10.1142/S0219199705001799}{doi:10.1142/S0219199705001799}.
\bibitem{Huang} Y.-Z. Huang, \emph{Rigidity and modularity of vertex tensor categories}, Commun. Contemp. Math. \textbf{10} (2008), suppl. 1, 871--911, \href{https://doi.org/10.1142/S0219199708003083}{doi:10.1142/S0219199708003083}.
\bibitem{Kirillov} A. Kirillov, Jr., \emph{On $G$-equivariant modular categories}, \href{https://arxiv.org/abs/math/0401119}{arXiv:math/0401119}.
\bibitem{DGNO} V. Drinfeld, S. Gelaki, D. Nikshych and V. Ostrik, \emph{On braided fusion categories I}, Selecta Math. (N.S.) \textbf{16} (2010), 1--119, \href{https://doi.org/10.1007/s00029-010-0017-z}{doi:10.1007/s00029-010-0017-z}.
\bibitem{EGNO} P. Etingof, S. Gelaki, D. Nikshych and V. Ostrik,
\emph{Tensor Categories}, Math. Surveys Monogr. \textbf{205}, Amer. Math. Soc., 2015, Section~8.24.
\bibitem{ENO} P. Etingof, D. Nikshych and V. Ostrik, \emph{On fusion categories}, Ann. of Math. (2) \textbf{162} (2005), 581--642, \href{https://arxiv.org/abs/math/0203060}{arXiv:math/0203060}.
\end{thebibliography}
\end{document}